\newtheorem{thm}{Theorem}
\newtheorem{prop}{Proposition}[section]
\newtheorem{lem}[prop]{Lemma}
\newtheorem{cor}[prop]{Corollary}
\theoremstyle{definition}
\newtheorem{defi}[prop]{Definition}
\newtheorem{rem}[prop]{Remark}
\numberwithin{equation}{section}
\newcommand{\mc}{\mathcal}
\newcommand{\rr}{\mathbb{R}}
\newcommand{\nn}{\mathbb{N}}
\newcommand{\cc}{\mathbb{C}}
\newcommand{\zz}{\mathbb{Z}}
\newcommand{\la}{\lambda}
\newcommand{\eps}{\epsilon}
\newcommand{\pl}{\partial}
\newcommand{\x}{\times}
\newcommand{\til}{\widetilde}
\newcommand{\bbar}{\overline}
\newcommand{\cjd}{\rangle}
\newcommand{\cjg}{\langle}
\newcommand{\demi}{\tfrac{1}{2}}
\newcommand{\energy}{E}
\newcommand{\length}{L}
\DeclareMathOperator{\supp}{supp}
\DeclareMathOperator{\Vol}{Vol}
\newcommand{\N}{\mathbb{N}}
\newcommand{\Z}{\mathbb{Z}}
\newcommand{\R}{\mathbb{R}}
\newcommand{\nnabla}{\nabla\!}
\newcommand{\cN}{\mathcal{N}}
\newcommand{\nablav}{\nabla^{\mathrm{v}}}
\newcommand{\nablah}{\nabla^{\mathrm{h}}}
\begin{document}
\title{Boundary and lens rigidity for non-convex manifolds}
\author{Colin Guillarmou}
\address{CNRS, Universit\'e Paris-Sud, D\'epartement de Math\'ematiques, 91400
Orsay, France}
\email{cguillar@math.cnrs.fr}

\author{Marco Mazzucchelli}
\address{CNRS, \'Ecole Normale Sup\'erieure de Lyon, UMPA, 69364 Lyon Cedex 07, France}
\email{marco.mazzucchelli@ens-lyon.fr}

\author{Leo Tzou}
\address{School of Mathematics and Statistics, University of Sydney, Sydney, Australia}
\email{leo@maths.usyd.edu.au}

\date{November 28, 2017}
\subjclass[2000]{53C22, 58E10}
\keywords{X-ray transform, boundary rigidity, geodesics}

\begin{abstract}
We study the boundary and lens rigidity problems on domains without assuming the convexity of the boundary. We show that such rigidities hold when the domain is a simply connected compact Riemannian surface without conjugate points. For the more general class of non-trapping compact Riemannian surfaces with no conjugate points, we show lens rigidity. We also prove the injectivity of the $X$-ray transform on tensors in a variety of settings with non-convex boundary and, in some situations, allowing a non-empty trapped set.
\end{abstract}

\maketitle

\section{Introduction}

We consider a compact Riemannian manifold with boundary $(M,g)$, and study the geometric inverse problems consisting in the determination of its geometry from boundary measurements (throughout the paper, $M$ will be tacitly assumed to be connected, unlike its boundary $\pl M$). As boundary data, we will employ the \emph{boundary distance function}
\begin{equation}\label{betag}
\beta_g:=d_g|_{\pl M\x \pl M},
\end{equation} 
where $d_g:M\times M\to[0,\infty)$ is the Riemannian distance, and the \emph{lens data}   
\[ \tau^+_g: \pl SM\to [0,\infty], \qquad \sigma_g: \pl SM\setminus \Gamma_-\to \pl SM.\] 
Here, $SM$ denotes the unit tangent bundle, $\Gamma_-:=\{y\in \pl SM\ |\ \tau_g^+(y)=+\infty\}$, the \emph{exit time} $\tau^+_g(x,v)$ is 
 the maximal non-negative time of existence of the geodesic $\gamma_{x,v}(t)=\exp_x(tv)$, and the \emph{scattering map}  $\sigma_g(x,v):=(\gamma_{x,v}(\tau_g^+(x,v)),\dot{\gamma}_{x,v}(\tau_g^+(x,v)))$
gives the exit position and ``angle'' of $\gamma_{x,v}$.
When $\tau^+_g$ is everywhere finite, $(M,g)$ is said to be \emph{non-trapping}. The \emph{boundary rigidity problem} asks whether the boundary distance $\beta_g$ determine $(M,g)$ up to diffeomorphisms fixing $\pl M$. Analogously, the \emph{lens rigidity problem} asks whether the lens data $(\tau_g^+,\sigma_g)$  determine $(M,g)$ up to diffeomorphisms fixing $\pl M$. For \emph{simple} Riemannian manifolds, that is, compact Riemannian balls with strictly convex boundary and without conjugate points, these two rigidity problems are equivalent, since the boundary distance and the lens data can be easily recovered from each another. Without the convexity assumption on $\partial M$, this equivalence becomes unclear, for the length minimizing curves are not necessarily geodesics anymore. 

Our first theorem provides a new boundary rigidity result.
\begin{thm}\label{th1}
Let $M$ be a simply connected compact surface with boundary. If $g_1$ and $g_2$ are two Riemannian metrics on $M$ without conjugate points such that 
 $\beta_{g_1}=\beta_{g_2}$, then there is a diffeomorphism $\psi:M\to M$  such that $\psi|_{\pl M}={\rm Id}$ and $\psi^*g_2=g_1$.
\end{thm}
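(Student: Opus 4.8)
The plan is to derive Theorem~\ref{th1} from the lens rigidity of non-trapping surfaces without conjugate points (established in the present paper) by showing that, on a simply connected surface without conjugate points, the boundary distance function alone determines the lens data.

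The geometric heart of the argument is the following claim: if $(M,h)$ is a simply connected compact surface without conjugate points, then $(M,h)$ is non-trapping and every maximal geodesic segment $\gamma\colon[0,T]\to M$ with $\gamma(0),\gamma(T)\in\pl M$ is the \emph{unique} $h$-minimizing curve in $M$ between its endpoints; in particular $d_h(\gamma(0),\gamma(T))=T$ and $\Gamma_-=\emptyset$. To prove it, extend $(M,h)$ across $\pl M$ to a slightly larger simply connected surface $(\til M,\til h)$ still without conjugate points (attach a collar with sufficiently negative curvature), so that $\gamma$ remains a geodesic of $\til h$. On $\til M$ the exponential maps are everywhere nonsingular, and since $\til M$ is simply connected the standard theory of surfaces without conjugate points shows that two points of $\til M$ are joined by a unique geodesic, which is globally minimizing. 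As $\gamma$ stays in $M\subset\til M$, it is therefore the unique minimizer in $M$ as well; applying the same reasoning to bi-infinite geodesics excludes a nonempty trapped set.

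Granting this, fix $i\in\{1,2\}$ and let $(x,v)\in\pl SM$ point into $M$. By the claim the geodesic $t\mapsto\gamma_{x,v}(t)$, $t\in[0,\tau^+_{g_i}(x,v)]$, is the minimizer between $x$ and its exit point, so $\tau^+_{g_i}(x,v)=d_{g_i}(x,\gamma_{x,v}(\tau^+_{g_i}(x,v)))$; the exit point and the exit direction $\dot\gamma_{x,v}(\tau^+_{g_i}(x,v))$ are then recovered from the first two derivatives of $\beta_{g_i}$ near this pair of boundary points, using that between nearby boundary points the minimizer is precisely this geodesic and depends smoothly on the endpoints away from the glancing set. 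Thus $\tau^+_{g_i}$ and $\sigma_{g_i}$ are functionals of $\beta_{g_i}$, and the induced boundary metric and boundary jet of $g_i$ are likewise recovered from $\beta_{g_i}$ near the diagonal; hence $\beta_{g_1}=\beta_{g_2}$ forces $(\tau^+_{g_1},\sigma_{g_1})=(\tau^+_{g_2},\sigma_{g_2})$ together with equality of the boundary jets.

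We are now in position to invoke the lens rigidity theorem for non-trapping compact surfaces without conjugate points: since $(M,g_1)$ and $(M,g_2)$ are such surfaces with identical lens data, there is a diffeomorphism $\psi\colon M\to M$ with $\psi|_{\pl M}=\mathrm{Id}$ and $\psi^*g_2=g_1$, as required. I expect the main obstacle to be the geometric claim of the second paragraph --- showing that geodesic segments of a simply connected surface without conjugate points minimize distance in $M$, with due care for geodesics tangent to the concave part of $\pl M$ and for the extension $(\til M,\til h)$ --- together with the smooth-dependence statement on minimizers needed to extract $\sigma_g$ from $\beta_g$ off the glancing set.
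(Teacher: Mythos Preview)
Your overall strategy---show that $\beta_g$ determines the lens data, then apply lens rigidity---is exactly the route the paper takes. The difficulty, as you anticipate, is the geometric claim in your second paragraph, and here there is a genuine gap.

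You propose to extend $(M,h)$ to a slightly larger simply connected surface $(\til M,\til h)$ without conjugate points and then invoke ``the standard theory of surfaces without conjugate points'' to conclude that any two points of $\til M$ are joined by a unique, minimizing geodesic. But that standard theory (Cartan--Hadamard and its refinements) requires \emph{completeness}, and $\til M$ is still a compact manifold with boundary. A small collar, even with very negative curvature, does not give a complete extension; and the paper explicitly remarks in the introduction that constructing a simple (in particular complete, convex) extension of such $(M,g)$ is likely impossible in general, and certainly a hard open problem when pairs of boundary points are nearly conjugate. So the step ``two points of $\til M$ are joined by a unique minimizing geodesic'' is unjustified, and this is precisely the content that must be proved.

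The paper fills this gap by working directly on $M$ (with only a thin collar extension for technical convenience) via Morse-theoretic arguments on the path space: Lemma~\ref{l:no_self_intersections} uses Birkhoff shortening and the Morse index theorem to exclude self-intersecting geodesics; Lemma~\ref{l:no_intersections} then excludes two distinct geodesics between the same endpoints; Lemma~\ref{l:scs_non_trapping} deduces non-trapping; and Proposition~\ref{p:unique_minimizer} shows by an iterative comparison argument that any geodesic is the unique length-minimizer. None of this appeals to a global Cartan--Hadamard statement. Your extraction of the lens data from $\beta_g$ is also too quick: when one or both endpoints are glancing, $\beta_g$ is not smooth there, and the paper needs a careful approximation argument (Lemma~\ref{l:approximating_first_hit}, then Lemmas~\ref{l:same_transverse_lens} and~\ref{l:positive_t}) to handle those points; but this is a secondary issue compared with the missing uniqueness proof.
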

The proof is carried out in two steps: we first prove that $(M,g_1)$ and $(M,g_2)$ are isometric provided they have the same lens data; then we show in Theorem~\ref{betadetS} that the boundary distance determines the lens data for a class of Riemannian metrics including those considered in Theorem~\ref{th1}. The proofs of these two facts turn out to be much more intricate than for simple Riemannian metrics due to the presence of glancing geodesics and to the discontinuities of the lens data. 
We remark that Theorem~\ref{betadetS} is likely false in higher dimension, due to the fact that there is an example due to Croke and Wen \cite{Croke:2015qy} of a simply connected 3-manifold  $(M,g)$ with boundary, without conjugate points and with a geodesic $\gamma$ of length $\ell>0$ with endpoints 
$x,x'\in\pl M$ that is not length minimizing, i.e.\ $\beta_{g}(x,x')<\ell$.

Given two compact Riemannian manifolds $(M_1,g_1)$ and $(M_2,g_2)$ with isometric boundaries, one can always find open neighborhoods $U_i\subset M_i$ of the boundaries $\partial M_i$ and a diffeomorphism $\phi:U_1\to U_2$ extending the isometry between $(\pl M_1,g_1|_{\pl M_1})$ and 
$(\pl M_2,g_2|_{\pl M_2})$ such that $\phi^*g_2|_{\pl M_1}=g_1|_{\pl M_1}$. We say that $(M_1,g_1)$ and $(M_2,g_2)$ agree to order $k\in \nn$ at the boundary when, for one such diffeomorphism $\phi$, the $k$-jets of $\phi^*g_2$ and $g_1$ coincide at all points of $\partial M_1$. We say that  $(M_1,g_1)$ and $(M_2,g_2)$ have the same scattering map if $d\phi^{-1}\circ \sigma_{g_2}\circ d\phi|_{\partial SM_1}= \sigma_{g_1}$, and that they have the same lens data if they further satisfy $\tau^+_{g_2}\circ d\phi|_{\partial SM_1}=\tau^+_{g_1}$. Our second theorem is a scattering rigidity result for non-trapping Riemannian surfaces. 
\begin{thm}\label{th2}
Let $(M_1,g_1)$ and $(M_2,g_2)$ be two non-trapping, oriented compact Riemannian surfaces with boundary, without conjugate points, that agree to order $2$ at the boundary.
If they have the same scattering map, then there exists a diffeomorphism $\psi:M_1\to M_2$ extending the isometry between $(\pl M_1,g_1|_{\pl M_1})$ and 
$(\pl M_2,g_2|_{\pl M_2})$ such that $\psi^*g_2=e^{\rho}g_1$ for some $\rho\in C^\infty(M_1)$ with $\rho|_{\partial M_1}\equiv0$.
\end{thm}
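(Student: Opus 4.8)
The plan is to convert the equality of scattering maps into the vanishing of a geodesic $X$-ray transform on $(M_1,g_1)$ and then to invoke injectivity of the tensorial $X$-ray transform on non-trapping surfaces without conjugate points; two-dimensionality is what makes the leftover, undetermined information be precisely a conformal factor, since the scattering map records outgoing points and directions but not travel times. \emph{First, reduction to a common surface.} Using the collar diffeomorphism $\phi$ furnished by the order-$2$ agreement at the boundary, I identify $\partial SM_1$ with $\partial SM_2$ and a collar of $\partial M_1$ with a collar of $\partial M_2$. Since the manifolds are non-trapping, $\Gamma_-=\emptyset$: every unit vector returns to $\partial SM$ in finite time and every interior point of $M_i$ lies on a $g_i$-geodesic with both endpoints on $\partial M_i$, so the scattering map controls \emph{all} geodesics. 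The coincidence of $\sigma_{g_1}$ and $\sigma_{g_2}$ then assigns to each $g_1$-geodesic joining two boundary points a $g_2$-geodesic with the same endpoints and end-directions; since no travel times are recorded, this is a correspondence of geodesics as unparametrised curves, i.e.\ an orbit equivalence of the two flows away from the grazing set, equal to $d\phi$ near $\partial SM$. Establishing from this that $M_1\cong M_2$ (the topology being encoded in the combinatorics of $\sigma$) and that the correspondence extends to a genuine diffeomorphism of the surfaces is part of the work of this step; from here on I work on a single oriented surface $M$ carrying two metrics, still written $g_1,g_2$, agreeing to order $2$ along $\partial M$ and with a common scattering map.

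The genuinely new difficulty, and the step I expect to be the main obstacle, is controlling geodesics that graze $\partial M$. The orbit-equivalence correspondence of the previous paragraph is a priori defined and regular only away from the grazing set $\mathcal{G}=\{(x,v)\in\partial SM:\ v\in T_x\partial M\}$, across which $\sigma$ is discontinuous; near $\mathcal{G}$, families of geodesics skim along the concave parts of $\partial M$, leave, and re-enter $M$, so the comparison map is initially defined only on an open dense subset of $SM$. The remedy is the hypothesis that $g_1$ and $g_2$ agree to order $2$ along $\partial M$: the leading behaviour of a geodesic entering near $\mathcal{G}$ — whether it grazes, and the length of the arc it spends in a fixed collar before re-entering or exiting — is governed by the second fundamental form of $\partial M$ and its first normal derivative, hence by the $2$-jet of the metric at $\partial M$. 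Matching these $2$-jets, one shows the two ``grazing flows'' are conjugate near $\mathcal{G}$, so that the comparison map, and the associated transport-equation identities, extend across $\mathcal{G}$ with enough regularity for the Pestov-type energy estimates to be valid on all of $M$ in spite of the non-convex boundary. Showing that this upgrade can be done from the $2$-jet agreement alone, and that the grazing geodesics carry no information beyond a conformal factor, is the technical heart of the proof.

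\emph{Next, the conformal reduction in two dimensions.} With $g_1$ and $g_2$ on the same oriented surface and the geodesic flows orbit-equivalent relative to $\partial SM$, I pass to isothermal coordinates for $g_1$ and exploit the complex/conformal structure (this is where orientation is used, and where holomorphic-integrating-factor techniques enter, as in the $X$-ray transform results of this paper). Using the transport equation along $g_1$-geodesics together with the fact that the scattering map — unlike the travel time — records how the outgoing endpoint and direction vary, one shows that the equality of scattering maps forces the geodesic $X$-ray transform (along $g_1$-geodesics) of the trace-free part of $g_2-g_1$, in a solenoidal gauge, to vanish; here the order-$2$ vanishing of $g_2-g_1$ at $\partial M$ is used to kill the boundary contributions. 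By the injectivity of the tensorial $X$-ray transform on non-trapping surfaces without conjugate points and with non-convex boundary — which is among the results of this paper — that trace-free part is the symmetrised covariant derivative of a $1$-form vanishing on $\partial M$, i.e.\ a pure gauge term. Absorbing it into a diffeomorphism produces the map $\psi\colon M_1\to M_2$, still extending the boundary isometry, with $\psi^*g_2=e^{\rho}g_1$; and since $\psi$ restricts to the boundary isometry, $e^{\rho}=1$ on $\partial M$, i.e.\ $\rho|_{\partial M_1}\equiv 0$.

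Finally, a word on why the conclusion is only up to a conformal factor. The scattering map constrains the trace-free part of the difference tensor but not its trace part: the latter is the quantity read off from the lengths of geodesics (the exit times $\tau^+_{g_i}$, which are \emph{not} part of the scattering data), exactly as in the tensor-tomography dictionary where travel-time perturbations correspond to the full $X$-ray transform of $f_{ij}\dot\gamma^i\dot\gamma^j$ and scattering-relation perturbations to its solenoidal part. Thus scattering data alone cannot be expected to yield an isometry, and the conformal factor $\rho$ above is the honest residual; recovering it would require the additional knowledge of $\tau^+_{g_i}$, i.e.\ the full lens data, which is the mechanism behind the stronger conclusions of Theorem~\ref{th1}.
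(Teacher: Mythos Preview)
Your proposal has a genuine gap at its central step. You assert that equality of scattering maps forces the $g_1$-geodesic X-ray transform of the trace-free part of $g_2-g_1$ (in solenoidal gauge) to vanish, but this is a \emph{linearized} statement: it is what one obtains by differentiating a one-parameter family of metrics with fixed scattering map, not what follows from two arbitrary metrics sharing~$\sigma$. Without a deformation from $g_1$ to $g_2$ through metrics with the same scattering data---which is not given---there is no mechanism producing $I_2(\cdot)=0$ for the difference tensor, and the orbit-equivalence you sketch does not supply one: even granting it, passing from an orbit conjugacy of flows on $SM$ to a base diffeomorphism pulling $g_2$ back to a conformal multiple of $g_1$ is itself a hard nonlinear step, not a consequence of tensor tomography. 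The Stefanov--Uhlmann style argument you have in mind needs either a smallness assumption or full lens data to close.

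The paper's proof is entirely different and never compares $g_1$ and $g_2$ through their difference tensor. Following Pestov--Uhlmann, one shows that the two metrics determine the same set of boundary traces of holomorphic functions, and then invokes Belishev's theorem that this set determines $(M,[g])$ up to conformal diffeomorphism fixing~$\pl M$. Concretely: given $f,f^*\in C^\infty(\pl M)$ with $\mc{P}_1(f)-i\mc{P}_1(f^*)$ holomorphic for $g_1$, one must show the same for $g_2$. The key analytic input is \emph{surjectivity of $I_0^*$} (Proposition~\ref{Pinjective}): there exists a smooth $g_1$-flow-invariant $w'$ on $SM_1$ with fibrewise average $\mc{P}_1(f)$. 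The commutator identity $H_{\rm od}Xw-XH_{\rm ev}w=X_\perp w_0$ (Proposition~\ref{reducPU}) converts holomorphicity into an identity $(\sigma_{g_1}^*-{\rm Id})(H_{\rm ev}w')=(\sigma_{g_1}^*-{\rm Id})(\pi_0^*f^*)$ on $\pl SM$. Since $\sigma_{g_1}=\sigma_{g_2}$ and $w'|_{\pl SM}$ is shared, one transports $w'$ along $g_2$-orbits to a Lipschitz $g_2$-invariant $w''$ (here the order-$2$ agreement is used only to glue $g_2$ to $g_1$ across a common collar so that the extended $g_2$-flow is $C^1$), and runs Proposition~\ref{reducPU} backwards---now invoking injectivity of $I_1$ for $g_2$ (Remark~\ref{lowregI1})---to conclude $w''_0-i\mc{P}_2(f^*)$ is holomorphic. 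Since $w''_0|_{\pl M}=f$, this gives $\mc{P}_2(f)-i\mc{P}_2(f^*)$ holomorphic.

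Your instinct that the glancing set is the main obstacle is correct, but the resolution is not a matching of grazing trajectories via the $2$-jet. The glancing difficulty is absorbed earlier, into the proof of surjectivity of $I_0^*$: because the naive normal operator $I_0^*I_0$ is no longer pseudodifferential when $\pl M$ is non-convex, the paper builds a substitute $P=P_1+P_2+P_3$ by microlocalising the X-ray transform near and away from the glancing flowout (Lemmas~\ref{P1PDO} and~\ref{P2PDO}) so that $P$ is elliptic of order $-1$ on a closed extension, hence Fredholm; injectivity of $I_0$ then gives invertibility of $P$ and produces the required invariant function~$w'$. Once that surjectivity is in hand, the proof of Theorem~\ref{th2} itself is short.
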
 
In \cite{Stefanov:2009lp}, Stefanov and Uhlmann proved that  two non-trapping Riemannian metrics without conjugate points and with the same lens data agree to infinite order at the boundary. Therefore Theorem \ref{th2} shows that such metrics are conformally equivalent.

We emphasize that both Theorem \ref{th1} and  \ref{th2} are not a  consequence of 
known results for simple Riemannian manifolds. Indeed, there are manifolds $(M,g)$ satisfying the assumptions of Theorem \ref{th2} but that cannot be embedded isometrically in a simple Riemannian manifold: for example, cutting open just on the right of the unique closed geodesic in a negatively curved cylinder with convex boundary and keeping the right connected component, we obtain a non-trapping cylinder with negative curvature whose boundary has one concave component and one convex component (see Figure~\ref{f:cylinder}). There are self-intersecting geodesics in this cylinder, thus it cannot be embedded isometrically in some simple Riemannian manifold. In the setting of Theorem \ref{th1}, if the manifold $(M,g)$ is a domain of a simple manifold $(\til{M},\til{g})$, then the boundary distance $\beta_g$ is not directly related to the boundary distance $\beta_{\til{g}}$, and Theorem \ref{betadetS} would still be necessary to prove Theorem \ref{th1}. 
It also seems unlikely that for all $(M,g)$ as in Theorem \ref{th1} there is a simple 
extension $(\til{M},\til{g})$, and, even if that were the case, it seems to us a difficult problem to build such an extension when there are pair of points in $\pl M$ that are close to being conjugate points.

\begin{figure}
\begin{center}
\begin{footnotesize}
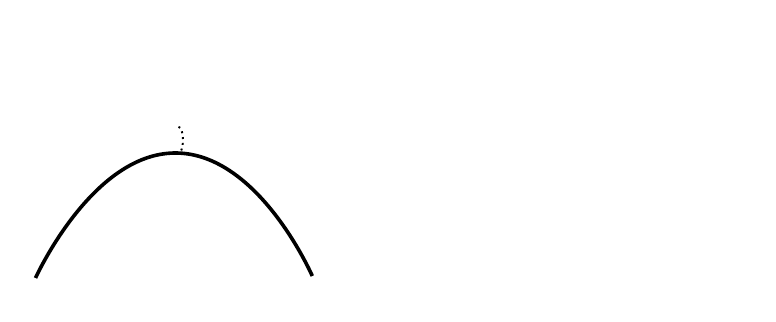 
\end{footnotesize}
\caption{\textbf{(a)} A negatively curved cylinder with convex boundary. \textbf{(b)} 
A compact subcylinder with a concave boundary component. The picture shows a self-intersecting geodesic.}
\label{f:cylinder}
\end{center}
\end{figure}%

In his Ph.D.\ thesis \cite{Zhou:2011wq}, Zhou showed how to derive the following lens rigidity result from Theorem~\ref{th2} together with a result of Croke \cite[Theorem 1.2]{Croke:2005bh}. We will sketch its proof in Section~\ref{ss:final}.
\begin{thm}\label{th2bis}
Let $(M_1,g_1)$ and $(M_2,g_2)$ be two  non-trapping, oriented compact Riemannian surfaces with boundary, without conjugate points, and with the same lens data. Then there exists a diffeomorphism $\psi:M_1\to M_2$ such that $\psi^*g_2=g_1$.
\end{thm}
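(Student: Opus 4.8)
The plan is to deduce the theorem from Theorem~\ref{th2}, from the boundary determination result of Stefanov and Uhlmann \cite{Stefanov:2009lp}, and from a conformal lens rigidity theorem of Croke \cite[Theorem~1.2]{Croke:2005bh}. Since $(M_1,g_1)$ and $(M_2,g_2)$ have the same lens data, they a fortiori have the same scattering map; moreover, being non-trapping and without conjugate points, by \cite{Stefanov:2009lp} they agree to infinite order at the boundary, in particular to order $2$. The hypotheses of Theorem~\ref{th2} are thus satisfied, and we obtain a diffeomorphism $\psi:M_1\to M_2$ extending the boundary isometry, together with $\rho\in C^\infty(M_1)$ with $\rho|_{\partial M_1}\equiv 0$, such that $\psi^*g_2=e^{\rho}g_1$.

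The next step is to transfer the data to $M_1$ by setting $\hat g:=\psi^*g_2=e^{\rho}g_1$. Being isometric to $(M_2,g_2)$, the surface $(M_1,\hat g)$ is non-trapping, oriented, and without conjugate points. Since $\rho$ vanishes on $\partial M_1$, the metrics $\hat g$ and $g_1$ coincide on $T_xM_1$ for every $x\in\partial M_1$; hence $\partial S_{\hat g}M_1=\partial S_{g_1}M_1$, and $d\psi_x:(T_xM_1,g_1)\to(T_{\psi(x)}M_2,g_2)$ is a linear isometry extending the fixed boundary isometry, so it induces on $\partial S_{g_1}M_1$ the same map as the diffeomorphism $\phi$ used in the definition of ``same lens data''. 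Because $\psi$ is a $(\hat g,g_2)$-isometry, it conjugates the geodesic flows, exit times, and scattering maps; combining this with the hypothesis that $g_1$ and $g_2$ have the same lens data gives $\tau^+_{\hat g}=\tau^+_{g_1}$ and $\sigma_{\hat g}=\sigma_{g_1}$ on $\partial SM_1$. In other words, $g_1$ and $e^{\rho}g_1$ are two metrics in a common conformal class on $M_1$ with the same lens data.

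The final step is to apply Croke's theorem \cite[Theorem~1.2]{Croke:2005bh}: within a conformal class (and under the standing assumptions of non-trapping and absence of conjugate points, which hold here), the lens data determines the metric. This forces $e^{\rho}\equiv 1$, i.e.\ $\rho\equiv 0$, and therefore $\psi^*g_2=g_1$, which is the assertion of the theorem.

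The essential difficulty is packaged entirely in Theorem~\ref{th2}, and hence, upstream, in the lens rigidity half of Theorem~\ref{th1}; the only work in the present argument is of a bookkeeping nature, namely verifying that the lens data of $g_2$ genuinely transports along $\psi$ to the lens data of $\hat g$ on $M_1$ — which is where the normalization $\rho|_{\partial M_1}\equiv0$ is crucial, as it makes $\partial S_{\hat g}M_1=\partial S_{g_1}M_1$ — and checking that Croke's conformal rigidity theorem applies in our non-convex and possibly non-simply-connected setting.
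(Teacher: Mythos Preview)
Your reduction via Theorem~\ref{th2} and \cite{Stefanov:2009lp} to the conformal situation $\psi^*g_2=e^{\rho}g_1$ with $\rho|_{\partial M_1}\equiv0$ is correct and matches the paper. The gap is in the final step: you invoke \cite[Theorem~1.2]{Croke:2005bh} as a ``conformal lens rigidity theorem'', but that is not what it says. Croke's Theorem~1.2 there is a \emph{covering} result: if a finite Riemannian cover is lens rigid, then so is the base. It does not assert that two conformal metrics with the same lens data must coincide, and no such blanket statement is available in the non-simply-connected, non-convex setting you are in.

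The paper (following Zhou) uses Croke's theorem in precisely that covering sense. One first passes to a finite cover $M_1'\to M_1$ in which every non-contractible loop has $g_1'$-length exceeding $2\max\tau_{g_1}^+$. On the cover one applies Theorem~\ref{th2} to reduce to $g_2'=e^{2\rho}g_1'$, and then runs Croke's volume/H\"older argument (as in the proof of Theorem~\ref{th1}); the point of the cover is to guarantee the length comparison
\[
\int_0^{\tau_{g_2'}^+(x,v)} \|\dot\gamma_{2,x,v}(t)\|_{g_1'}\,dt \;\geq\; \tau_{g_1'}^+(x,v),
\]
which in the simply connected case came from Proposition~\ref{p:unique_minimizer} but here follows because the $g_1'$- and $g_2'$-geodesics with the same endpoints are either homotopic (so Proposition~\ref{p:unique_minimizer} applies on the universal cover) or form a non-contractible loop of $g_1'$-length at least $2\tau$. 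This forces $\rho\equiv0$ on the cover, i.e.\ the cover is lens rigid, and only \emph{then} does \cite[Theorem~1.2]{Croke:2005bh} descend the conclusion to $M_1$. Your proposal skips the covering and the length comparison entirely; without them the conformal step is unjustified.
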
 
Theorem~\ref{th2bis}, together with a recent result of Croke and Wen \cite{Croke:2015qy}, also implies a new scattering rigidity result for surfaces, see Corollary~\ref{c:croke}. 

The linearization of the lens rigidity problem, which is strongly employed in the proofs of Theorems \ref{th1} and \ref{th2}, consists in studying the injectivity of the X-ray transform on symmetric $2$-tensors on $M$, or on functions on $M$ if the metric is varied within a conformal class. We denote by $\otimes_S^m T^*M$ the vector bundle of symmetric covariant $m$-tensors on $M$. The X-ray transform on such tensor fields is the operator $I_m: C^0(M;\otimes_S^m T^*M)\to L^\infty_{\rm loc}(\pl SM\setminus \Gamma_-)$ defined by
\begin{equation}
\label{e:I_m}
I_mf(x,v):= \int_0^{\tau^+_g(x,v)}f_{\gamma_{x,v}(t)}(\big.\dot\gamma_{x,v}(t),...,\dot\gamma_{x,v}(t))dt,
\end{equation}
A straightforward computation shows that $I_m(Dp)=0$ for each $p\in C^1(M;\otimes_S^{m-1}T^*M)$ with $p|_{\pl M}=0$, where $D$ denotes the symmetrized covariant derivative.  Our third result is the following.
\begin{thm}\label{th3}
Let $(M,g)$ be an oriented compact Riemannian surface with boundary that is non-trapping and without conjugate points. Let $f\in C^1(M; \otimes_S^mT^*M)\cap\ker(I_m)$ for some $m\geq 0$.  If $m=0$, then $f=0$. If $m\geq 1$, there exists $p\in C^2(M; \otimes_S^{m-1}T^*M)$ such that $p|_{\pl M}\equiv0$ and $f=Dp$.
\end{thm}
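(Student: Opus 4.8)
The plan is to adapt the now-classical elliptic-transport/Pestov-identity approach to X-ray rigidity (as developed by Paternain--Salo--Uhlmann and others in the simple case) to the non-convex, possibly concave-boundary setting. The key new difficulty is that the boundary is not strictly convex, so the exit-time function $\tau_g^+$ is only lower semicontinuous and the lens data are discontinuous along the glancing set $\partial_0 SM$; thus the natural function $u$ on $SM$ defined by integrating $f$ along the geodesic flow will not be smooth up to the boundary, and one cannot blithely integrate by parts.

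Here is the sequence of steps I would carry out. \textbf{Step 1: reduction to first order.} As usual, write $f=\sum_{k} f_k$ in terms of the vertical Fourier decomposition on the fibres of $SM$ (each $f_k$ a degree-$\le m$ spherical-harmonic piece, so only finitely many $k$ with $|k|\le m$). Let $u(x,v):=-\int_0^{\tau_g^+(x,v)} \widetilde f(\varphi_t(x,v))\,dt$ where $\widetilde f(x,v)=f_x(v,\dots,v)$ and $\varphi_t$ is the geodesic flow; since $(M,g)$ is non-trapping, $u$ is well defined a.e. and satisfies the transport equation $Xu=\widetilde f$ on $SM$, with $u|_{\Gamma_+}=0$ where $\Gamma_+$ is the outgoing boundary (by the hypothesis $I_mf=0$). \textbf{Step 2: regularity of $u$.} Prove that $u\in C^\infty(SM\setminus\partial_0 SM)$ and, crucially, that $u$ (together with enough derivatives) is bounded and has the right limiting behaviour near the glancing set — this is the place where ``no conjugate points'' enters, to control Jacobi fields along geodesics that are nearly tangent to $\partial M$, and where one must be careful because $\tau_g^+$ blows up near $\Gamma_-$. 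One should either use the fact that the trapped set is measure zero (non-trapping here makes it empty) so that the glancing set has measure zero and the singularities of $u$ are mild enough to be integrable, or pass to a slightly larger manifold $(M_e,g_e)$ in which $\partial M$ becomes strictly convex after attaching a collar (a standard trick), extending $f$ by zero; the extension will not be smooth but the X-ray data are unchanged on geodesics meeting $M$, and one then works on $M_e$. \textbf{Step 3: Pestov identity / energy estimate.} Apply the Pestov (Santaló-type) energy identity to $u$ on $SM$. In the simple case this gives $\|\overset{\mathrm v}{\nabla} Xu\|^2 = \|X\overset{\mathrm v}{\nabla} u\|^2 + (\text{curvature term}) + (\text{boundary term})$, and the no-conjugate-points hypothesis makes the curvature-plus-index form nonnegative (via the Jacobi-field/index-form characterization of absence of conjugate points, as in Guillarmou's holomorphic-integrating-factor work or the Green-function approach). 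Here the boundary term must be shown to vanish or have a favourable sign despite the concavity — this is the crux. \textbf{Step 4: conclude.} The energy identity forces the ``wrong'' Fourier modes of $u$ to vanish: one deduces $u$ has only modes of degree $<m$ (indeed the argument shows $u$ is a polynomial of degree $m-1$ in $v$), say $u=\sum_{|k|\le m-1} u_k$. Then $Xu=\widetilde f$ together with the vertical-Fourier-mode structure of $X=X_++X_-$ (raising/lowering operators) lets one read off that $f=Dp$ with $p$ the tensor corresponding to the fibrewise-polynomial $u$, and $p|_{\partial M}=0$ because $u|_{\Gamma_+}=0$ and, by time-reversal symmetry of the problem, $u|_{\Gamma_-}=0$ as well, so $u$ vanishes on all of $\partial SM$ minus glancing, hence $p$ vanishes on $\partial M$. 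For $m=0$ the same chain gives $u$ constant and $Xu=f$ forces $f=0$; for $m\ge1$ one gains the claimed regularity $p\in C^2$ by elliptic regularity for the transport equation away from glancing plus a boundary-regularity argument.

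The main obstacle, as flagged above, is \textbf{Step 2--3}: controlling $u$ near the glancing/tangential set $\partial_0 SM$ where the flow-out of $M$ is not transverse to the boundary, so that the Pestov integration by parts is legitimate and the boundary contribution is of the right sign. In the strictly convex case this set is lower-dimensional and harmless; with concave boundary components one has families of geodesics that stay tangentially close to $\partial M$ for a long time, and one must quantify — using precisely the absence of conjugate points, which bounds the growth of Jacobi fields — that $u$ and $\overset{\mathrm v}{\nabla}u$ do not blow up too fast there. I expect this to require a careful analysis of the geometry near $\partial_0 SM$ (second fundamental form of $\partial M$, the ``grazing'' expansion of $\tau_g^+$), possibly combined with the concave-to-convex collar extension so that one can quote the clean Pestov estimate on the enlarged manifold and only afterwards restrict the conclusion back to $M$.
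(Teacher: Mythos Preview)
Your outline is close to the paper's argument for $m\le 1$, but for $m\ge 2$ it contains a genuine gap, and one of your suggested shortcuts does not work.

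\textbf{The gap in Step~4.} The assertion that ``the energy identity forces the wrong Fourier modes of $u$ to vanish'' is exactly what the Pestov identity delivers for $m=0$ and $m=1$, but it does \emph{not} follow for $m\ge 2$ from the no-conjugate-points hypothesis alone. With only no conjugate points, the Pestov/index-form positivity controls the quantity $\|X\nablav u\|^2-\langle\mc{R}\nablav u,\nablav u\rangle$, which is enough to kill $u$ when $f$ has fibre degree $\le 1$; for higher degree one needs an $\alpha$-controlled condition or non-positive curvature to absorb the extra terms (this is precisely why Theorem~\ref{th4}(iii) carries a curvature hypothesis). The paper's proof of Theorem~\ref{th3} for $m\ge 2$ takes a completely different route: an induction on $m$ using holomorphic integrating factors (Theorem~\ref{injImsurf}). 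At the inductive step one writes $f_m=\eta_+h_{m-1}+q_m$ with $q_m$ holomorphic, and kills $q_m$ by pairing against a flow-invariant function $\hat w$ with prescribed $m$-th Fourier mode. Producing such $\hat w$ requires the \emph{surjectivity} of $I_0^*$ and $I_1^*$ onto $C^\infty(M)$ and divergence-free $1$-forms (Propositions~\ref{Pinjective} and~\ref{surjI_1*}), and this surjectivity is the main technical novelty: since $I_m^*I_m$ is \emph{not} a pseudo-differential operator when $\partial M$ is not strictly convex, the paper builds a modified normal operator $P$ (localizing near and away from the glancing region via flow-box cutoffs) that is elliptic of order $-1$ and has the right Fredholm properties. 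None of this apparatus is present in your proposal.

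\textbf{The collar-extension shortcut fails.} You suggest attaching a collar so that $\partial M$ becomes strictly convex in a larger manifold $(M_e,g_e)$. One can indeed extend $(M,g)$ to a slightly larger manifold without conjugate points (Lemma~\ref{l:no_conjugate_points}), but there is no reason the new boundary can be made strictly convex while keeping the no-conjugate-points property; the paper explicitly points out (see the discussion after Theorem~\ref{th2}) examples that cannot be embedded in any simple manifold. So this reduction is not available, and one must genuinely confront the glancing set --- the paper does this via the localized X-ray transforms $I^{\chi_j}$ and the operator $P$, not by eliminating the concavity.
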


In a compact Riemannian manifold with boundary $(M,g)$, we define the trapped set $\mc{K}\subset SM$ to be the set of points $(x,v)\in SM$ such that the corresponding geodesic $\gamma_{x,v}(t)=\exp_x(tv)$ is defined for all $t\in\R$. Our last result is valid for compact manifolds of any dimension that are possibly trapping.

\begin{thm}\label{th4}
Let $(M,g)$ be a compact Riemannian manifold with boundary and with no conjugate points. If the trapped set $\mc{K}$ is a (possibly empty) hyperbolic set contained in the interior of $SM$, then:
\begin{itemize}
\item[(i)] If  $f\in C^1(M)\cap\ker(I_0)$, then $f=0$.

\item[(ii)] If $f\in C^2(M; T^*M)\cap\ker(I_1)$, then there exists $p\in C^3(M)$ such that  $p|_{\partial M}\equiv 0$ and  $f=dp$.

\item[(iii)] If $g$ is non-positively curved, $m\geq 2$, and $f\in C^\infty(M; \otimes^m_ST^*M)\cap\ker(I_m)$, then there exists $p\in C^\infty(M; \otimes_S^{m-1}T^*M)$ such that $p|_{\partial M}\equiv 0$ and $f=Dp$.
\end{itemize}
\end{thm}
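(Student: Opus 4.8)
The plan is to use the standard Pestov identity / energy-functional approach for the X-ray transform, combined with the hyperbolic dynamics on the trapped set and the no-conjugate-points hypothesis.

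First I would set up the problem on the unit tangent bundle $SM$. For $f\in C^k(M;\otimes_S^m T^*M)$ with $I_mf=0$, introduce the function $u$ on $SM\setminus\mc K$ defined by integrating $f$ along the geodesic flow: $u(x,v):=\int_0^{\tau_g^+(x,v)} f_{\gamma_{x,v}(t)}(\dot\gamma_{x,v}(t),\dots,\dot\gamma_{x,v}(t))\,dt$. By the vanishing of $I_mf$ this $u$ vanishes on the outgoing boundary $\pl_-SM:=\{(x,v)\in\pl SM: \langle v,\nu\rangle>0\}$ as well, so $u|_{\pl SM}=0$ in the sense that both the incoming trace equals $I_mf$ (which is $0$) and the outgoing trace is $0$. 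Then $u$ solves the transport equation $Xu=-\pi_m^*f$ on $SM\setminus\mc K$, where $X$ is the geodesic vector field and $\pi_m^*f(x,v)=f_x(v,\dots,v)$ is the lift of $f$ to a function on $SM$ fibrewise homogeneous of degree $m$. The key analytic point is that, because the trapped set $\mc K$ is hyperbolic and contained in the interior of $SM$, one can control the regularity of $u$: by Anosov-type estimates (uniform hyperbolicity gives exponential decay/contraction), $u$ extends to a function on $SM$ of Sobolev or Hölder class inherited — with some loss — from $f$; this is exactly where I expect the hypotheses on the differentiability order of $f$ in (i)–(iii) to come from.

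The second step is to run the Pestov (energy) identity. Writing $u$ in vertical Fourier modes $u=\sum_k u_k$ (for surfaces) or using the decomposition of $u$ under the fibre-rotation action (in higher dimension one uses the second fundamental form / Santaló formula version), the no-conjugate-points condition is equivalent to positivity of the relevant quadratic form (the index form along geodesics is positive on compactly supported variations), and this yields an a priori estimate of the form $\|\nabla^{\rm v} u\|^2 \lesssim \langle(\text{something involving }f),u\rangle$ plus boundary terms. Crucially the no-conjugate-points hypothesis replaces the usual curvature assumption needed to make the Pestov identity sign-definite; for the parts of the theorem where $m\ge2$ and we additionally assume non-positive curvature (item (iii)), one gets the stronger Pestov estimate directly from curvature and can iterate down the Fourier modes as in the work of Paternain–Salo–Uhlmann. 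The trapped set contributes a boundary-type term at $\mc K$ which vanishes because $u$ is a genuine function there (no flux escapes through a set of measure zero / the hyperbolicity kills the contribution), so the energy identity closes with only the actual boundary $\pl SM$ contributing, and that contribution vanishes since $u|_{\pl SM}=0$.

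The conclusion is then extracted mode by mode. For $m=0$ (items (i)) the transport equation is $Xu=-f$ with $u$ fibrewise constant in the leading mode; the energy identity forces $du$'s vertical part to vanish, so $u$ is constant on fibres, hence $u=u(x)$ and $f=-Xu=0$ after checking $u$ is $C^1$ and using $u|_{\pl M}=0$. For $m=1$ (item (ii)) one shows $u$ has only modes $0$ and $\pm1$, writes $u=p+\text{(degree one part)}$, and identifies $f=dp$ with $p|_{\pl M}=0$, gaining one derivative as stated. For $m\ge2$ with non-positive curvature (item (iii)) one uses the sharp Pestov estimate to show inductively that all Fourier modes of $u$ of order $\ge m$ vanish, so $u=\pi_{m-1}^*p$ for some symmetric $(m-1)$-tensor $p$, whence $f=Dp$; elliptic regularity and the boundary condition give $p\in C^\infty$ with $p|_{\pl M}=0$. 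I expect the \textbf{main obstacle} to be the first step: establishing the required regularity of $u$ up to and across the hyperbolic trapped set $\mc K$, i.e.\ the fact that a hyperbolic trapped set produces a solution $u$ that is regular enough (in the right function space) for the Pestov identity and the integration by parts to be justified — this is the technical heart where the hyperbolicity hypothesis and the differentiability thresholds in the statement are really used, and it relies on the anisotropic/microlocal analysis of the resolvent of the geodesic flow near $\mc K$ (Pollicott–Ruelle resonances) rather than on classical ODE arguments.
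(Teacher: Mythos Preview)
Your overall architecture matches the paper's: solve the transport equation $Xu=\pi_m^*f$, establish enough regularity of $u$ using the hyperbolicity of $\mc K$ (this is indeed the technical heart, carried out in the paper via a Livsic-type proposition based on the microlocal resolvent estimates of Dyatlov--Zworski and the first author), and then run the Pestov identity with the no-conjugate-points hypothesis supplying positivity of the index form along each non-trapped geodesic. You have correctly identified the main obstacle.

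There is, however, one genuine omission and one place where your endgame is muddled.

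\textbf{The omission.} Before the regularity result for $u$ can be invoked, one needs $f$ (or $f-Dp$ for a suitable $p$) to vanish on $\partial M$. The paper proves this as a separate boundary-determination lemma (a variant of Sharafutdinov's): if $I_mf=0$ and there are no conjugate points, then after subtracting a potential $Dp$ with $p|_{\partial M}=0$ one has $(f-Dp)|_{\partial M}=0$. Without this the Livsic proposition does not apply (it requires the normal traces of $f$ at $\partial M$ to vanish in order to propagate $H^k$ regularity across the glancing set), and the integration by parts in Pestov produces uncontrolled boundary terms. This step is not a formality: its proof uses the no-conjugate-points hypothesis through Jacobi-field estimates and a density argument for non-trapped non-glancing geodesics.

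\textbf{The endgame.} For (i), the paper does \emph{not} first show $\nabla^{\mathrm v}u=0$ and then deduce $f=0$; rather, Pestov plus nonnegativity of the index form on each non-trapped geodesic (via Santal\'o) directly forces $\|Xu\|_{L^2}^2=\|\pi_0^*f\|_{L^2}^2=0$. For (ii), your claim that $u$ has modes $0,\pm1$ is not what is proved: one shows the index form vanishes geodesic-by-geodesic, and since it is positive \emph{definite} on vector fields vanishing at the endpoints (this is precisely the no-conjugate-points statement), one concludes $\nabla^{\mathrm v}u=0$, i.e.\ $u=\pi_0^*q$ has only the zero mode, whence $f=dq$. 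In effect you have swapped the mechanisms of (i) and (ii).
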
 

The condition of hyperbolicity for the trapped set $\mc{K}$ will be recalled in Section \ref{hyperK}. It is worthwhile to mention that the trapped set $\mc{K}$ is hyperbolic and there are no conjugate points whenever $(M,g)$ has negative sectional curvature. In the non-trapping situation, i.e. $\mc{K}=\varnothing$, we can even define $I_m$ on $L^1(M;\otimes_S^mT^*M)$ and Theorem~\ref{th4}(i) then holds with $f\in L^1(M)$  (see Remark~\ref{assreg}), while for tensors Theorem~\ref{th3} holds for $f\in H^1(M; \otimes_S^mT^*M)$  (see Theorem~\ref{injImsurf}). As in \cite[Theorem 1]{Guillarmou:2017if}, Theorem \ref{th4} easily implies deformation rigidity.

To put our theorems into perspective, let us recall some related results. 
Under the additional assumption that $\pl M$ is strictly convex, Theorem \ref{th2} was proved by Pestov-Uhlmann in \cite{Pestov:2005jo}, while Theorem \ref{th1} follows from \cite{Pestov:2005jo} together with a result of Mukhometov \cite{Muhometov:1981vs}. Within the class of negatively curved Riemannian metrics on compact 2-dimensional disks, the boundary rigidity was proved by Croke \cite{Croke:1990tk} and Otal \cite{Otal:1990ko}; such metrics are necessarily non-trapping.
On compact Riemannian surfaces with strictly convex boundary, no conjugate points, and hyperbolic trapped set, Theorem \ref{th2} was recently proved by the first author in \cite{Guillarmou:2017if}. In dimension  $n\geq 3$, there are several boundary rigidity results in the literature: here, we mention the one of Stefanov-Uhlmann-Vasy \cite{Stefanov:2017lt} for non-positively curved metrics, and the one of Stefanov-Uhlmann \cite{Stefanov:2005sy} for analytic metrics. Both these results require the boundary to be strictly convex, and removing this assumption is in general a serious issue. The determination of the $C^\infty$ jet of the Riemannian metric $g$ at $\pl M$ from the lens data was proved by Stefanov-Ulmann \cite{Stefanov:2009lp} for non-trapping metrics with no conjugate points. However, Zhou \cite{Zhou:2012mo} showed  that, without these assumptions, there are examples where the $C^\infty$ jet cannot be determined. Burago-Ivanov \cite{Burago:2010rp} proved a boundary rigidity result without any convexity assumption on the boundary, but assuming that the manifold is a compact domain in the Euclidean space and the  metric is close to a flat one. 
For $m=0,1$ and $f\in C^\infty(M; \otimes_S^mT^*M)$, Theorem~\ref{th3} was proved by Dairbekov \cite{Dairbekov:2006uf}. Under the extra assumption that $\pl M$ is strictly convex, then $M$ is a ball and Theorem~\ref{th3} was proved by Mukhometov \cite{Muhometov:1981vs} for $m=0$, Anikonov-Romanov \cite{Anikonov:1997mi} for $m=1$, and 
Paternain-Salo-Uhlmann \cite{Paternain:2013fx} for $m\geq 2$.   
 Theorem~\ref{th4} was proved by Pestov-Sharafutdinov \cite{Pestov:1988zq}, for non positively curved   balls with strictly convex boundary, and by Stefanov-Uhlmann-Vasy \cite{Uhlmann:2016qt, Stefanov:2014jw} when  $m\leq 2$ and $(M,g)$ admits a strictly convex foliation; all these cases are non-trapping. For the trapping case,
Theorem~\ref{th4} was proved by the first author in \cite[Theorem~3]{Guillarmou:2017if} under the further strict convexity assumption on $\pl M$.

The main difficulty in our work is the analysis of the glancing and the trapped trajectories where the exit time $\tau_g^+$ has discontinuities. In the non-trapping case, we use the approach of \cite{Pestov:2005jo} to prove the non-linear results (Theorems \ref{th1} and \ref{th2}). The main step is to prove the surjectivity of the adjoint $I_0^*$. This is in general harder than proving the injectivity of $I_0$ and, unlike in \cite{Pestov:2005jo}, in our case we cannot employ the operator  $I_0^*I_0$, since it is not pseudo-differential anymore when $\pl M$ is not strictly convex. 
In order to deal with this issue, we introduce a modified normal operator $P$ that replaces $I_m^*I_m$ for $m=0,1$ and has good Fredholm properties. By means of $P$, we are able to show the surjectivity of $I_0^*$ and $I_1^*$. The construction of $P$ requires 
 microlocalization away and near the glancing.
 In order to deal with the trapping case, we employ some results from \cite{Dyatlov:2016zh, Guillarmou:2017if} and the isolating blocks method of Conley-Easton \cite{Conley:1971eu}. Finally, we analyse the relations between the  boundary distance function and the lens data for simply connected manifolds with no conjugate points.

\subsection{Acknowledgements} We are grateful to C. Croke, G. Paternain and G. Uhlmann for useful discussions. We particularly thank X. Zhou for allowing us to include his argument for deriving Theorem~\ref{th2bis} from Theorem~\ref{th2}. C. G. is partially supported by the ANR project ANR-13-JS01-0006, and by the ERC consolidator grant IPFLOW. M. M. is partially supported by the ANR COSPIN (ANR-13-JS01-0008-01).

\section{Dynamical and geometric preliminaries}

\subsection{Exit time functions}

Let $\mc{M}$ be a smooth compact Riemannian manifold with non-empty boundary. Hereafter, we denote its interior by $\mc{M}^\circ$. We consider a nowhere vanishing smooth vector field $X$, with flow $\varphi_t$. Without loss of generality, we can assume that $\mc{M}$ is embedded inside an auxiliary closed manifold $\cN$ of the same dimension, and that $X$ is defined on the whole $\cN$, so that its flow $\varphi_t$ is complete. We consider the \emph{forward exit time function} $\tau_{\mc{M}}^+:\mc{M}\to [0,+\infty]$ and the \emph{backward exit time function} $\tau^-_{\mc{M}}:\mc{M}\to [-\infty,0]$, given by 
\begin{equation}\label{exittime}
\tau_{\mc{M}}^{\pm}(y):=
\pm \sup\big\{ t\geq 0\ \big|\  \varphi_{\pm s}(y)\in \mc{M}\ \  \forall s\in[0,t]\big\}.
\end{equation}
The forward trapped set $\Gamma_-(\mc{M})$ and the backward trapped set $\Gamma_+(\mc{M})$ of the flow are given by
\begin{align*}
\Gamma_-(\mc{M}):= \big\{y\in \mc{M}\ \big|\ \tau^+_{\mc{M}}(y)=+\infty\big\}, \quad 
\Gamma_+(\mc{M}):= \big\{y\in\mc{M}\ \big|\ \tau^-_{\mc{M}}(y)=-\infty\big\}. 
\end{align*}
Their intersection, is the trapped set
\[ \mc{K}(\mc{M}):=\Gamma_+(\mc{M})\cap \Gamma_-(\mc{M})=\bigcap_{t\in\rr}\varphi_t(\mc{M}).\] 
When the ambient manifold $\mc{M}$ is clear from the context, we will omit it from the notation and simply write $\Gamma_\pm$ and $\mc{K}$. The sets $\Gamma_\pm$ and $\mc{K}$ are closed in $\mc{M}$, and $\mc{K}$ is invariant by the flow $\varphi_t$. Moreover, if we denote by $d$ any Riemannian distance on $\mc{N}$, 
\begin{equation}\label{disttoK} 
d(\varphi_t(y),\mc{K})\to 0 \textrm{ as }t\to \mp \infty, \qquad\forall y\in \Gamma_\pm,
\end{equation}
see, e.g., \cite[Lemma 2.3]{Dyatlov:2016zh}. In what follows, we will always assume that
\begin{equation}\label{H1}
 \mc{K} \subset \mc{M}^\circ.
\end{equation}
We say that the flow $\varphi_t$ is \emph{non-trapping} on $\mc{M}$ when $\mc{K}=\varnothing$, which implies $\Gamma_-=\Gamma_+=\varnothing$ as well. Notice that $\tau_{\mc{M}}^{+}(y)$ and $-\tau_{\mc{M}}^{-}(y)$ are upper-semicontinuous, and therefore, since $\mc{M}$ is compact, the non-trapping condition is equivalent to the fact that $\tau_{\mc{M}}^{+}$, or equivalently $\tau_{\mc{M}}^{-}$, is uniformly bounded. 

Later on, we will need the following stability property of trapped sets.
\begin{lem}\label{l:trapped}
Let $\mc{M}_n\subset \mc{N}$ for $n\geq 0$ be a sequence of compact subsets such that 
$\mc{M}_{n+1}\subset \mc{M}_n$ and $\bigcap_{n\in\nn} \mc{M}_n = \mc{M}$.
If $\mc{K}(\mc{M})\subset \mc{M}^\circ$, then $\mc{K}(\mc{M}_n)=\mc{K}(\mc{M})$ for all $n\in\nn$ large enough.
\end{lem}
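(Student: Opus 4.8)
The plan is to prove the two inclusions $\mc{K}(\mc{M}_n)\subseteq\mc{K}(\mc{M})$ and $\mc{K}(\mc{M})\subseteq\mc{K}(\mc{M}_n)$ separately, the second being the easy one. Since $\mc{M}\subset\mc{M}_n$ and $\mc{K}(\mc{M})=\bigcap_{t\in\rr}\varphi_t(\mc{M})\subseteq\bigcap_{t\in\rr}\varphi_t(\mc{M}_n)=\mc{K}(\mc{M}_n)$, the inclusion $\mc{K}(\mc{M})\subseteq\mc{K}(\mc{M}_n)$ holds for every $n$, with no hypotheses needed. So the content of the lemma is the reverse inclusion, for $n$ large.

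For the reverse inclusion I would argue by contradiction. Suppose that along a subsequence (which, after relabelling, we may take to be all $n$) there is a point $y_n\in\mc{K}(\mc{M}_n)\setminus\mc{K}(\mc{M})$. By compactness of $\mc{M}_0$ we may assume $y_n\to y_\infty$. The key observation is that $\mc{K}(\mc{M}_n)$ is $\varphi_t$-invariant and contained in $\mc{M}_n$, so the whole orbit $\{\varphi_t(y_n):t\in\rr\}$ lies in $\mc{M}_n$; hence for any fixed $T>0$ the compact arc $\varphi_{[-T,T]}(y_n)$ lies in $\mc{M}_n$. Letting $n\to\infty$ and using $\bigcap_n\mc{M}_n=\mc{M}$ together with the fact that $\mc{M}$ is closed, one gets $\varphi_{[-T,T]}(y_\infty)\subseteq\mc{M}$; since $T$ is arbitrary, the full orbit of $y_\infty$ stays in $\mc{M}$, i.e.\ $y_\infty\in\mc{K}(\mc{M})$. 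Now hypothesis \eqref{H1}, $\mc{K}(\mc{M})\subset\mc{M}^\circ$, gives an open neighbourhood; more precisely, since $\mc{K}(\mc{M})$ is compact and contained in $\mc{M}^\circ$, there is $\epsi>0$ with the $\epsi$-neighbourhood $U$ of $\mc{K}(\mc{M})$ (in the distance $d$ on $\mc{N}$) still contained in $\mc{M}^\circ$. The plan is then to derive a contradiction with $y_n\notin\mc{K}(\mc{M})$ by showing that, for large $n$, the point $y_n$ is forced into $\mc{K}(\mc{M})$.

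To make that last step precise I would use the attracting property \eqref{disttoK} of the trapped set, in the following uniform form. Since $\mc{M}$ is compact and non-trapping points exit, one can find $T_0>0$ such that any $z\in\mc{M}$ with $\varphi_{[-T_0,T_0]}(z)\subseteq\mc{M}$ actually satisfies $z\in U$ (the $\epsi$-neighbourhood of $\mc{K}(\mc{M})$): otherwise there would be a sequence $z_k\in\mc{M}\setminus U$ with $\varphi_{[-k,k]}(z_k)\subseteq\mc{M}$, and a limit point would lie in $\mc{K}(\mc{M})\setminus U$, impossible. Apply this with $z=y_n$: we showed $\varphi_{[-T,T]}(y_n)\subseteq\mc{M}_n$ for every $T$; but we need it inside $\mc{M}$, not merely $\mc{M}_n$. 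This is where the shrinking hypothesis is used quantitatively: the orbit arc $\varphi_{[-T_0,T_0]}(y_\infty)$ is a compact subset of $\mc{M}^\circ$, hence at positive distance $\delta$ from $\mc{N}\setminus\mc{M}$; by continuity of the flow, for $n$ large $\varphi_{[-T_0,T_0]}(y_n)$ is within $\delta/2$ of $\varphi_{[-T_0,T_0]}(y_\infty)$, hence $\varphi_{[-T_0,T_0]}(y_n)\subseteq\mc{M}$. Then the previous paragraph forces $y_n\in U\subseteq\mc{M}^\circ$, and iterating — applying the same argument to $\varphi_t(y_n)$ for each $t$, using that the full orbit of $y_n$ is contained in $\mc{M}_n$ and stays near the orbit of $y_\infty\in\mc{K}(\mc{M})$ — one shows the entire orbit of $y_n$ lies in $\mc{M}$, so $y_n\in\mc{K}(\mc{M})$, contradicting the choice of $y_n$.

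The main obstacle I anticipate is this last uniformity/iteration argument: one must upgrade "every finite arc of the orbit of $y_n$ lies in $\mc{M}_n$" to "the whole orbit lies in $\mc{M}$", and the danger is that the orbit of $y_n$ could make long excursions far from $\mc{K}(\mc{M})$ (hence possibly outside $\mc{M}$ even if inside $\mc{M}_n$) before returning. The fix is to use both facts about $y_n$ simultaneously — that its orbit never leaves $\mc{M}_n$ (so it cannot escape to the boundary of $\mc{N}$) and that, by the limiting argument, its orbit shadows that of the trapped point $y_\infty$ on every compact time window uniformly in $n$ — together with the a priori fact that $\mc{K}(\mc{M}_0)$ is compact so all these orbits live in a fixed compact set; a standard diagonal/compactness extraction then closes the gap. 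I would present this cleanly by first proving the auxiliary uniform statement about $T_0$ and the neighbourhood $U$, and then deducing the lemma.
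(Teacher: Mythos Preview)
Your setup is correct: the inclusion $\mc{K}(\mc{M})\subseteq\mc{K}(\mc{M}_n)$ is immediate, and your contradiction argument showing that any limit $y_\infty$ of $y_n\in\mc{K}(\mc{M}_n)\setminus\mc{K}(\mc{M})$ lies in $\mc{K}(\mc{M})$ is fine. The gap is exactly where you suspect it, in the iteration step. You want to upgrade ``$\varphi_{[-T_0,T_0]}(y_n)\subset\mc{M}$ for $n$ large'' to ``the full orbit of $y_n$ lies in $\mc{M}$'', but your auxiliary statement about $T_0$ only lets you conclude $\varphi_t(y_n)\in U$ when you \emph{already know} $\varphi_{[t-T_0,t+T_0]}(y_n)\subset\mc{M}$; it does not let you push the interval outward. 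The claim that the orbit of $y_n$ ``stays near the orbit of $y_\infty$'' is only true on compact time windows, with an $n$-threshold depending on the window, and no diagonal extraction fixes this: the orbit of $y_n$ really can make an excursion out of $\mc{M}$ at some large time $t_n$ while shadowing $y_\infty$ on $[-T,T]$ for every fixed $T$.

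The paper avoids this entirely by not trying to prove $y_n\in\mc{K}(\mc{M})$. Since $y_n\notin\mc{K}(\mc{M})$, pick $t_n$ with $z_n:=\varphi_{t_n}(y_n)\notin\mc{M}$; since the whole orbit lies in $\mc{M}_n$, we have $z_n\in\mc{M}_n\setminus\mc{M}$. Any limit point $z$ then lies in $\bigcap_n\mc{M}_n=\mc{M}$ but not in $\mc{M}^\circ$, so $z\in\partial\mc{M}$. On the other hand, $z_n\in\mc{K}(\mc{M}_n)$ gives $\tau_{\mc{M}_m}^\pm(z_n)=\pm\infty$ for $n\geq m$, and upper semi-continuity yields $\tau_{\mc{M}_m}^\pm(z)=\pm\infty$ for every $m$, hence $\tau_{\mc{M}}^\pm(z)=\pm\infty$ and $z\in\mc{K}(\mc{M})$. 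This contradicts $\mc{K}(\mc{M})\subset\mc{M}^\circ$. The point is that by translating along the orbit to a point \emph{outside} $\mc{M}$, you get the contradiction at the boundary directly, with no bootstrapping needed. (Alternatively, and even more cheaply: the $\mc{K}(\mc{M}_n)$ form a nested sequence of compact $\varphi_t$-invariant sets with $\bigcap_n\mc{K}(\mc{M}_n)=\mc{K}(\mc{M})\subset\mc{M}^\circ$; a decreasing sequence of compacts with intersection in an open set is eventually contained in that open set, so $\mc{K}(\mc{M}_n)\subset\mc{M}$ for large $n$, and then invariance forces $\mc{K}(\mc{M}_n)\subset\mc{K}(\mc{M})$.)
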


\begin{proof}
Assume by contradiction that, for infinitely many $n\in\nn$, there exists $y_n\in \mc{M}_n\setminus \mc{K}(\mc{M})$ such that $\tau_{\mc{M}_n}^+(y_n)=\infty$. Since $y_n\not\in \mc{K}(\mc{M})$, there exists $t_n\geq0$ such that $z_n:=\varphi_{t_n}(y_n)\not\in\mc{M}$. Notice that $\tau_{\mc{M}_n}^+(z_n)=\infty$. Up to extracting a subsequence, we have $z_n\to z\in \partial\mc{M}$. Since the exit time functions are upper semi-continuous, $\tau_{\mc{M}_n}^+(z)=\infty$ for all $n\in\nn$, and therefore $\tau_{\mc{M}}^+(z)=\infty$. This implies that $z\in \mc{K}(\mc{M})$, which contradicts the fact that $\mc{K}(\mc{M})\subset \mc{M}^\circ$.
\end{proof}

Let $\rho:\mc{M}\to[0,\infty)$ be a smooth boundary defining of $\pl \mc{M}$, i.e. $\partial\mc{M}=\rho^{-1}(0)$ and $d\rho|_{\pl \mc{M}}(y)\not=0$ for all $y\in \pl \mc{M}$. 
We partition $\partial \mc{M}$ into the incoming boundary $\partial_- \mc{M}$, the outgoing boundary $\partial_+ \mc{M}$, and the glancing boundary $\partial_0 \mc{M}$, given by 
\[ \pl_\mp \mc{M}=\{ y\in \pl\mc{M}\ |\ \pm X\rho(y)>0\}, \quad 
\pl_0\mc{M}=\{ y\in \pl\mc{M}\ |\ X\rho(y)=0\}.\]
We denote the positive and negative flowouts of $\pl_0\mc{M}$ by 
\begin{equation}\label{Gpm}
\mc{G}_\pm=\big\{\varphi_{\pm t}(y)\in \mc{M}\ \big|\  y\in\pl_0\mc{M},\  t\in[0,|\tau^{\pm}_{\mc{M}}(y)|]\big\},
\end{equation}  
and we set
\begin{align*}
 \mc{G}:=\mc{G}_+\cap \mc{G}_-.
\end{align*}
Both $\mc{G}_+$ and $\mc{G}_-$ are closed in $\mc{M}\setminus \Gamma_\pm$.
By the implicit function theorem, we have that
\begin{equation}\label{regulariteTau}
\tau^\pm_{\mc{M}}|_{\mc{M}\setminus (\Gamma_\mp\cup \mc{G}_\mp)}\in C^\infty( \mc{M}\setminus (\Gamma_\mp\cup \mc{G}_\mp)),
\end{equation}
whereas the restrictions $\pm \tau^\pm_{\mc{M}}|_{\mc{M}\setminus \Gamma_\mp}$ are only upper semi-continuous.

\subsection{Convex neighborhoods of the trapped set}

A result of Conley and Easton shows that the trapped sets in the interior of a compact manifold admit open neighborhoods satisfying a certain convexity condition.

\begin{prop}[Theorem 1.5 in \cite{Conley:1971eu}]\label{t:conley}
If $\mc{K}\subset\mc{M}^\circ$, then $\mc{K}$ admits a compact neighborhood $\mc{U}\subset \mc{M}^\circ$ with smooth boundary $\partial \mc{U}$ such that 
$\partial_0\mc{U}:=\big\{y\in\partial \mc{U}\ \big|\ X(y)\in T_y\partial \mc{U}\big\}$
is a smooth submanifold of $\partial\mc{U}$ of codimension 1. Moreover, for every $y\in\partial_0\mc{U}$ there exists $\epsilon=\epsilon(y)>0$ such that $\varphi_t(y)\not\in\mc{U}$ for all $t\in[-\epsilon,\epsilon]\setminus\{0\}$.
\hfill\qed
\end{prop}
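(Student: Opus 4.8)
The plan is to invoke the theory of isolating blocks from Conley–Easton \cite{Conley:1971eu} essentially verbatim, since the statement we are asked to prove is precisely their Theorem~1.5 specialized to our setting; the only thing to check is that our hypotheses match theirs. First I would recall that, since $\mc{K}$ is the maximal invariant set in $\mc{M}$ and, by hypothesis \eqref{H1}, $\mc{K}\subset\mc{M}^\circ$, the set $\mc{K}$ is an \emph{isolated invariant set} for the flow $\varphi_t$ on the manifold $\cN$: indeed any compact neighborhood $N\subset\mc{M}^\circ$ of $\mc{K}$ with $N\cap\pl\mc{M}=\varnothing$ is an isolating neighborhood, because every $y\in N$ whose full orbit stays in $N$ must in particular have its full orbit in $\mc{M}$, hence lies in $\mc{K}(\mc{M})=\mc{K}$, and by shrinking $N$ one arranges that no such orbit meets $\pl N$.

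Next I would recall the definition of an isolating block: a compact neighborhood $\mc{U}$ of $\mc{K}$ with smooth boundary such that the vector field $X$ is externally tangent to $\pl\mc{U}$ only along a codimension-one submanifold $\pl_0\mc{U}$, the immediate exit set, and such that the orbit of each such tangency point leaves $\mc{U}$ locally in both time directions. Conley–Easton \cite[Theorem~1.5]{Conley:1971eu} prove that every isolated invariant set in the interior of a manifold admits a basis of such isolating blocks; in particular one can choose $\mc{U}\subset\mc{M}^\circ$. I would therefore simply apply their theorem to the isolated invariant set $\mc{K}$ sitting inside the open manifold $\mc{M}^\circ$ (equivalently inside $\cN$, with the block chosen small enough to lie in $\mc{M}^\circ$), and translate the conclusion: the block $\mc{U}$ has smooth boundary, the tangency locus $\pl_0\mc{U}=\{y\in\pl\mc{U}:X(y)\in T_y\pl\mc{U}\}$ is a smooth hypersurface in $\pl\mc{U}$, and for each $y\in\pl_0\mc{U}$ there is $\epsilon(y)>0$ with $\varphi_t(y)\notin\mc{U}$ for $0<|t|\le\epsilon(y)$, which is exactly the ``external tangency'' / immediate-exit property. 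This yields the statement.

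The main obstacle, such as it is, is purely expository: making the identification between the dynamical vocabulary of \cite{Conley:1971eu} (isolated invariant set, isolating neighborhood, isolating block) and the exit-time vocabulary set up in the preceding subsection, and in particular checking that ``$\mc{K}$ is the maximal invariant set in $\mc{M}$ and lies in $\mc{M}^\circ$'' genuinely supplies an isolating neighborhood. This is where property \eqref{disttoK} and the closedness and flow-invariance of $\mc{K}$ are used: they guarantee that a sufficiently small compact neighborhood of $\mc{K}$ disjoint from $\pl\mc{M}$ contains no larger invariant set, so it is isolating in the sense required to quote Conley–Easton. Since we are allowed to cite \cite[Theorem~1.5]{Conley:1971eu} directly, no dynamical construction needs to be reproduced; the proof is short, amounting to "this is a restatement of \cite[Theorem~1.5]{Conley:1971eu} once \eqref{H1} is read as the statement that $\mc{K}$ is an isolated invariant set in the interior of $\mc{M}$", which is in fact exactly what the excerpt does by ending the statement with \texttt{\textbackslash hfill\textbackslash qed}.
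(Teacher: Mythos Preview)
Your proposal is correct and matches the paper's approach exactly: the paper gives no proof at all, simply citing \cite[Theorem~1.5]{Conley:1971eu} and closing with \texttt{\textbackslash hfill\textbackslash qed}. Your verification that hypothesis \eqref{H1} makes $\mc{K}$ an isolated invariant set (so that Conley--Easton applies) is the only thing to check, and you have identified it correctly.
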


Notice, however, that $\mc{U}$ itself is not convex for the dynamics: there may be points $y,\varphi_t(y)\in \mc{U}$ such that $\varphi_s(y)\not\in \mc{U}$ for some $s\in(0,t)$. 
In order to use the results of \cite{Dyatlov:2016zh} on the resolvent of the flow, we need a stronger (quadratic) convexity condition for the boundary of $\mc{U}$. We shall achieve this condition by slightly  modifying the vector field $X$ near $\pl_0\mc{U}$; the changes in the dynamics will be negligible for our purposes.

\begin{lem}\label{modifconley}
There exists a vector field $X_0$ on $\mc{M}$ such that $X-X_0$ is arbitrarily small in the $C^\infty$ topology and supported in an arbitrarily small neighborhood $\mc{O}$ of $\pl_0\mc{U}$, with the following property: if $r\in C^\infty(\mc{U};[0,\infty))$ is a boundary defining function of $\pl \mc{U}$,  then for each $y\in \pl\mc{U}$ such that $X_0r(y)=0$ we have $X_0^2r(y)<0$.   Moreover, the backward and forward trapped sets for the flow of $X_0$ on $\mc{U}$ are those of $X$, i.e. $\Gamma_\pm(\mc{U})$.
\end{lem}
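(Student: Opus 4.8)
The plan is to perturb $X$ only inside a small tubular neighborhood $\mc{O}$ of the codimension-$1$ submanifold $\pl_0\mc{U}\subset\pl\mc{U}$, using a cutoff multiple of the ``outward'' direction, so that the glancing set of the new field $X_0$ on $\pl\mc{U}$ coincides with $\pl_0\mc{U}$ and the second-order tangency condition becomes strictly quadratic there. Concretely, fix a boundary defining function $r\in C^\infty(\mc{U};[0,\infty))$ of $\pl\mc{U}$. On $\pl_0\mc{U}$ we have $Xr=0$ by definition, and by Proposition~\ref{t:conley} the orbit of each $y\in\pl_0\mc{U}$ leaves $\mc{U}$ for small nonzero times on both sides; this is equivalent to saying that $X^2 r(y)\le 0$ for $y\in \pl_0\mc U$ (the function $t\mapsto r(\varphi_t(y))$ has a local maximum at $t=0$). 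The issue is only that this inequality need not be strict. I would therefore choose a function $\chi\in C^\infty_c(\mc{O})$ equal to $1$ near $\pl_0\mc{U}$, a small parameter $\delta>0$, and set
\[
X_0:=X-\delta\,\chi\,(Xr)\,Y,
\]
where $Y$ is a smooth vector field defined near $\pl\mc U$ with $Yr\equiv 1$ (e.g. $Y=\nabla r/|\nabla r|^2$ with respect to some auxiliary metric, cut off away from a neighborhood of $\pl\mc U$). Then $X-X_0=\delta\chi(Xr)Y$ is supported in $\mc{O}$ and is $O(\delta)$ in every $C^k$ norm, hence arbitrarily small in $C^\infty$; shrinking $\mc O$ makes the support as small as we like.

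Next I would verify the two conclusions. For the glancing structure: $X_0 r = Xr - \delta\chi(Xr)(Yr) = (1-\delta\chi)Xr$, so for $\delta<1$ the zero set of $X_0 r$ on $\pl\mc U$ is exactly $\{Xr=0\}=\pl_0\mc{U}$. Now compute $X_0^2 r$ at a point $y\in\pl_0\mc{U}$: since $X_0 r=(1-\delta\chi)Xr$ and $Xr$ vanishes at $y$ while $\chi\equiv 1$ near $\pl_0\mc{U}$,
\[
X_0^2 r(y)=X_0\big((1-\delta\chi)Xr\big)(y)=(1-\delta\chi(y))\,X_0(Xr)(y)
=(1-\delta)\big(X^2 r(y)-\delta(Xr\cdot Y)(Xr)(y)\big)=(1-\delta)\,X^2 r(y).
\]
So strict negativity of $X_0^2 r$ on $\pl_0\mc U$ is equivalent to strict negativity of $X^2 r$ there, which is \emph{not} automatic. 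To force it, I would first replace $r$ by a better boundary defining function: near a point of $\pl_0\mc U$ where $X^2 r=0$, one can add to $r$ a term of the form $-c\,\psi\, s^2$, where $s$ is a defining function of $\pl_0\mc{U}$ inside $\pl\mc{U}$ extended into $\mc U$, $\psi$ a cutoff, and $c>0$ large; a short computation shows $X^2(r-c\psi s^2)=X^2 r-2c(Xs)^2+(\text{lower order})$ at $\pl_0\mc U$, and $Xs\ne 0$ there because $X$ is transverse to $\pl_0\mc U$ inside $\pl\mc U$ (this transversality is exactly the content of the ``$\varphi_t(y)\notin\mc U$ for $t\in[-\epsilon,\epsilon]\setminus\{0\}$'' clause of Proposition~\ref{t:conley}, applied within the hypersurface $\pl\mc U$). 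Choosing $c$ large and using compactness of $\pl_0\mc U$ yields a defining function, still called $r$, with $X^2 r<0$ on all of $\pl_0\mc{U}$; the perturbation argument above then gives $X_0^2 r<0$, and by continuity this persists for $\delta$ small. Note the statement only requires the existence of \emph{some} defining function $r$ realizing the quadratic convexity, so this replacement is harmless.

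Finally, for the trapped sets: by \eqref{H1} and Proposition~\ref{t:conley}, $\mc{K}(\mc U)=\mc{K}\subset\mc{U}^\circ$ lies at positive distance from $\pl\mc{U}\supset\mc{O}\cap\pl\mc{U}$; more to the point, $\mc O$ is a neighborhood of $\pl_0\mc U\subset\pl\mc U$, so after shrinking $\mc O$ we may assume $\mc O\cap\mc K(\mc U)=\varnothing$ and in fact $\mc{O}$ is disjoint from a whole neighborhood of $\mc K(\mc U)$. The forward trapped set $\Gamma_-(\mc U)$ consists of orbits that stay in $\mc U$ for all forward time and hence, by \eqref{disttoK}, converge to $\mc K(\mc U)$; such an orbit can meet $\mc O$ only finitely often — indeed not at all once it is close enough to $\mc{K}(\mc U)$ — but that is not quite enough by itself, so instead I argue as follows. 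Because $X=X_0$ off $\mc{O}$, any orbit segment of $X$ lying in $\mc{U}\setminus\mc{O}$ is also an orbit segment of $X_0$ and vice versa. A point $y\in\Gamma_-(\mc U;X)$ has forward $X$-orbit eventually trapped near $\mc K(\mc U)\subset\mc U\setminus\mc O$; on the other hand, on the tail where the orbit stays in $\mc{U}\setminus\mc O$ the two flows agree, so $y\in\Gamma_-(\mc U;X_0)$ as well — provided we also know the orbit does not escape $\mc U$ through $\mc{O}$ under $X_0$ during the finitely many earlier passages through $\mc{O}$. This is where $\delta$ being small is used: on $\mc{O}$ the two vector fields are $O(\delta)$-close, and by choosing $\mc O$ a thin tube about $\pl_0\mc U$ and $\delta$ small, the $X_0$-flow through $\mc O$ stays $O(\delta)$-close to the $X$-flow over the bounded transit time, hence remains in $\mc U$; the reverse inclusion is symmetric. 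The main obstacle is precisely this last point — controlling the perturbed orbits during their passages through $\mc O$ so that no new escapes or new trapping is created — and it is handled by the uniform continuity of the flow in the vector field over compact time intervals together with the freedom to take $\mc{O}$ arbitrarily thin and $\delta$ arbitrarily small. The same argument with time reversed gives $\Gamma_+(\mc U;X_0)=\Gamma_+(\mc U;X)=\Gamma_+(\mc U)$, completing the proof. \qed
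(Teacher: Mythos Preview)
Your perturbation $X_0=X-\delta\chi(Xr)Y$ is harmless but also useless: as you correctly compute, $X_0^2r(y)=(1-\delta)X^2r(y)$ on $\pl_0\mc{U}$, so nothing is gained at points where $X^2r(y)=0$. The attempted rescue by modifying $r$ has a genuine gap. First, $r-c\psi s^2$ is \emph{not} a boundary defining function of $\pl\mc{U}$: on $\pl\mc{U}\setminus\pl_0\mc{U}$ one has $r=0$ but $s\neq0$, so the new function is strictly negative there. More fundamentally, the quantity $X^2r|_{\pl_0\mc{U}}$ is independent of the choice of defining function up to a positive factor: if $r'=fr$ with $f>0$, then at $y\in\pl_0\mc{U}$ (where $r=Xr=0$) one has $X^2r'(y)=f(y)\,X^2r(y)$. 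So no change of defining function can turn a zero of $X^2r$ into a strict negative.

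The transversality claim ``$Xs\neq0$ on $\pl_0\mc{U}$'' is also false exactly where you need it. At $y\in\pl_0\mc{U}$ the vector $X(y)$ is tangent to $\pl\mc{U}$, and $X(y)\in T_y\pl_0\mc{U}$ if and only if $X^2r(y)=0$ (since $Xr$ cuts out $\pl_0\mc{U}$ in $\pl\mc{U}$, so $T_y\pl_0\mc{U}=\ker d(Xr)|_{T_y\pl\mc{U}}$). Hence $Xs(y)=0$ precisely at the bad points. The Conley--Easton clause ``$\varphi_t(y)\notin\mc{U}$ for small $t\neq0$'' only says the orbit leaves $\mc{U}$ normally; it gives no information about motion \emph{within} $\pl\mc{U}$ and is perfectly compatible with $X^2r(y)=0$ (the escape is then governed by higher even-order derivatives).

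The paper's fix is to perturb $X$ differently, with a tangential component along $\pl\mc{U}$: choose a smooth $s$ with $\pl_\pm\mc{U}=\{\mp s>0\}$ and a vector field $S$ tangent to $\pl\mc{U}$ with $Ss>0$, set $R=\nabla r/|\nabla r|^2$, and take $X_0=X+\eps\chi(sR-S)$. Then $X_0r=Xr+\eps\chi s$, so the new glancing set is still $\pl_0\mc{U}$, and at $y\in\pl_0\mc{U}$ one computes
\[
X_0^2r(y)=X^2r(y)+\eps\,Xs(y)-\eps\,SXr(y)-\eps^2\,Ss(y).
\]
Sign arguments (using Conley--Easton) give $X^2r\leq0$, $Xs\leq0$, $SXr\geq0$, and the crucial new term $-\eps^2 Ss(y)<0$ forces strict negativity regardless of whether $X^2r(y)$ vanishes. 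The point is that the perturbation must contain a piece \emph{tangent} to $\pl\mc{U}$ that moves the flow transversally across $\pl_0\mc{U}$; your normal perturbation $-\delta\chi(Xr)Y$ vanishes to first order on $\pl_0\mc{U}$ and cannot do this. Your argument for the trapped sets is fine and matches the paper's.
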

\begin{proof} 
We consider a smooth function $s\in C^\infty(\mc{U};\rr)$ without critical points on $\pl_0\mc{U}$, satisfying $\pl_0\mc{U}=\pl_0\mc{U}\cap s^{-1}(0)$ and 
\begin{align*}
\pl_\pm \mc{U}
:=
\big\{ y\in \pl \mc{U}\ \big|\  \mp Xr(y)>0\big\}
=
\big\{y\in \pl \mc{U}\ \big|\ \mp s(y)>0\big\}.
\end{align*}
Let $\mc{O}\subset \mc{M}$ be an arbitrarily small open neighborhood of $\pl_0\mc{U}$,  and 
$\chi\in C_c^\infty(\mc{O};\rr^+)$ a smooth function identically equal to $1$ on a neighborhood $\pl_0\mc{U}$. We require $\mc{O}$ to be small enough so that $r|_{\mc{O}}$ has no critical points.  We introduce an arbitrary Riemannian metric $G$ on $\mc{M}$ and define the vector field $R:=\|\nabla r\|^{-2}\nabla r$, where $\nabla$ denotes the gradient and $\|\cdot\|$ the norm with respect to $G$.  Let $S$ be a smooth vector field on $\mc{M}$, tangent to $\pl \mc{U}$, that satisfies $ds(S)>0$ and $G(S,R)\equiv0$ in a neighborhood of $\pl_0\mc{U}$. We set
\begin{align*}
X_0:=X+\eps\chi (sR-S).
\end{align*}
Since $\mp Xr>0$ in $\partial_\pm\mc{U}$ and $(sR-S)r=s$, we infer that for $y\in \pl \mc{U}$, $X_0r(y)=0$ if and only if $y\in\partial_0\mc{U}$. For each $y\in \pl_0\mc{U}$, we have
\begin{align*}
X_0^2r(y)
& = 
X^2r(y) + \eps^2(sR-S)^2 r(y) + \eps (sR-S)Xr(y) + \eps X (sR-S)r(y)\\
& = 
X^2r(y) - \eps^2 Ss(y) - \eps SXr(y) + \eps Xs(y).
\end{align*}
Proposition \ref{t:conley} implies that $X^2r(y)\leq0$. Moreover, $Xs(y)\leq 0$ (otherwise, by Proposition \ref{t:conley} and a continuity argument, there would be a point $y'\in \mc{U}$ close to $y$ and a small $t>0$ such that $\varphi_t(y')\in\pl_-\mc{U}$). Since $\pm Xr>0$ and $\pm s>0$ in $\partial_\mp\mc{U}$, and since $Ss>0$, we also have that $SXr(y)\geq0$. Summing up, we have 
\begin{align*}
 X_0^2r(y) \leq \eps^2 Ss(y)<0.
\end{align*}
By Proposition \ref{t:conley} and the continuity of the flow $\varphi_t$, every $y\in \pl_0\mc{U}$ has a  neighborhood $B_y$ such that, for each $y'\in B_y$, the orbit of $y$ exits $\mc{U}$ for some (small) positive and negative times. Since $\partial_0\mc{U}$ is compact, we can choose the above neighborhood $\mc{O}$ to be contained in a finite union of such $B_y$'s. In particular, $\Gamma_\pm(\mc{U})\cap\mc{O}=\varnothing$. If we choose $\epsilon>0$ sufficiently small, the orbit of every $y'\in\mc{O}$ with respect to the flow of $X_0$ still exits $\mc{U}$ in both positive and negative time. This proves that the backward and forward trapped sets for the dynamic of $X_0$ in $\mc{U}$ are still $\Gamma_\pm(\mc{U})$.
\end{proof}

We denote by $\phi_t$ the flow of $X_0$ defined in the Lemma~\ref{modifconley} and let 
$\tau_{0,\mc{U}}^\pm$ and $\tau_{0,\mc{M}}^\pm$ be respectively the exit times from $\mc{U}$ and $\mc{M}$ for the modified flow $\phi_t$.
\begin{lem}
\label{nbhd_of_K}
There exists an arbitrarily small neighbourhood $\mc{C}\subset \mc{U}^\circ$ of $\mc{K}$ such that, for each $y\in \mc{C}$, 
we have $\phi_t(y) \notin \mc{C}$ for each $t\in(\tau^-_{0,\mc{M}}(y),\tau^-_{0,\mc{U}}(y))\cup(\tau^+_{0,\mc{U}}(y),\tau^+_{0,\mc{M}}(y))$.
\end{lem}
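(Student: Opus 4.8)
The plan is to prove the lemma by contradiction, showing that any sufficiently small \emph{metric ball} around $\mc{K}$ already works. Fix a distance $d$ on $\mc{N}$, and recall that, since the perturbation $X-X_0$ in Lemma~\ref{modifconley} is small and supported near $\pl_0\mc{U}$ (hence away from $\mc{K}$), the trapped set of $\phi_t$ in $\mc{M}$ is still $\mc{K}\subset\mc{U}^\circ$, i.e. $\bigcap_{t\in\rr}\phi_t(\mc{M})=\mc{K}$; in particular $\mc{K}\cap\pl\mc{U}=\varnothing$. Suppose the lemma is false: then there is a neighbourhood $\mc{V}_0$ of $\mc{K}$ such that no neighbourhood of $\mc{K}$ contained in $\mc{V}_0\cap\mc{U}^\circ$ has the stated property. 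For $\epsilon>0$ small, $\mc{C}_\epsilon:=\{x\in\mc{U}^\circ\mid d(x,\mc{K})<\epsilon\}$ is such a neighbourhood, hence fails the property: there are $y_\epsilon\in\mc{C}_\epsilon$ and a time $t_\epsilon$ lying in one of the two forbidden intervals with $\phi_{t_\epsilon}(y_\epsilon)\in\mc{C}_\epsilon$. Taking $\epsilon=\epsilon_n\downarrow0$, writing $y_n:=y_{\epsilon_n}$, $t_n:=t_{\epsilon_n}$, and passing to a subsequence, we may assume $t_n\in(\tau^+_{0,\mc{U}}(y_n),\tau^+_{0,\mc{M}}(y_n))$ for every $n$ (the case $t_n\in(\tau^-_{0,\mc{M}}(y_n),\tau^-_{0,\mc{U}}(y_n))$ being entirely analogous, by reversing the flow), and, since $\mc{K}$ is compact, that $y_n\to y_\infty\in\mc{K}$. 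Note that $y_n\to\mc{K}$ and $\phi_{t_n}(y_n)\to\mc{K}$, as both points lie in $\mc{C}_{\epsilon_n}$.

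The first step is to record the point $r_n$ where the orbit of $y_n$ leaves $\mc{U}$. Since $y_n\in\mc{U}^\circ$ and $a_n:=\tau^+_{0,\mc{U}}(y_n)<t_n$, we have $0<a_n<t_n<\infty$, and — $\mc{U}$ being compact — $r_n:=\phi_{a_n}(y_n)\in\pl\mc{U}$. I claim $a_n\to+\infty$: otherwise, passing to a subsequence with $a_n\to a<\infty$, continuity of the flow gives $r_n=\phi_{a_n}(y_n)\to\phi_a(y_\infty)$, which lies in $\mc{K}$ by invariance, whereas $r_n\in\pl\mc{U}$ (a closed set) forces the limit into $\pl\mc{U}$, contradicting $\mc{K}\cap\pl\mc{U}=\varnothing$. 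Passing to a further subsequence, we also get $r_n\to r_\infty\in\pl\mc{U}$ by compactness of $\pl\mc{U}$. In the same way $t_n-a_n\to+\infty$: if $t_n-a_n\to c<\infty$ along a subsequence, then $\phi_{t_n}(y_n)=\phi_{t_n-a_n}(r_n)\to\phi_c(r_\infty)$, and since $\phi_{t_n}(y_n)\to\mc{K}$ this puts $\phi_c(r_\infty)$, hence $r_\infty$, into $\mc{K}$ — again impossible.

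The contradiction now comes from $r_\infty$. For $s\in[-a_n,\,t_n-a_n]$ one has $\phi_s(r_n)=\phi_{a_n+s}(y_n)$ with $a_n+s\in[0,t_n]$, and the orbit of $y_n$ stays in $\mc{M}$ throughout $[0,t_n]$ because $t_n<\tau^+_{0,\mc{M}}(y_n)$ and $\phi_{t_n}(y_n)\in\mc{C}_{\epsilon_n}\subset\mc{M}$; hence $\phi_s(r_n)\in\mc{M}$ for all such $s$. Given any $T>0$, for $n$ large we have $a_n>T$ and $t_n-a_n>T$, so $[-T,T]\subset[-a_n,\,t_n-a_n]$ and $\phi_s(r_n)\in\mc{M}$ for $|s|\le T$; letting $n\to\infty$ and using that $\mc{M}$ is closed gives $\phi_s(r_\infty)\in\mc{M}$ for $|s|\le T$. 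As $T$ was arbitrary, the entire $\phi_t$-orbit of $r_\infty$ lies in $\mc{M}$, i.e. $r_\infty\in\bigcap_{t\in\rr}\phi_t(\mc{M})=\mc{K}\subset\mc{U}^\circ$, contradicting $r_\infty\in\pl\mc{U}$. This proves the lemma.

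The only real subtlety is forcing both the escape time $a_n$ and the return time $t_n-a_n$ to diverge: this is exactly what upgrades a hypothetical ``near-homoclinic excursion'' — an orbit leaving $\mc{U}$ and coming back close to $\mc{K}$ while never leaving $\mc{M}$, which is precisely what would violate the lemma — into a bona fide bi-infinite orbit through $\pl\mc{U}$ that is trapped in $\mc{M}$; and such an orbit cannot exist, because the maximal invariant set of $\phi_t$ in $\mc{M}$ is $\mc{K}$, which sits in $\mc{U}^\circ$. The one non-elementary input is the identity $\bigcap_{t}\phi_t(\mc{M})=\mc{K}$ for the perturbed flow, which is guaranteed by keeping the perturbation of Lemma~\ref{modifconley} small enough (so that $\mc{M}$ remains an isolating neighbourhood of $\mc{K}$).
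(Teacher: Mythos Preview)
Your proof is correct and follows essentially the same approach as the paper's: both argue by contradiction, extract a sequence $y_n\to\mc{K}$ with $\phi_{t_n}(y_n)\to\mc{K}$, pass to the exit point $r_n=\phi_{\tau^+_{0,\mc{U}}(y_n)}(y_n)\in\pl\mc{U}$ (called $w_n$ in the paper), and show that any subsequential limit $r_\infty\in\pl\mc{U}$ is $\phi_t$-trapped in $\mc{M}$, contradicting $\mc{K}\cap\pl\mc{U}=\varnothing$. The only cosmetic difference is that the paper phrases the last step via upper semi-continuity of $\pm\tau^\pm_{0,\mc{M}}$, whereas you argue directly that $\phi_s(r_\infty)\in\mc{M}$ for all $s$; and you make explicit the (implicitly used) fact that the $\phi_t$-trapped set in $\mc{M}$ is still $\mc{K}$.
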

\begin{proof}
Let us assume, by contradiction, that such a neighborhood $\mc{C}$ does not exist. 
Then, there exist sequences  $y_n\in \mc{U}$ and  $t_n\in(\tau^-_{0,\mc{M}}(y_n),\tau^-_{0,\mc{U}}(y_n))\cup(\tau^+_{0,\mc{U}}(y_n),\tau^+_{0,\mc{M}}(y_n))$
 such that $y_n\to y\in\mc{K}$ and $z_n:=\phi_{t_n}(y_n)\to z\in\mc{K}$. Up to extracting a subsequence, we can assume that all the $t_n$'s have the same sign. Let us consider the case where $t_n>0$, the other case being entirely analogous. Since $\mc{K}\subset\mc{U}$, the continuity of the flow $\phi_t$ implies that 
 $\tau^\pm_{0,\mc{U}}(y_n)\to\pm\infty$ and $\tau^\pm_{0,\mc{U}}(z_n)\to\pm\infty$. Therefore, we have a sequence of points $w_n:=\phi_{\tau^+_{0,\mc{U}}(y_n)}(y_n)\in\partial\mc{U}$ such that
\[
\tau_{0,\mc{M}}^-(w_n)\leq \tau_{0,\mc{M}}^-(y_n)\to-\infty,\qquad
\tau_{0,\mc{M}}^+(w_n)\geq \tau_{0,\mc{M}}^+(z_n)\to\infty.
\]
Since $\pl\mc{U}$ is compact, up to extracting a subsequence we have that $w_n\to w\in \partial \mc{U}$. By the upper semi-continuity of the functions $\pm\tau_{0,\mc{M}}^\pm$, we infer that 
\[\pm\tau_{0,\mc{M}}^\pm(w)\geq \lim_{n\to\infty}\pm\tau_{0,\mc{M}}^\pm(w_n)=
+\infty.\] 
This shows that $w\in\mc{K}$, which contradicts the fact that $\partial \mc{U}\cap\mc{K}=\varnothing$.
\end{proof}

\subsection{Hyperbolic trapped set}\label{hyperK}

We recall that the trapped set $\mc{K}$ is said to be \emph{hyperbolic} when there exist $C>0$, $\nu>0$, and a continuous splitting
\begin{align*}
T_y\mc{M}= \mathrm{span}\{X(y)\}\oplus E_u(y) \oplus E_s(y),\qquad
\forall y\in \mc{K},
\end{align*}
that is invariant under the linearized flow $ d\varphi_t$, and satisfies 
\begin{align*}
\|d\varphi_t(y)w\| \leq C\,e^{-\nu t}\|w\|,
\qquad
\forall w\in E_s(y),\ t\geq 0,\\
\|d\varphi_{-t}(y)w\| \leq C\,e^{-\nu t}\|w\|,
\qquad
\forall w\in E_u(y),\ t\geq 0.
\end{align*}
In the above inequalities, the norm is the one associated to any fixed Riemannian metric $G$ on $\mc{M}$. By~\cite[Lemma~2.10]{Dyatlov:2016zh}, the continuous subbundle $E_s$ admits a continuous extension $E_-$ over $\Gamma_-$ that is invariant under the linearized geodesic flow and, up to lowering the constants $C>0$ and $\nu>0$, satisfies
\begin{align*}
\|d\varphi_t(y)w\| \leq C\,e^{-\nu t}\|w\|,
\qquad
\forall w\in E_-(y),\ t\geq0.
\end{align*}
The continuous subbundle $E_u$ admits a continuous extension $E_+$ over $\Gamma_+$ with similar properties in backward time. We also define the dual spaces $E_\pm^*\subset T^*\mc{M}$ by 
\begin{equation}\label{dualspa}
E_\pm^*(\rr X\oplus E_\pm)=0.
\end{equation}
If $\mc{K}\subset \mc{M}^\circ$ is a hyperbolic set, and both vector field $X$ and $X_0$ have zero divergence near $\Gamma_\pm(\mc{U})$ with respect to a smooth measure $\mu$,  we can use  \cite[Proposition~2.4]{Guillarmou:2017if} to obtain that  $\mu(\Gamma_\pm(\mc{U}))=0$. We claim that
\begin{equation}\label{muGamma}
\mu(\Gamma_\pm)=0.
\end{equation}
Indeed, if $z\in \Gamma_-$, the convergence \eqref{disttoK} implies that there is a small ball $B_z$ centered in $z$ and $T>0$ such that $\varphi_T(B_z)\subset \mc{U}$; therefore 
\[\mu(\Gamma_-\cap B_z)=\mu(\varphi_T(\Gamma_-\cap B_z))\leq\mu( \Gamma_-(\mc{U}))=0.\]

\subsection{Geodesic flows}
The setting that we have introduced so far in this section was rather general. We are now going to focus to the case of geodesic flows, which is the one we need for the applications of this paper. Let $(M,g)$ be an $n$-dimensional compact Riemannian manifold with boundary, with $n\geq 2$. We will always assume $M$ is oriented (many of the obtained results later hold without orientation by taking a finite cover). Without loss of generality, we can consider $M$ as a compact subset of a closed Riemannian manifold $(N,g)$ of dimension $n$. We denote the associated unit tangent bundles by $\mc{M}:=SM$ and $\mc{N}:=SN$. The vector field $X$ and its flow $\varphi_t$ will now be the geodesic vector field and the geodesic flow of $\mc{N}$. As before, we assume that the trapped set $\mc{K}=\mc{K}(\mc{M})$ is contained in the interior $\mc{M}^\circ$. In the introduction, we defined the non-trapping property by the condition $\tau^+_g(y)<\infty$ for all $y\in \pl \mc{M}$ with $\tau_g^+:=\tau^+_{\mc{M}}|_{\pl \mc{M}}$. This is actually equivalent to
 the condition $\tau^+_{\mc{M}}(y)<\infty$ for all $y\in \mc{M}$. Indeed one sense is obvious, while if $\tau_g^+$ is bounded then for $(x,v)\in SM\setminus (\Gamma_-\cup B)$ with $B:=\{\varphi_t(y)\in SM\, \big|\ y\in \pl_-SM\cup \pl_0SM, t\in [0,\tau^+_g(y)]\}$, we have $(x,-v)\in B\cap \Gamma_-$, which is not possible since $\Gamma_-$ and $B$ are two disjoint closed sets satisfying $SM=B\cup \Gamma_-$. 

With a common abuse of notation, we will often denote the points of $\mc{N}$ as pairs $(x,v)$, where $x\in N$ and $v\in S_xN$. We denote by $\pi_0:SM\to M$ the base projection $\pi_0(x,v)=x$.
The tangent space of $\mc{N}$ splits as the direct sum of three vector subbundles 
\begin{equation}\label{splitting} 
T\mc{N}=\mathrm{span}\{ X\}\oplus \mc{V}\oplus \mc{H},
\end{equation}
where $\mc{V}:=\ker d\pi_0$ is the vertical bundle, whereas $\mc{H}$ is the horizontal bundle obtained as the kernel of the Levi-Civita connection map $\kappa:\ker \alpha \to \mc{N}$. If $\mc{Z}\to \mc{N}$ is the bundle with fibers $\mc{Z}_{y}=\{ v\in T_{\pi_0(y)}N\ |\  g(\pi_0(y),v)=0\}$, the maps
$d\pi_0: \mc{H}\to \mc{Z}$ and $\kappa:\mc{V}\to \mc{Z}$ are isomorphisms. 
The Sasaki metric $G$ on $S^*N$ is defined by 
\begin{equation*}
G(\zeta,\zeta')=g(d\pi_0\zeta,d\pi_0\zeta')+g(\kappa\zeta,\kappa\zeta').
\end{equation*}
The unit tangent bundle $\mc{N}$ admits a contact 1-form $\alpha$, called the Liouville form and defined by $\alpha_{(x,v)}(w)=g_x(v,d\pi_0(x,v)w)$. The Riemannian volume form of $(\mc{N},G)$ is given by 
\begin{align}
\label{e:contact_volume_form}
\tfrac{1}{(n-1)!}\alpha\wedge (d\alpha)^{n-1}.
\end{align}
The associated measure $\mu$ on $\mc{N}$ is called the Liouville measure. The geodesic vector field $X$ is the Reeb vector field associated to $\alpha$, i.e $\alpha(X)=1$ and $i_Xd\alpha=0$. In particular, $X$ has zero divergence with respect to the volume form~\eqref{e:contact_volume_form}, and therefore the geodesic flow $\varphi_t$ preserves the measure $\mu$. We refer the reader to, e.g., \cite[Chapter~1]{Paternain:1999fc} for the background about geodesic flows.

The flowouts $\mc{G}_\pm$ of $\pl_0\mc{M} = S(\pl M)$ are closed subsets of zero measure in $\mc{M}\setminus\Gamma_\pm$, i.e.\
$\mu(\mc{G}_\pm)=0$.
Indeed, $\pl_0\mc{M}$ is a smooth submanifold of $\mc{N}$ of codimension $2$, and the flowouts $\mc{G}_\pm$ are contained in the image of the map
\begin{equation}\label{map1}
(-\infty,\infty)\times\pl_0\mc{M}\to \mc{N},\qquad
(t,y)\mapsto \varphi_t(y),
\end{equation}
which has zero measure according to Sard's Theorem.

Two (not necessarily distinct) points $y,y'\in \mc{N}$ are \emph{conjugate} when there exists $t>0$ such that $\varphi_t(y)=(y')$ and $d\varphi_t(y)\mc{V}_{y}\cap \mc{V}_{y'}\neq\{0\}$. With a common abuse of terminology, we also say that $\pi_0(y)$ and $\pi_0(y')$ are conjugate points. We say that $(M,g)$ has no conjugate points when no pair of points in it are conjugate along a geodesic segment contained in $M$. The following lemma is  proved in \cite[Lemma~2.3]{Guillarmou:2017if} (the convexity assumption on $\pl M$ was superfluous in the proof).

\begin{lem}\label{l:no_conjugate_points}
Assume that $(M,g)$ does not contain conjugate points, and the trapped set $\mc{K}$ is hyperbolic and contained in the interior $\mc{M}^\circ$. Then, for every sufficiently small neighborhood $M_1 \subset N$ of $M$, $(M_1,g)$ does not contain conjugate points neither.
\hfill\qed
\end{lem}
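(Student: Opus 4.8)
The plan is to argue by contradiction, exploiting the fact that non-conjugacy for a \emph{compact} manifold is, essentially, a compact open condition once we bound the time of existence of geodesic segments. First I would fix a complete smooth extension $(N,g)$ of $(M,g)$ and recall that, since $\mc{K}$ is hyperbolic and contained in $\mc{M}^\circ$, the exit time $\tau^+_{\mc{M}}$ is finite on $\mc{M}\setminus \Gamma_-$ but may blow up near $\Gamma_-$; hence I cannot directly bound the length of all geodesic segments in a small enlargement $M_1$. The key preliminary observation is therefore that geodesics lingering in $M_1$ for a long time must shadow the trapped set $\mc{K}$, and near $\mc{K}$ the hyperbolicity gives a uniform control: along an orbit segment staying in a small neighborhood of a hyperbolic set, the vertical subspace $\mc{V}$ gets pushed (under $d\varphi_t$) into a cone around the unstable bundle $E_u\oplus\rr X$, which is transverse to $\mc{V}$, so no conjugate pair can occur along such a long segment. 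This is exactly the mechanism used in \cite[Lemma~2.3]{Guillarmou:2017if}, and since the excerpt tells us the convexity of $\pl M$ played no role there, I would simply cite that lemma — as the statement does — and spend the bulk of the argument on making the enlargement uniform.

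Concretely, the steps I would carry out are: (1) By \cite[Lemma~2.3]{Guillarmou:2017if} applied to $(M,g)$ (no convexity needed), there is a length $T_0>0$ such that every geodesic segment contained in $M$ of length $\geq T_0$ contains no conjugate pair; equivalently, if $y,\varphi_t(y)$ are conjugate with the whole segment in $M$, then $t\leq T_0$, but by non-conjugacy of $(M,g)$ there are in fact \emph{no} such pairs at all — so I restate the content as: for every $T>0$, the set of $(y,t)$ with $0<t\leq T$, $\varphi_{[0,t]}(y)\subset M$, and $d\varphi_t(y)\mc{V}_y\cap\mc{V}_{\varphi_t(y)}\neq 0$ is empty. (2) Suppose the conclusion fails: then there is a sequence of neighborhoods $M_j\downarrow M$ (e.g. $M_j=\{\rho\geq -1/j\}$ for a boundary defining function $\rho$ extended to $N$) and, for each $j$, a conjugate pair $y_j,\varphi_{t_j}(y_j)$ with $\varphi_{[0,t_j]}(y_j)\subset M_j$ and $t_j>0$. (3) Split into two cases according to whether $t_j$ stays bounded or $t_j\to\infty$ along a subsequence.

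In the bounded case $t_j\leq T$ I pass to a subsequence so that $y_j\to y_\infty$, $t_j\to t_\infty$; then $\varphi_{[0,t_\infty]}(y_\infty)\subset \bigcap_j M_j = M$, and by continuity of $d\varphi_t$ the limiting vertical spaces still intersect nontrivially (the condition $d\varphi_t(y)\mc{V}_y\cap\mc{V}_{\varphi_t(y)}\neq 0$ is closed), so $y_\infty,\varphi_{t_\infty}(y_\infty)$ is a conjugate pair in $M$ — but if $t_\infty>0$ this contradicts non-conjugacy of $(M,g)$, and $t_\infty=0$ is excluded because then $y_j$ and $\varphi_{t_j}(y_j)$ would be close and one checks (via the Jacobi equation, for $t$ small there is no conjugacy) that no conjugate pair with tiny positive $t$ exists. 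In the unbounded case $t_j\to\infty$: the orbit segment $\varphi_{[0,t_j]}(y_j)$ of length $t_j\to\infty$ is trapped for longer and longer inside the shrinking sets $M_j$; by a standard compactness/limiting argument its midpoint converges (along a subsequence) to a point of $\mc{K}(M)=\bigcap_t\varphi_t(M)$, and using the hyperbolicity of $\mc{K}$ (the cone argument: $d\varphi_s(\mc{V})$ lands in the unstable cone for $s$ large, $d\varphi_{-s}(\mc{V})$ in the stable cone for $s$ large, and these cones are disjoint), one concludes that for $j$ large no conjugate pair can lie on such a long shadowing segment — this is again precisely the content of \cite[Lemma~2.3]{Guillarmou:2017if}, now applied on a fixed small neighborhood where Lemma~\ref{l:trapped} guarantees $\mc{K}(M_j)=\mc{K}(M)$ is still hyperbolic. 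The main obstacle is this last case: one must ensure that the quantitative hyperbolic estimates (constants $C,\nu$ and the cone transversality) survive passing to the slightly larger manifolds $M_j$, which is why Lemma~\ref{l:trapped} on the stability of the trapped set is invoked — with it, $\mc{K}(M_j)=\mc{K}(M)$ for large $j$ and the cone-field argument applies verbatim, completing the contradiction and hence the proof.
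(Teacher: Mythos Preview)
Your proposal is correct and follows essentially the same approach as the paper, which simply cites \cite[Lemma~2.3]{Guillarmou:2017if} and notes that the strict convexity of $\partial M$ was not used in that argument. You have gone further and sketched the actual content of that cited proof --- the bounded-time/unbounded-time dichotomy, the closedness of the conjugacy condition, and the hyperbolic cone argument near $\mc{K}$ together with the stability Lemma~\ref{l:trapped} --- and this reconstruction is faithful to the original.
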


\section{The X-ray transform}

Throughout this section, $(M,g)$ is a smooth manifold with boundary, and we assume that the trapped set $\mc{K}$ is a (possibly empty) compact hyperbolic set contained in the interior $\mc{M}^\circ= SM^\circ$.

\subsection{Resolvents and X-ray transform}
We start by defining the two natural inverses for the first order differential operator $-X$ in $\mc{M}$:
\[\begin{gathered} 
R_{\mc{M}}^\pm: C_c^0(\mc{M}\setminus \Gamma_\pm)\to L^\infty(\mc{M}), \qquad 
R_{\mc{M}}^\pm f(y):=\int_0^{\tau_{\mc{M}}^\pm(y)}f(\varphi_t(y))dt.
\end{gathered}\]
We call them the forward resolvent $R_{\mc{M}}^+$ and backward resolvent $R_{\mc{M}}^-$. For each $f\in C_c^0(\mc{M}\setminus \Gamma_\pm)$, we have 
\begin{equation}\label{eqresolvent}
\left\{\begin{array}{l}
-XR_{\mc{M}}^\pm f=f \,\, \textrm{ in }\mc{D}'(\mc{M}^\circ)\\ 
(R_{\mc{M}}^\pm f)|_{\pl_\pm \mc{M}}=0.
\end{array}\right.
\end{equation}
The operators $R_{\mc{M}}^\pm$ also make sense as maps  
\[R_{\mc{M}}^\pm: C^0(\mc{M})\to L^\infty_{\rm loc}(\mc{M}\setminus \Gamma_\mp).\]
\begin{defi}
The X-ray transform acting on functions on the unit tangent bundle $\mc{M}=SM$ is defined as the bounded operator 
\begin{equation}\label{defXray}
\begin{gathered} 
I : C^0(\mc{M})\to L^\infty_{\rm loc}((\pl_-\mc{M}\cup \pl_0\mc{M})\setminus \Gamma_-), \\ 
If(y):=\int_0^{\tau^+_{\mc{M}}(y)}f(\varphi_t(y))dt=R_{\mc{M}}^+f(y)
\end{gathered}\end{equation}
\end{defi}

\begin{lem}\label{outsideGamma}
For each $k\in\nn \cup\{0\}$, the resolvents $R_{\mc{M}}^\pm$ extend as continuous maps
\begin{equation}
\label{regulRm}
\begin{split}
R_{\mc{M}}^\pm &: C^k(\mc{M})\to C^k(\mc{M}\setminus (\mc{G}_\mp\cup \Gamma_\mp)),\\
R_{\mc{M}}^\pm &: C_c^k(\mc{M}\setminus \Gamma_\pm)\to C^k(\mc{M}\setminus \mc{G}_\mp),\\
R_{\mc{M}}^\pm &: H^k(\mc{M})\to H^k_{\rm loc}(\mc{M}\setminus (\mc{G}_\mp\cup \Gamma_\mp)),\\
R_{\mc{M}}^\pm &: H_{\rm comp}^k(\mc{M}\setminus \Gamma_\pm)\to H^k_{\rm loc}(\mc{M}\setminus \mc{G}_\mp),
\end{split}
\end{equation} 
and, for each  $p\in[1,\infty)$, as continuous maps 
\[R_{\mc{M}}^\pm: C^0(\mc{M})\to L^p(\mc{M}).\]
\end{lem}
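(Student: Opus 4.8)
The plan is to prove everything for the forward resolvent $R_{\mc{M}}^+$; the assertions for $R_{\mc{M}}^-$ then follow by applying the result to the reversed vector field $-X$, under which $\Gamma_\pm$, $\mc{G}_\pm$ and $\pl_\pm\mc{M}$ get interchanged.

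\emph{Regularity on $\mc{M}\setminus(\mc{G}_-\cup\Gamma_-)$ (the first and third displayed lines).} On the open set $\mc{W}:=\mc{M}\setminus(\mc{G}_-\cup\Gamma_-)$ the exit time $\tau^+_{\mc{M}}$ is smooth by \eqref{regulariteTau}, so after the substitution $t=s\,\tau^+_{\mc{M}}(y)$ one can write $R_{\mc{M}}^+f(y)=\tau^+_{\mc{M}}(y)\int_0^1 (f\circ\Phi)(y,s)\,ds$, where $\Phi(y,s):=\varphi_{s\tau^+_{\mc{M}}(y)}(y)$. Since $d\varphi_{s\tau^+_{\mc{M}}(y)}$ is a linear isomorphism between the relevant tangent spaces, the flowout map $\Phi\colon\mc{W}\times[0,1]\to\mc{M}$ is a smooth submersion; hence $\Phi^*$ sends $C^k(\mc{M})$ to $C^k$ and $H^k(\mc{M})$ to $H^k_{\mathrm{loc}}$, integrating out $s\in[0,1]$ and multiplying by the smooth factor $\tau^+_{\mc{M}}$ preserve these regularities, and the bounds obtained are uniform on compact subsets of $\mc{W}$, which gives the continuity.

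\emph{Regularity up to and across $\Gamma_-$ (the second and fourth displayed lines).} This is the heart of the matter. Assume now $\supp f$ is compact and disjoint from $\Gamma_+$, and set $A:=\supp f$. First I would show that for every compact $B\subset\mc{M}$ the sojourn time
\[
T_{A,B}:=\sup\big\{t\in[0,\tau^+_{\mc{M}}(y)]\ :\ y\in B,\ \varphi_t(y)\in A\big\}
\]
is finite: if not, one extracts $y_n\in B$ and $t_n\to\infty$ with $\varphi_{t_n}(y_n)\to z\in A$, and since $\varphi_{t_n-s}(y_n)\in\mc{M}$ for $0\le s\le t_n$, letting $n\to\infty$ gives $\varphi_{-s}(z)\in\mc{M}$ for all $s\ge0$, i.e.\ $z\in\Gamma_+\cap A=\varnothing$, which is absurd. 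Next, fix $y_0\in\Gamma_-\setminus\mc{G}_-$ and a compact neighbourhood $B$ of $y_0$ with $B\cap\mc{G}_-=\varnothing$. Using the upper semicontinuity of $\tau^+_{\mc{M}}$ together with \eqref{regulariteTau} (a point of $\mc{M}\setminus\mc{G}_-$ with finite exit time leaves $\mc{M}$ transversally through $\pl_+\mc{M}$), a limiting argument shows that, after shrinking $B$ to a smaller neighbourhood $V\ni y_0$, one has $\tau^+_{\mc{M}}(y)>T_{A,B}$ for every $y\in V$; here one uses that $\tau^+_{\mc{M}}(y_0)=\infty$ and $y_0\notin\mc{G}_-$. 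Consequently $\varphi_{[0,T_{A,B}]}(y)\subset\mc{M}$ for $y\in V$, and by the choice of $T_{A,B}$,
\[
R_{\mc{M}}^+f(y)=\int_0^{T_{A,B}}f(\varphi_t(y))\,dt\qquad\text{for }y\in V,
\]
which is plainly $C^k$, resp.\ $H^k$, near $y_0$. Combined with the previous step this proves the two statements on all of $\mc{M}\setminus\mc{G}_-$.

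\emph{The $L^p$ bound.} For $f\in C^0(\mc{M})$ we have $|R_{\mc{M}}^+f(y)|\le\|f\|_{C^0(\mc{M})}\,\tau^+_{\mc{M}}(y)$ for $y\notin\Gamma_-$, and $\mu(\Gamma_-)=0$ by \eqref{muGamma}. The hyperbolicity of $\mc{K}$ forces an exponential escape estimate $\mu(\{\tau^+_{\mc{M}}\ge T\})\le Ce^{-cT}$ (see, e.g., \cite{Dyatlov:2016zh,Guillarmou:2017if}), whence $\tau^+_{\mc{M}}\in L^p(\mc{M})$ for all $p\in[1,\infty)$ and $\|R_{\mc{M}}^+f\|_{L^p(\mc{M})}\le\|\tau^+_{\mc{M}}\|_{L^p(\mc{M})}\,\|f\|_{C^0(\mc{M})}$. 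I expect the main obstacle to be the analysis near the forward trapped set $\Gamma_-$ in the second step, where $\tau^+_{\mc{M}}$ is infinite and only upper semicontinuous; the remaining assertions reduce to the smoothness of the geodesic flow and of $\tau^+_{\mc{M}}$ off the glancing flowout.
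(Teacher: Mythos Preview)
Your argument follows essentially the same approach as the paper's proof: the first and third lines come from the smoothness of $\tau^+_{\mc{M}}$ in~\eqref{regulariteTau}, the second and fourth from a uniform truncation time (the paper uses a single global $T$ depending on $d(\supp f,\Gamma_+)$, which is your $T_{A,\mc{M}}$), and the $L^p$ bound from the exponential escape estimate for hyperbolic trapped sets. Your write-up simply fills in more detail than the paper's terse ``follows similarly''.

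One small imprecision worth flagging: you take a compact neighbourhood $B$ of $y_0\in\Gamma_-\setminus\mc{G}_-$ with $B\cap\mc{G}_-=\varnothing$, but $\mc{G}_-$ is only known to be closed in $\mc{M}\setminus\Gamma_-$, so such a $B$ need not exist. This is harmless, since you can just take $B=\mc{M}$. More importantly, invoking ``upper semicontinuity of $\tau^+_{\mc{M}}$'' is the wrong direction for concluding $\tau^+_{\mc{M}}>T_{A,B}$ near $y_0$. The clean argument is direct: for $y_0\in\Gamma_-\setminus\mc{G}_-$ one checks that the forward orbit $\{\varphi_t(y_0):t>0\}$ lies in $\mc{M}^\circ$ (it cannot meet $\pl_0\mc{M}$ since $y_0\notin\mc{G}_-$, nor $\pl_+\mc{M}$ since $\tau^+(y_0)=\infty$, nor $\pl_-\mc{M}$ for $t>0$ by the sign of $X\rho$), so by continuity of the flow every $y$ in a small neighbourhood of $y_0$ in $\mc{M}$ satisfies $\varphi_{[0,T_{A,B}]}(y)\subset\mc{M}$. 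With this fix your proof is complete and matches the paper's.
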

\begin{proof} The continuity of $R_{\mc{M}}^\pm: C^k(\mc{M})\to C^k(\mc{M}\setminus (\mc{G}_\mp\cup \Gamma_\mp))$ 
 follows directly from the regularity \eqref{regulariteTau} of the map $\tau_{\mc{M}}^\pm$.  
The continuity  $R_{\mc{M}}^\pm: C_c^k(\mc{M}\setminus \Gamma_\pm)\to C^k(\mc{M}\setminus \mc{G}_\mp)$ follows similarly, using in addition that there exists $T>0$ depending only on $d(\supp(f), \Gamma_\pm)$ such that
$f(\varphi_{\pm t}(y))=0$ for all $y\in \mc{M}$ and $|\tau_{\mc{M}}^{\pm}(y)|\geq t\geq T$.
The continuity in $H^k$ is proved analogously.
For the last statement, it suffices to prove that $\tau_{\mc{M}}^\pm\in L^p(\mc{M})$ for all $p<\infty$. For large $T>0$, consider the set $V_{\mc{M}}(T)=\{y\in \mc{M}\ |\ \tau^+_{\mc{M}}(y)>T\}$. By the upper continuity of $\tau^+_{\mc{M}}$, for $T>0$ large $V_{\mc{M}}(T)$ is contained in a small neighborhood of $\Gamma_-$ and by 
\cite[Section 2.4]{Guillarmou:2017if}, we have $\mu(V_{\mc{M}}(T))=\mc{O}(e^{-\nu T})$ for some $\nu>0$, and thus $\tau^+_{\mc{M}}$ is in $L^p$. The same holds with $\tau_{\mc{M}}^-$.
\end{proof}

We denote by $n$ and by $\nu$ the respective inward-pointing unit normal vector 
fields to $\pl \mc{M}$ and $\pl M$ with respect to the Sasaki metric $G$ on $\mc{M}=SM$ and $g$ on $M$. As before, we denote by $\mu$ Liouville measure, 
and let $\mu_H$ be the Riemannian measure on $(\partial\mc{M},H=G|_{\pl \mc{M}})$. We also consider the measure $\mu_\nu$ on $\partial \mc{M}$ given by
\begin{align}
\label{e:measure_at_boundary}
d\mu_{\nu}(x,v):=|g(\nu,v)|\,d\mu_{H}(x,v).
\end{align}
\begin{lem}[Santalo's formula]\label{santalofor}
The X-ray transform $I$ extends as a bounded map $I:L^1(\mc{M},d\mu)\to L^1(\pl_-\mc{M}\cup \pl_0\mc{M},d\mu_\nu)$, and for all $f\in L^1(\mc{M},d\mu)$ we have
\begin{equation}\label{santalo}
\int_{\mc{M}}fd\mu= \int_{\pl_- \mc{M}}If\, d\mu_\nu.
\end{equation}
\end{lem}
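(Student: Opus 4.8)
The plan is to establish the identity first for a dense, well-behaved class of functions and then extend by density, using the $L^1$ bounds that follow from Lemma~\ref{outsideGamma}. First I would reduce to the case $f\in C_c^\infty(\mc{M}^\circ\setminus(\Gamma_-\cup\Gamma_+))$. For such $f$, the function $u:=R_{\mc{M}}^+f$ solves $-Xu=f$ on $\mc{M}^\circ$ with $u|_{\pl_+\mc{M}}=0$, and by Lemma~\ref{outsideGamma} (second line, with $\supp f$ compactly away from $\Gamma_\pm$) it is smooth on $\mc{M}\setminus\mc{G}_-$, and moreover bounded and supported away from $\Gamma_+$ (since an orbit through $\supp f$ spends only bounded forward time reaching $\pl_+\mc{M}$). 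The idea is then to integrate the identity $-Xu = f$ against the Liouville measure $d\mu = \tfrac1{(n-1)!}\alpha\wedge(d\alpha)^{n-1}$ and apply the divergence theorem. Since $X$ is divergence-free for $\mu$ (it is the Reeb field of $\alpha$), we have $\int_{\mc{M}}(Xu)\,d\mu = \int_{\pl\mc{M}} u\, i_X(d\mu)$, and one checks that $i_X\big(\tfrac1{(n-1)!}\alpha\wedge(d\alpha)^{n-1}\big)$ restricts on $\pl\mc{M}$ to $g(\nu,v)\,d\mu_H$ up to sign — this is the standard computation identifying the contracted contact volume with the measure $d\mu_\nu$ weighted by $\pm$ according to whether one is on $\pl_\pm\mc{M}$. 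Because $u$ vanishes on $\pl_+\mc{M}$, only the $\pl_-\mc{M}$ contribution survives, giving $\int_{\mc{M}} f\, d\mu = \int_{\pl_-\mc{M}} If\, d\mu_\nu$.

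The main technical obstacle is justifying the divergence theorem despite the singularity of $u=R_{\mc{M}}^+f$ along the glancing flowout $\mc{G}_-$ and the possible accumulation of geodesics near the trapped set. For the glancing set: by the last part of Lemma~\ref{outsideGamma} (or \eqref{santalo}'s own $L^1$ claim, proved first), $\tau^+_{\mc{M}}\in L^p$ for all $p<\infty$, so $u\in L^\infty$ with $u$ supported away from $\Gamma_+$, and $\mc{G}_-$ has $\mu$-measure zero (shown in the excerpt via Sard) and its trace on $\pl\mc{M}$ is contained in $\pl_0\mc{M}$, which is $\mu_H$-negligible in $\pl\mc{M}$; one then exhausts $\mc{M}$ by compact sets avoiding a shrinking neighborhood of $\mc{G}_-\cup\mc{K}$, applies Stokes on each, and passes to the limit using dominated convergence — the boundary terms over the artificial cut surfaces cancel in pairs (a point flows out of the excised tube and back in) or tend to zero as the tube shrinks, since the flux of $i_X(d\mu)$ through a hypersurface scales with its area and $\mu(\mc{G}_-)=0$. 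For the trapped set, hyperbolicity gives the exponential volume bound $\mu(V_{\mc{M}}(T))=\mc{O}(e^{-\nu T})$ cited in Lemma~\ref{outsideGamma}, which makes $\tau^+_{\mc{M}}$ integrable and lets the neighborhood of $\mc{K}$ be excised harmlessly.

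Having proved \eqref{santalo} on $C_c^\infty(\mc{M}^\circ\setminus(\Gamma_-\cup\Gamma_+))$, I would conclude by density: such functions are dense in $L^1(\mc{M},d\mu)$ (since $\mu(\Gamma_\pm)=0$ by \eqref{muGamma} and $\mu(\pl\mc{M})=0$), the right-hand side $f\mapsto \int_{\pl_-\mc{M}}If\,d\mu_\nu$ is continuous from $L^1(\mc{M},d\mu)$ to $\mathbb{R}$ once we know $I:L^1(\mc{M},d\mu)\to L^1(\pl_-\mc{M}\cup\pl_0\mc{M},d\mu_\nu)$ is bounded, and this boundedness together with $\|If\|_{L^1(d\mu_\nu)}\le \|f\|_{L^1(d\mu)}$ follows from \eqref{santalo} applied to $|f|$. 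Concretely one proves the $L^1\to L^1$ bound for $f\ge0$ in $C_c^\infty$ first (both sides of \eqref{santalo} are then finite and equal), extends $I$ by monotone convergence and density to all of $L^1(\mc{M},d\mu)$, and then the identity for general $f\in L^1$ follows by splitting into positive and negative parts and approximating. The harder part, as noted, is the limiting argument near $\mc{G}_-$: one must ensure the excised boundary contributions genuinely vanish, which rests on the measure-zero property of $\mc{G}_-$ and the coarea/flux estimate for $i_X(d\mu)$.
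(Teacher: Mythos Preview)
Your approach is correct and matches the paper's strategy: write $f=-Xu$ with $u=R^+_{\mc M}f$, apply Green's formula using that $X$ is divergence-free for $\mu$, identify the boundary term via $G(X,n)=g(v,\nu)$, and then extend to $L^1$ by density using the pointwise inequality $|If|\le I|f|$. The one place where the paper's argument is shorter is the choice of dense class. You work with $f\in C_c^\infty(\mc M^\circ\setminus(\Gamma_-\cup\Gamma_+))$ and then confront the singularity of $u=R^+_{\mc M}f$ along the glancing flowout $\mc G_-$, which forces the exhaustion and flux-limiting argument you describe. The paper instead takes $f\in C_c^\infty(\mc T)$ with $\mc T:=\mc M\setminus(\mc G_-\cup\Gamma_+\cup\Gamma_-)$; since $\mu(\mc G_-)=0$ this class is still dense in $L^1(\mc M,d\mu)$, and with $\mc G_-$ removed from the support the paper asserts that $u$ extends smoothly to all of $\mc M$, so Green's formula applies on $\mc M$ directly with no excision. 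The computation of the boundary density uses that the vertical bundle $\mc V=\ker d\pi_0$ is tangent to $\pl\mc M$, hence $G(n,\mc V)=0$ and $G(X,n)=g(v,\nu)$. Both routes land in the same place, but excising $\mc G_-$ from the support of $f$ at the outset is what lets the paper skip the limiting step you flag as the main obstacle.
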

\begin{proof}
Consider the open set $\mc{T}:=\mc{M}\setminus (\mc{G}_-\cup \Gamma_+\cup \Gamma_-)$, which has full $\mu$-measure in $\mc{M}$.
Each $f\in C_c^\infty(\mc{T})$ can be written as $f=-Xu$ in $\mc{M}=SM$, where $u:=R_{\mc{M}}^+f$. Notice that  $u$ extends as a smooth function in $\mc{M}$ satisfying $u|_{\pl_+\mc{M}}=0$ and $u|_{\pl_-\mc{M}\cup \pl_0\mc{M}}=If$. By Green's formula, we have
\begin{align}\label{e:Green_formula}
\int_{\mc{M}}f\,d\mu
= 
-\int_{\mc{M}}Xu\, d\mu
=
\int_{\pl_- \mc{M}}
u\, G(X,n)\,d\mu_{H}.
\end{align}
Since $\mc{V}=\ker d\pi_0$ is tangent to $\pl \mc{M}$, we have $G(n,\mc{V})=0$. Therefore
\begin{align*}
G(X(x,v),n(x,v))
=
g(d\pi_0(x,v)X(x,v),d\pi_0(x,v)n(x,v))
=
g(v,\nu(x)),
\end{align*}
which, together with~\eqref{e:Green_formula}, implies that~\eqref{santalo} holds for all $f\in C_c^\infty(\mc{T})$. Since $C_c^\infty(\mc{T})$ is dense in $L^1(\mc{M},d\mu)$, and since $|If|\leq I(|f|)$ pointwise, $I$ extends as a bounded operator 
$I:L^1(\mc{M},d\mu)\to L^1(\pl_-\mc{M}\cup \pl_0\mc{M},d\mu_\nu)$, and \eqref{santalo} holds for all  $f\in L^1(\mc{M},d\mu)$.
\end{proof}

\subsection{Regularity properties of the resolvent.}

The main result of this section is the following Livsic-type statement.
\begin{prop}\label{Livsic}
For $k\in \nn$, let $f\in H^k(\mc{M})\cap C^0(\mc{M})$ be a function satisfying $\pl_n^{j}f|_{\pl \mc{M}}=0$ for all $j=0,\dots,k-1$, where $\pl_n$ is the inner normal derivative at $\pl \mc{M}$. If $If=0$, then there exists a unique $u\in H^k(\mc{M})$ such that $u|_{\pl\mc{M}}=0$
and $-Xu=f$. Moreover $u=R^+_{\mc{M}}f=R^-_{\mc{M}}f$. If $f\in C^k(\mc{M})$, then 
$u\in C^{k}(\mc{M}\setminus (\mc{K}\cup \mc{G}))$ as well.
\end{prop}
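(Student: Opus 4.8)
The plan is to treat $If=0$ as the statement that $R^+_{\mathcal M}f$ and $R^-_{\mathcal M}f$ glue into a single distribution solving $-Xu=f$ with vanishing boundary trace, and then bootstrap the regularity of this $u$ using the resolvent regularity of Lemma~\ref{outsideGamma} together with a propagation/Livsic argument near the trapped set $\mathcal K$.

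\textbf{Step 1: construction and uniqueness of $u$.} Set $u^+:=R^+_{\mathcal M}f$ and $u^-:=R^-_{\mathcal M}f$. By \eqref{eqresolvent} both solve $-Xu^{\pm}=f$ in $\mathcal D'(\mathcal M^\circ)$, with $u^+|_{\pl_+\mathcal M}=0$ and $u^-|_{\pl_-\mathcal M}=0$. I would first show $u^+=u^-$ off the trapped set: for $y\notin\Gamma_-\cup\Gamma_+$ the geodesic through $y$ enters $\mathcal M$ at some $y_-\in\pl_-\mathcal M\cup\pl_0\mathcal M$ and exits at $y_+\in\pl_+\mathcal M\cup\pl_0\mathcal M$, and
\[
u^+(y)-u^-(y)=\int_{\tau^-_{\mathcal M}(y)}^{\tau^+_{\mathcal M}(y)}f(\varphi_t(y))\,dt = (If)(y_-)=0.
\]
So $u:=u^+=u^-$ is well-defined on $\mathcal M\setminus(\Gamma_-\cup\Gamma_+)$, which has full measure; since $u^\pm\in L^p(\mathcal M)$ for all $p<\infty$ by Lemma~\ref{outsideGamma}, this identity holds in $L^p$, hence $u\in L^p(\mathcal M)$ and $-Xu=f$ in $\mathcal D'(\mathcal M^\circ)$. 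Uniqueness is standard: if $Xu_1=Xu_2$ with zero boundary trace, then $X(u_1-u_2)=0$, so $u_1-u_2$ is constant along each orbit and vanishes at the boundary; in the non-trapping directions this forces $u_1=u_2$ a.e., and since $\mu(\Gamma_\pm)=0$ by \eqref{muGamma} this suffices.

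\textbf{Step 2: $H^k$ regularity.} Here the vanishing of the normal jet $\pl_n^{j}f|_{\pl\mathcal M}=0$ for $j<k$ is crucial: it guarantees that $f$ extends by zero across $\pl\mathcal M$ to an $H^k$ function on the enlarging manifold, so that (shrinking slightly) $f\in H^k_{\mathrm{comp}}(\mathcal M_1\setminus\Gamma_\pm)$ for a slightly larger $\mathcal M_1$. By Lemma~\ref{outsideGamma}, $R^\pm_{\mathcal M_1}f\in H^k_{\mathrm{loc}}(\mathcal M_1\setminus\mathcal G_\mp)$, and the argument of Step~1 identifies these with $u$ away from the glancing flowouts; since $\mu(\mathcal G_\pm)=0$ and $u^+$ is regular off $\mathcal G_-$ while $u^-$ is regular off $\mathcal G_+$, patching $u^+$ and $u^-$ gives $u\in H^k_{\mathrm{loc}}(\mathcal M\setminus(\mathcal G_+\cap\mathcal G_-\cap\ \overline{\Gamma}))$... and then one handles the remaining region near $\mathcal K$ separately. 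Concretely: away from a neighborhood $\mathcal C$ of $\mathcal K$ as in Lemma~\ref{nbhd_of_K}, the orbit of any point hits $\pl\mathcal M$ in bounded time, so the resolvent formula and the $H^k_{\mathrm{loc}}$ estimates of Lemma~\ref{outsideGamma} give $u\in H^k$ there; near $\mathcal K$ one uses the hyperbolicity of $\mathcal K$ and a Livsic-type / propagation-of-singularities argument for the transport operator $X$ (this is where the "Livsic" in the proposition name comes from) to upgrade $u$ from $L^p$ to $H^k$, exploiting that $-Xu=f\in H^k$ and that the stable/unstable splitting controls how singularities of $u$ can concentrate on $\mathcal K$. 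I would invoke the microlocal resolvent estimates near hyperbolic trapped sets from \cite{Dyatlov:2016zh} (via the modified vector field $X_0$ of Lemma~\ref{modifconley}) to get $u\in H^k$ microlocally near $\mathcal K$, then combine.

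\textbf{Step 3: $C^k$ regularity off $\mathcal K\cup\mathcal G$.} If $f\in C^k(\mathcal M)$, then on $\mathcal M\setminus(\mathcal K\cup\mathcal G)$ every orbit reaches $\pl\mathcal M$ forward or backward in finite time while staying away from $\mathcal G_\mp$ respectively; using $u=u^+$ near points whose forward orbit is not glancing-trapped and $u=u^-$ near points whose backward orbit is not, the first two continuity statements of Lemma~\ref{outsideGamma} ($R^\pm_{\mathcal M}:C^k(\mathcal M)\to C^k(\mathcal M\setminus(\mathcal G_\mp\cup\Gamma_\mp))$, $R^\pm_{\mathcal M}:C^k_c(\mathcal M\setminus\Gamma_\pm)\to C^k(\mathcal M\setminus\mathcal G_\mp)$) give $u\in C^k$ locally on $\mathcal M\setminus(\mathcal K\cup\mathcal G)$.

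\textbf{Main obstacle.} The delicate point is Step~2 near the trapped set: away from $\mathcal K$ everything reduces cleanly to Lemma~\ref{outsideGamma}, but proving that $u$ gains full $H^k$ regularity \emph{across} $\mathcal K$ requires the hyperbolic-trapping machinery — the anisotropic Sobolev/radial estimate framework of \cite{Dyatlov:2016zh} applied to the resolvent of $X_0$ — together with the observation that the wavefront set of $u$ over $\mathcal K$ must lie in $E_\pm^*$ (cf.\ \eqref{dualspa}), which then has to be excluded by the hypothesis $-Xu=f\in H^k$ and the boundary vanishing conditions. Matching these microlocal estimates with the explicit resolvent formulas away from $\mathcal K$, and dealing with the interplay between the glancing set $\mathcal G$ and the trapped set, is the technical heart of the argument.
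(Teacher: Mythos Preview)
Your overall strategy matches the paper's, and Steps~1 and~3 are fine. The genuine gap is in Step~2, at the glancing set $\mathcal G$. Patching $u^+=R^+_{\mathcal M}f$ and $u^-=R^-_{\mathcal M}f$ via Lemma~\ref{outsideGamma} on $\mathcal M$ only gives $u\in H^k_{\rm loc}$ off $(\mathcal G_-\cup\Gamma_-)\cap(\mathcal G_+\cup\Gamma_+)$, which still excludes $\mathcal G=\mathcal G_+\cap\mathcal G_-$ (and the cross-terms $\mathcal G_\mp\cap\Gamma_\pm$). Your fix of passing to $R^\pm_{\mathcal M_1}\tilde f$ is the right idea, but two points fail as written. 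First, when $\mathcal K\neq\varnothing$ the zero-extension $\tilde f$ is \emph{not} supported away from $\Gamma_\pm$, since $\Gamma_\pm\subset\mathcal M$; so the line of Lemma~\ref{outsideGamma} you invoke does not apply. Second, and more importantly, $R^\pm_{\mathcal M_1}\tilde f|_{\mathcal M}$ is not a priori equal to $u$: the $\mathcal M_1$-orbit of $y\in\mathcal M$ can exit and re-enter $\mathcal M$ several times before leaving $\mathcal M_1$, so $R^+_{\mathcal M_1}\tilde f(y)-u(y)$ is a sum of integrals of $f$ over the re-entering segments $\gamma_j\subset\mathcal M$. The ``argument of Step~1'' does not address this.

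The missing observation --- the paper's equation~\eqref{simplified} --- is that each such re-entering segment $\gamma_j$ has both endpoints on $\partial\mathcal M$, so $\int_{\gamma_j}f$ is a value of $If$ and vanishes by hypothesis. This is a second, essential use of $If=0$, distinct from Step~1. With it, near any $y\in\mathcal G$ one can write $u(y')=\int_0^{T_+(y')}\tilde f(\varphi_t(y'))\,dt$ for the smooth hitting time $T_+$ to a hypersurface $S_+\subset\mathcal N\setminus\mathcal M$ transverse to $X$, and $H^k$ (resp.\ $C^k$) regularity of $u$ across $\mathcal G$ follows from that of $\tilde f$. The paper then treats the region near $\mathcal K$ essentially as you outline: it uses the resolvents $R^\pm_{\mathcal U}(0)$ of the modified field $X_0$ on the Conley--Easton block $\mathcal U$ together with propagation of singularities to show $u$ is microlocally $H^k$ outside $E_-^*$ (forward resolvent) and outside $E_+^*$ (backward resolvent), hence everywhere since $E_-^*\cap E_+^*=0$.
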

\begin{proof}
First, we take $M_1$ as in Lemma \ref{l:no_conjugate_points} and we can assume that $d_g(\pl M_1,M)=2\eps>0$ for some $\eps>0$ small; we will denote $\mc{M}_1:=SM_1$.
Using the assumptions on $f$, we can extend $f$ by $0$ in $\mc{N}$ in a way that  the extension $\til{f}$ remains in $H^k(\mc{N})$.
Since $If=0$, we have that 
$R^{+}_{\mc{M}}f=R^{-}_{\mc{M}}f$ in $\mc{M}\setminus (\Gamma_+\cup \Gamma_-)$: indeed for each point $y\in \mc{M}\setminus (\Gamma_+\cup \Gamma_-)$, we have 
$R^{+}_{\mc{M}}f(y)-R^-_{\mc{M}}f(y)=If(\varphi_{\tau^{-}_{\mc{M}}(y)}(y))=0$. 
Moreover $R^+_{\mc{M}}f|_{\pl_-\mc{M}\cup \pl_0\mc{M}}=If=0$, thus 
$u:=R^+_{\mc{M}}f$ vanishes on the whole boundary $\pl \mc{M}$. We also have $-Xu=f$ by the choice of $u$.

Next, we can use Lemma \ref{outsideGamma} to deduce that $u$ is $H^{k}_{\rm loc}\cap C^0$ on $\mc{M}\setminus (\mc{K}\cup \mc{G})$ where $\mc{G}=\mc{G}_+\cap \mc{G}_-$. 
Also, $u$ is  clearly the unique such function solving $-Xu=f$ with $u|_{\pl \mc{M}}=0$.
Notice that $\mc{G}\cap \mc{K}=\varnothing$ by our assumption that $\mc{K}\cap \pl\mc{M}=\varnothing$. Let us show that in fact $u\in H^{k}(\mc{M}\setminus \mc{K})$. For a point $y\in \mc{G}$, 
there is $\eps>0$ small such that $y_+:=\varphi_{\tau_{\mc{M}_1}^+(y)-\eps}(y)\in 
\mc{N}\setminus \mc{M}$ and there is a small hypersurfaces $S_+\subset \mc{N}\setminus \mc{M}$ passing through $y_+$ such that $X$ is transverse to $S_+$ in a small neighborhood $O_+$ of 
$y_+$ in $S_+$. For $y'$ near $y$, denote by $T_+(y')>0$ the smallest time so that 
$\varphi_{T_+(y')}(y')\in S_+$, clearly $y'\mapsto T_{+}(y')$ is smooth in a small neighborhood of $y$ by the implicit function theorem. 
The neighborhood $O_+$ is contained in $\mc{N}\setminus \mc{M}$ and  we claim that for $y'$ near $y$
\begin{equation}\label{simplified} 
R_+^{\mc{M}}f(y')=\int_{0}^{\tau_{\mc{M}}^+(y')}f(\varphi_t(y'))dt=\int_0^{T_+(y')}\til{f}(\varphi_t(y'))dt.
\end{equation}
Indeed, the integral curve 
\[\gamma(y'):=\bigcup_{\tau^{\mc{M}}_+(y')\leq t\leq T_+(y'))}\varphi_{ t}(y')\cap \mc{M}\]
is either empty and then the statement is obvious, or it is a disjoint union of geodesic segments
$(\gamma_j)_{j\in J}$ in $\mc{M}$ with endpoints on $\pl \mc{M}$, 
in which case for each $j$ we get $\int_{\gamma_j}f=0$ since $If=0$. This shows \eqref{simplified} and we conclude from the expression \eqref{simplified} of $u$ and the fact that $\til{f}\in H^k(\mc{N})$ 
that $u$ is in $H^k$ near $y$. Therefore $u\in H^{k}(\mc{M}\setminus \mc{K})$. The same argument also shows that if $f\in C^k$, then $u\in C^{k}(\mc{M}\setminus (\mc{K}\cup \mc{G}))$.

The last part consists in analyzing the regularity near $\mc{K}$, using propagation of singularities and the results of \cite{Dyatlov:2016zh, Guillarmou:2017if}. First, we split $f=f_1+f_2$ with $\supp(f_1)\cap \Gamma_+=\varnothing$ and $f_2$ supported in a small neighborhood of $\Gamma_+$,  $f_i$ having the same regularity as $f$. 
From the definition of $u_1:=R_{\mc{M}}^+f_1$ and the flow-invariance of $\Gamma_+$, we have $\supp(u_1)\cap \Gamma_+=\varnothing$. Therefore to study the regularity of $u=R_{\mc{M}}^+f$ near $\mc{K}$, its suffices to study the regularity of $u_2:=R_{\mc{M}}^+f_2$ near $\mc{K}$. We will show that, near $\mc{K}$, $u_2$ is microlocally $H^k$ everywhere except at $E_-^*$ (defined in \eqref{dualspa}). Here and below we use the following usual terminology: a function $f$ is microlocally $H^s$ in a conic open set $W$ of $T^*\mc{M}$ if for each pseudo-differential operator $A\in \Psi^0(\mc{M})$ with microsupport contained in $W$, $Af\in H^s(\mc{M})$.

Let $\mc{U}\subset \mc{M}$ be the subset of Proposition \ref{t:conley} and 
$\mc{C}\subset \mc{U}$ be the neighborhood of $\mc{K}$ obtained from Lemma \ref{nbhd_of_K}.
We also consider a neighborhood $\mc{C}'\subset \mc{C}$ of $\mc{K}$ where  
 the vector field $X_0$ in Lemma~\ref{modifconley} is equal to $X$ and the flow lines of 
 $X$ in $\mc{U}$ intersecting $\mc{C}'$ are contained in the set where $X_0=X$. 
 This is possible since there is a neighborhood of $\Gamma_\pm(\mc{U})$ where $X_0=X$ by Lemma~\ref{modifconley}, we shall just write $\varphi_t$ instead of $\phi_t$ for the flow of $X_0$ in those regions.
We can then assume that $f_2$ is supported in the set $\{y\in \mc{M}\ |\ \exists t\geq 0, \varphi_{-t}(y)\in \mc{C}', \varphi_{[-t,0]}(y)\in \mc{M}\}$ by choosing the support of $f_2$ close enough to $\Gamma_+$. We
then split $f_2=f_3+f_4$ where $\supp(f_3)\subset \mc{C}'$ and
$\varphi_{-T}(\supp(f_4))\subset \mc{C}'$ for some large $T>0$. Define 
$u_j:=R_{\mc{M}}^+f_j$ for $j=3,4$.  
Due to the property of $\mc{C}$ in Lemma \ref{nbhd_of_K} and the support of $f_3$, we have for $y\in \mc{U}\setminus \Gamma_-$
\[ u_3(y)=\int_{0}^{\tau_{\mc{U}}^+(y)}f_3(\varphi_t(y))dt\] 
where $\tau_{\mc{U}}^+(y)$ is defined as  $\tau_{\mc{M}}^+(y)$ but with $\mc{U}$ replacing $\mc{M}$. Notice then that $u_3=0$ in a neighborhood of $\pl_+\mc{U}$.
The first-order operator $X_0$ of Corollary~\ref{modifconley} satisfies the assumptions of the paper \cite{Dyatlov:2016zh} in the set $\mc{U}$. Therefore, by \cite{Dyatlov:2016zh}, 
it has two resolvents $R^\pm_{\mc{U}}(\la):C_c^\infty(\mc{U})\to \mc{D}'(\mc{U})$ solving 
$(-X_0\pm \la)R_{\mc{U}}^\pm (\la)={\rm Id}$ that are meromorphic 
in $\la \in \cc$, analytic in ${\rm Re}(\la)>C$ for some $C\geq 0$ and 
given in $\mc{U}$ by the converging expression
\[R_{\mc{U}}^\pm (\la)f_3(y)=\int_{0}^{\tau_{0,\mc{U}}^\pm(y)}e^{\mp \la t}f(\phi_t(y))dt.\] 
Moreover, as in Section 4.2 of \cite{Guillarmou:2017if}, the operators $R_{\mc{U}}^\pm (\la)$ are actually analytic in ${\rm Re}(\la)\geq 0$ and due to the support condition of $f_3$, for $y\in \mc{U}\setminus \Gamma_{-}$ 
\[R_{\mc{U}}^{+}(0)f_3(y)=\int_{0}^{\tau_{\mc{U}}^+(y)}f_3(\varphi_t(y))dt=u_3(y).\] 
By \cite[Proposition 4.3]{Guillarmou:2017if} (or \cite{Dyatlov:2016zh}, Proposition 6.1) applied to the vector field  $X_0$, we deduce that there is a neighborhood $Q_+$ of $E_+^*$ in $T^*\mc{U}$, conic 
in the fibers of $T^*\mc{U}$, such that $u_3$  is microlocally in $H^k(\mc{U})$ in $Q_+$. We also know by ellipticity that $u_3$ is microlocally $H^k$ outside the characteristic set $\{\xi \in T^*\mc{U}\ |\  \xi(X_0)=0\}$ of $X_0$. We next use propagation of singularities to prove that actually $u_3$ is microlocally $H^k$ outside $E_-^*$: by \cite[Lemma 2.10]{Dyatlov:2016zh}, for each $(y,\xi)\notin E_-^*$ with $y\in \mc{U}, \xi(X_0)=0$, either $\phi_{t}(y)\in \pl_-\mc{U}$ for some $t\geq 0$ or $\Phi_{t}(y,\xi):=(d\phi_{t}(y)^{-1})^T.\xi$ belongs to a small neighborhood of $E_+^*$ for some $t\geq 0$, thus since $u_3$ is microlocally $H^k$ in those regions of $T^*\mc{U}$, 
the propagation of singularities (see e.g. \cite[Lemma 3.2]{Dyatlov:2016zh}) gives us that $u_3$ is microlocally $H^k$ outside $E_-^*$ over $\mc{U}$.
Let us next consider the regularity of $u_4$. First, it is direct to check that in $\mc{M}$
\[  \varphi_{-T}^*R_{\mc{M}}^+(\varphi_T^*f_4)=R_{\mc{M}}^+f_4\]
and therefore, since ${\rm supp}(\varphi_T^*f_4)\subset \mc{C}'$, we can write 
\[ u_4=\varphi_{-T}^*{\bf }R_{\mc{U}}^+(0)(\varphi_T^*f_4).\]
Now, as for $u_3$, we have that ${\bf }R_{\mc{U}}^+(0)(\varphi_T^*f_4)$ is microlocally $H^k$ outside $E_-^*$ in $\mc{U}$, and by Egorov theorem (or again \cite[Lemma 3.2]{Dyatlov:2016zh}) we obtain that $u_3$ is microlocally $H^k$ outside $E_-^*$ over $\mc{M}$. We have thus proved that $u_2=u_3+u_4$ is microlocally $H^k$ outside $E_-^*$ over $\mc{U}$, and thus the same holds for $u$. 

To conclude the argument, we repeat the argument by splitting $f=f_1'+f'_2$ as above, except that $\supp(f_1')\cap \Gamma_-=\varnothing$ and $f_2'$ is supported in a small neighborhood of $\Gamma_-$, and we use the resolvent $R_{\mc{U}}^-(0)$ instead of 
$R_{\mc{U}}^+(0)$: since, by \cite[Proposition 4.3]{Guillarmou:2017if}, for each $h\in H_{\rm comp}^k(\mc{U})$, 
$R_{\mc{U}}^-(0)h$ is microlocally in $H^k$ in a conic neighbohood $Q_-$ of $E_-^*$, the same argument as above with reverse time shows that $u$ is microlocally $H^k$ outside $E_+^*$ over $\mc{U}$. Since $E_-^*\cap E_+^*=0$, we have thus shown that $u|_{\mc{U}}$ is in $H^k(\mc{U})$. 
\end{proof}

\subsection{Injectivity of the X-ray transforms: proof of Theorem \ref{th4}}

For each $m\in \nn_0$, we consider the map
\begin{align}\label{e:pi_m}
 \pi_m:SM\to\otimes_S^m TM,
 \qquad
 \pi_m(q,v)=v^{\otimes m}=v\otimes...\otimes v.
\end{align}
Notice that for $m=0$ this map is simply the base projection of the unit tangent bundle $\pi_0:SM\to M$.  The X-ray transform on $m$-tensors, which we defined in~\eqref{e:I_m}, is given by the composition $I_m:=I\circ\pi_m^*:C^0(M; \otimes_S^m T^*M)\to L^1(\pl_-SM\cup \pl_0SM,d\mu_\nu)$.

We now recall a theorem due to Sharatfudinov \cite[Theorem~1.3]{Sharafutdinov:2002ss} on the boundary determination of tensors in the kernel of $I_m$; since our setting is slightly more general than the one of \cite{Sharafutdinov:2002ss}, we will add a detailed proof for the reader's convenience. We first recall a few well known properties of Jacobi fields in compact Riemannian manifolds $(M,g)$ without conjugate points (see, e.g.,  \cite{Carmo:1992ye}). Given a geodesic $\gamma:[0,\ell]\to M$ joining $x:=\gamma(0)$ and $y:=\gamma(\ell)$, a Jacobi field $J(t)$ along $\gamma$ is completely determined by its values $J(0)$ and $J(\ell)$. Namely, there is a linear isomorphism of $T_xM\oplus T_yM$ onto the space of Jacobi fields along $\gamma$, given by $(J(0),J(\ell))\mapsto J$. This isomorphism depends smoothly on $\gamma$. In particular, fixed $\ell>0$, there exists a constant $c_{\ell}>0$ such that, for each geodesic $\gamma:[0,\ell]\to M$ of length at most $2\ell$ and for every Jacobi field $J$ along $\gamma$, we have
\begin{align*}
\|J(t)\|_g + \|\nabla_t J(t)\|_g \leq c_\ell \big( \|J(0)\|_g + \|J(\ell)\|_g \big),
\qquad
\forall t\in[0,\ell],
\end{align*}
where $\nabla_t$ denotes the Levi-Civita covariant derivative. Moreover, $g(J,\dot\gamma)\equiv0$ provided $g(J(0),\dot\gamma(0))=g(J(\ell),\dot\gamma(\ell))=0$.
\begin{lem}\label{lemsharaf}
Assume that $(M,g)$  has no conjugate points, 
and that $\mu_\nu(\Gamma_-\cap \pl SM)=0$.
If $f\in C^{m+1}(M;\otimes_S^m T^*M)$ satisfies $I_mf=0$, then there exists $p\in C^2(M;\otimes_S^{m-1}T^*M)$ such that $p|_{\pl M}\equiv0$ and $(f-Dp)|_{\pl M}\equiv 0$. In particular, if $m=0$, then $f|_{\pl M}=0$.
\end{lem}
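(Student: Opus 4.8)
The plan is to recover the Taylor expansion of $f$ at a boundary point by iteratively differentiating the condition $I_mf=0$ and exploiting the freedom to modify $f$ by $Dp$. Fix a boundary point $x_0\in\pl M$ and boundary normal coordinates near $x_0$, with $\rho$ a boundary defining function. First I would observe that, since the statement is about the jet of $f$ at $\pl M$ modulo $D$ of a tensor vanishing at $\pl M$, we may work inductively on the order of vanishing: suppose we have already found $p_{k-1}\in C^{m-1}$ vanishing to high order at $\pl M$ such that $f-Dp_{k-1}$ vanishes to order $k-1$ at $\pl M$; we want to correct it to order $k$. The key point is that vanishing to order $k$ at $\pl M$ is equivalent to the vanishing of all components $\rho^{-(k)}\partial^\alpha(f-Dp_{k-1})|_{\pl M}$ for $|\alpha|\le k$ in the normal direction, and the off-diagonal (tangential) derivatives are already controlled by the induction; so the only obstruction is a single normal-jet component.

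The main mechanism is the following. For $(x,v)\in\pl_0SM$, i.e. $v$ tangent to $\pl M$ at $x$, the geodesic $\gamma_{x,v}$ is tangent to the boundary, and one can estimate $I_m f(x,v')$ for $v'$ near such a glancing direction. Concretely, for an interior-pointing $v'$ close to a tangential $v$, the geodesic $\gamma_{x,v'}$ stays within distance $O(\text{dist}(v',\text{tangential})^2)$ of $\pl M$ for a time of order $\text{dist}(v',\text{tangential})$, by the no-conjugate-points Jacobi field bounds recalled just above (these give uniform control of how the geodesic separates from the boundary). Plugging the Taylor expansion of $f$ into $I_m f(x,v')=0$ and extracting the leading term in the small parameter $\epsilon:=$ (angle to tangential) isolates $\partial_\rho^{k}$-type derivatives of $f$ restricted to $\pl M$, paired against $v^{\otimes m}$ where $v$ is tangential. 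Running this over all tangential $v$ and all boundary points shows that a specific symmetrization of the normal $k$-jet of $f$ vanishes; the residual ambiguity is exactly a term of the form $D(\rho^k q_{k})$ for a suitable $(m-1)$-tensor $q_k$, which we absorb into $p_k:=p_{k-1}+\rho^k q_k$. This is essentially the argument of Stefanov–Uhlmann and Sharafutdinov adapted to the non-convex, possibly-glancing setting; the hypothesis $\mu_\nu(\Gamma_-\cap\pl SM)=0$ guarantees that glancing directions from which the above estimates are valid are dense enough in $\pl SM$ near $x_0$, so the vanishing conclusions hold on all of $\pl M$.

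For $m=0$ this degenerates to the classical boundary determination: $I_0 f(x,v')=\int_0^{\tau^+}f(\gamma_{x,v'}(t))\,dt=0$ for $v'$ near tangential, and dividing by the (small) exit time and letting $v'\to v$ tangential forces $f(x)=0$, i.e. $f|_{\pl M}=0$, with no corrector needed.

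The hard part will be making the leading-order asymptotic analysis of $I_m f(x,v')$ near a glancing direction rigorous in the presence of the discontinuities of $\tau^+_g$ and without convexity: one must control the geodesic $\gamma_{x,v'}$ uniformly as $v'$ approaches tangential — both that it does enter $M^\circ$ and that it re-exits in time $O(\epsilon)$ staying $O(\epsilon^2)$-close to $\pl M$ — using only the no-conjugate-points Jacobi estimates and the second fundamental form of $\pl M$, and to do this uniformly over a full-measure (hence dense) set of boundary directions so that the pointwise vanishing conclusions propagate to all of $\pl M$ by continuity of $f$ and of the corrector. Keeping the corrector $p$ in class $C^2$ while iterating the construction (summing the $\rho^k q_k$ with enough smoothness) is the bookkeeping obstacle; since we only need finitely many orders — $f\in C^{m+1}$ and we want agreement to order $1$ at $\pl M$ after subtracting $Dp$ with $p\in C^2$ — the iteration is short and the regularity loss is affordable.
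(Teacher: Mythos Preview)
Your proposal has a genuine gap. The short-geodesic, near-glancing asymptotic you rely on --- that for $v'$ close to a tangential $v$ the geodesic $\gamma_{x,v'}$ stays within $O(\epsilon^2)$ of $\pl M$ and exits in time $O(\epsilon)$ --- is the standard convexity mechanism, and it simply fails in this paper's setting where $\pl M$ is allowed to be non-convex. At a strictly concave boundary point, a geodesic starting nearly tangentially enters $M^\circ$ and may travel a long distance (of order $1$, not $O(\epsilon)$) before returning to $\pl M$; there is no smallness to divide by, so your $m=0$ argument ``divide by the small exit time and let $v'\to v$'' does not go through. The Jacobi field bounds recalled just above the lemma control Jacobi fields along a fixed geodesic segment; they say nothing about how a family of geodesics interacts with $\pl M$, and in particular they do not supply the near-boundary confinement you invoke.

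The paper's proof avoids short-time asymptotics entirely. It first makes a single (non-iterative) correction: by a known lemma there is $p\in C^2$ with $p|_{\pl M}=0$ and $\iota_\nu(f-Dp)=0$, reducing to the claim that $f_{x_0}(v_0,\dots,v_0)=0$ for all tangential $(x_0,v_0)\in\pl_0SM$. For this, one picks $(x,v)\in\pl_-SM\setminus\Gamma_-$ within $\epsilon$ of $(x_0,v_0)$ (density coming from $\mu_\nu(\Gamma_-\cap\pl SM)=0$), lets $\gamma:[0,1]\to M$ be the \emph{full} geodesic to its other boundary endpoint $y$, builds a one-parameter family $\gamma_s$ with $\gamma_s(1)=y$ fixed and $\gamma_s(0)\in\pl M$, and differentiates $\int_0^1 f_{\gamma_s}(\dot\gamma_s,\dots,\dot\gamma_s)\,dt=0$ at $s=0$. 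Decomposing the resulting Jacobi field as $\lambda(1-t)\dot\gamma+N$ with $N\perp\dot\gamma$ small (this is where the no-conjugate-points Jacobi estimate is actually used), the main term integrates by parts to $-\lambda f_x(v,\dots,v)$ and the remainder is $O(\epsilon)$, giving $|f_x(v,\dots,v)|\leq c\epsilon$. Thus the key idea is differentiation of $I_mf=0$ along a transversal variation of full geodesics, not asymptotics of $I_mf$ along short geodesics.
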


\begin{proof}
The inner product with the inward pointing unit normal vector $\nu$ to $\partial M$ is the map
\begin{align*}
 \iota_\nu:C^1(M;\otimes_S^m T^*M)\to C^1(\pl M;\otimes_S^{m-1} T^* M|_{\partial M}),
 \qquad
 \iota_\nu f=f(\nu,\cdot,...,\cdot).
\end{align*}
It is well known (see e.g.\ \cite[Lemma 2.2.]{Sharafutdinov:2002ss}) that for every $f\in C^{m+1}(M;\otimes_S^m T^*M)$ there exists $p\in C^2(M;\otimes_S^{m-1} T^*M)$ such that $p|_{\partial M}\equiv0$ and $\iota_{\nu}(f-Dp)\equiv0$. Notice that $I_m(f-Dp)=I_m(f)$. Therefore, it is enough to prove that each tensor $f\in C^1(M;\otimes_S^m T^*M)$ such that $I_mf=0$ and $\iota_\nu f\equiv0$ must satisfy $f_{x_0}(v_0,...,v_0)=0$ for all $(x_0,v_0)\in \pl SM$. From now on, let us consider one such $f$.

Equation~\eqref{e:measure_at_boundary} readily implies that, when restricted to $\pl_-SM$, the measure $\mu_H$ induced by $H=G|_{\pl SM}$ is absolutely continuous with respect to $\mu_\nu$. In particular, since $\mu_\nu(\Gamma_-\cap \pl_- SM)=0$, we also have $\mu_H(\Gamma_-\cap \pl_- SM)=0$. This implies that the  $\pl_-SM\setminus \Gamma_-$ is dense in $\pl_-SM$, and therefore is dense in $\pl_-SM\cup \pl_0SM$ as well. We denote by $d:SM\times SM\to[0,\infty)$ the distance induced by $G$ on $SM$. The above density implies that, for each $(x_0,v_0)\in \pl_0SM$ and  $\epsilon\in(0,1)$, 
there exists $(x,v)\in \pl_-SM\setminus \Gamma_-$ such that $d((x,v),(x_0,v_0))<\epsilon$. To prove the lemma, it is enough to show that $|f_x(v,...,v)|\leq c\epsilon$ for some  $c>0$ independent of $(x,v)$.

Let $\gamma:[0,\ell]\to M$ be the unit speed geodesic such that $\gamma(0)=x$, $\dot\gamma(0)=v$, $y:=\gamma(\ell)\in\partial M$, and $\gamma(t)\in M^\circ$ for all $t\in(0,\ell)$. For notational convenience, up to replacing $g$ and $v$ by $\ell^{-2}g$ and $\ell v$ respectively, we can assume that $\ell=1$. Consider a smooth curve $s\mapsto w_s$ in $T_yM$ such that $w_0=-\dot\gamma(1)$. Since $y\in \pl M$, the geodesic $t\mapsto\exp_y(t w_s)$ is well defined at least for short positive time whenever $g(w_s,\nu_y)> 0$. Since the geodesic $\gamma(t):=\exp_y((1-t)w_0)$ intersects $\partial M$ transversely at $t=0$, and since $g(w_0,\nu_y)\geq 0$, by the implicit function theorem we can find a smooth curve $s\mapsto w_s$ as above such that:
\begin{itemize}
\item $g(w_s,\nu_y)> 0$ and $x_s:=\exp_y(w_s)\in\partial M$ for $s\neq0$,
\item $v_0':=\partial_s x_s|_{s=0}\in S_x\partial M$ satisfies $d((x,v_0'),(x_0,\delta v_0))<\epsilon$ for some $\delta\in\{1,-1\}$.
\end{itemize}
Up to replacing $v_0$ by $-v_0$, we can assume that $\delta=1$, and thus  $d((x,v_0'),(x,v))<2\epsilon$.

Now, we consider the family of geodesics $\gamma_s:[0,1]\to M$, $\gamma_s(t):=\exp_y((1-t)w_s)$, and the Jacobi field along $\gamma=\gamma_0$ given by
$J(t)
=
\partial_s \gamma_s(t) |_{s=0}$, 
which satisfies $J(0)=v_0'$ and $J(1)=0$. Notice that we can uniquely write $v_0'=\lambda v+n$, where $g(n,v)=0$, $\lambda\in(1-2\epsilon^2,1]$, and $\|n\|_g< 2\epsilon$. According to this splitting, we can write $J(t)=\lambda T(t) + N(t)$, where $T(0)=v=\dot\gamma(0)$, $N(0)=n$, and $T(1)=N(1)=0$. This readily implies that $g(N,\dot\gamma)\equiv0$, $\|N\|_g+\|\nabla_tN\|_g\leq c_1 \|n\|_g\leq c_1 2\epsilon$, and $T(t)=(1-t)\dot\gamma(t)$.

Since $f\in\ker(I_m)$, for each $s$ we have
\begin{align*}
\int_0^1 f_{\gamma_s(t)}(\dot\gamma_s(t),...,\dot\gamma_s(t))\,d t =0.
\end{align*}
By differentiating the above equality with respect to $s$ at $s=0$, we obtain
\begin{align}
\nonumber
 0 & = \int_0^1 \Big(\nnabla f_{\gamma(t)}(\partial_s\gamma_s(t)|_{s=0},\dot\gamma(t),...,\dot\gamma(t)) + m\,f_{\gamma(t)}(\nnabla_s\dot\gamma_s(t)|_{s=0},\dot\gamma(t),...,\dot\gamma(t))\Big)\,d t \\
\nonumber  & = \int_0^1 \Big(\nnabla f_{\gamma(t)}(J(t),\dot\gamma(t),...,\dot\gamma(t)) + m\,f_{\gamma(t)}(\nabla_t J(t),\dot\gamma(t),...,\dot\gamma(t))\Big)\,d t \\
 \label{e:integral_1}
  & = \int_0^1 \Big( \lambda(1-t)\nnabla f_{\gamma(t)}(\dot\gamma(t),...,\dot\gamma(t)) -  m\lambda f_{\gamma(t)}(\dot\gamma(t),...,\dot\gamma(t)) \Big)\,d t\\
 \label{e:integral_2} & \quad + \int_0^1 \Big( \nnabla f_{\gamma(t)}(N(t),\dot\gamma(t),...,\dot\gamma(t)) + m\, f_{\gamma(t)}(\nabla_t N(t),\dot\gamma(t),...,\dot\gamma(t))\Big)\,d t.
\end{align}
Here, $\nabla:C^1(M;\otimes^{m}T^*M)\to C^0(M;\otimes^{m+1}T^*M)$ denotes the Levi-Civita covariant derivative on tensors, whereas as before $\nabla_t$ denotes the Levi-Civita covariant derivative on vector fields along $\gamma$. The integral in~\eqref{e:integral_1} is equal to
\begin{align*}
&\int_0^1 \Big( \lambda(1-t)\nnabla f_{\gamma(t)}(\dot\gamma(t),...,\dot\gamma(t)) -  m\lambda f_{\gamma(t)}(\dot\gamma(t),...,\dot\gamma(t)) \Big)\,d t\\
&=
\int_0^1  \lambda(1-t)\nnabla f_{\gamma(t)}(\dot\gamma(t),...,\dot\gamma(t))d t=
\int_0^1  \lambda(1-t)\tfrac{d}{d t}f_{\gamma(t)}(\dot\gamma(t),...,\dot\gamma(t))d t
\\ 
&=
-\lambda f_x(v,...,v)
+
\int_0^1  \lambda\, f_{\gamma(t)}(\dot\gamma(t),...,\dot\gamma(t))d t=
-\lambda f_x(v,...,v).
\end{align*}
The absolute value of the integral in~\eqref{e:integral_2} has norm bounded 
above by 
\[\begin{gathered}
 \|\nabla f \|_{L^\infty} \|N\|_{L^\infty} + m\|f\|_{L^\infty} \|\nabla_t N\|_{L^\infty}\leq  \big(\|\nabla f\|_{L^\infty}  + m\|f\|_{L^\infty}\big) c_1 2\epsilon.
\end{gathered}\]
All together, we obtain the desired estimate
\begin{align*}
|f_x(v,...,v)|
\leq
\lambda^{-1}  \big(\|\nabla f\|_{L^\infty}  + m\|f\|_{L^\infty}\big) c_1 2\epsilon
\leq
(1-2\epsilon^2)^{-1}  \big(\|\nabla f\|_{L^\infty}  + m\|f\|_{L^\infty}\big) c_1 2\epsilon.
\end{align*}
\end{proof}

\begin{proof}[Proof of Theorem \ref{th4}]
Let us recall the Pestov identity on $SM$ as written in \cite{Paternain:2015ud}. We decompose the gradient a function $u\in C^1(SM)$ with respect to the Sasaki metric $G$ as
\begin{align*}
 \nabla u
 =
 (Xu)X +\nablav u+\nablah u,
\end{align*}
where $\nablav u\in \mc{V}$ and $\nablah u\in \mc{H}$. We recall that we can identify both the vertical and horizontal subbundles $\mc{V}$ and $\mc{H}$ with the vector bundle $\mc{Z}$. Given a $C^1$-section $w$ of $\mc{Z}$, we write $Xw$ for its covariant derivative along geodesic flow lines, i.e.
\begin{align*}
Xw(x,v)= \nabla_t ( w\circ\varphi_t(x,v))|_{t=0}.
\end{align*}
We denote by $\mc{R}_{x,v}:\mc{Z}_{x,v}\to \mc{Z}_{x,v}$ the operator defined by 
$\mc{R}_{x,v}w=\mc{R}(w,v)v$ where $\mc{R}$ is the Riemann curvature tensor of $g$. Then Pestov identity is \begin{equation}\label{Pestov}
\|\nablav Xu\|_{L^2}^2-\|X\nablav u\|_{L^2}^2 - (n-1)\|Xu\|^2_{L^2}+\cjg \mc{R}\nablav u,\nablav u\cjd_{L^2}=0\end{equation}
for all $u\in C^\infty(M)$ with $u|_{\pl M}=0$. 

In order to prove (i), we use Lemma \ref{lemsharaf} together with Proposition \ref{Livsic}: there is $u\in H_0^1(SM)$ such that $Xu=\pi_0^*f$ and $u|_{\pl SM}=0$. In particular,  $\nablav Xu=0\in H^1(SM)$ and $Xu\in H_0^1(SM)$.
Thus, by \cite[Lemma E.47]{Dyatlov:2017tx}, there is a sequence of smooth functions $u_j$ vanishing at $\pl SM$ such that $u_j\to u$ in $H_0^1(SM)$ 
such that $Xu_j\to Xu=\pi_0^*f$ in $H^1(SM)$. Using that 
$X\nablav u=\nablav Xu-\nablah  u=-\nablah  u\in L^2$, we also have the convergence in $L^2$
\[X\nablav u_j=\nablav Xu_j-\nablah  u_j\to  \nablav Xu-\nablah  u=-\nablah  u\] 
and therefore taking the limit for $j\to \infty$ in \eqref{Pestov} (applied to $u_j$), we obtain
\begin{equation}\label{pestovfinal}
0= \|\nablah u\|_{L^2}^2+(n-1)\|Xu\|^2_{L^2}-\cjg \mc{R}\nablav u,\nablav u\cjd_{L^2}.\end{equation}
We can now use that $f\in C^1(M)$ to deduce that $u\in C^1(SM\setminus (\mc{K}\cup \mc{G}))$  by Proposition \ref{Livsic}. This implies that $Y:=\nablav u$ is $C^0$ on each non-trapped and non-glancing geodesic in $SM$, and since $XY=-\nablah u$ is also $C^0$ on such geodesic, we conclude that $Y$ is a $C^1$ vector field along such geodesic. The bad set $\mc{B}:=\Gamma_+\cup \Gamma_-\cup 
\mc{G}$ has measure $\mu(\mc{B})=0$, and since $Y|_{\pl SM}=0$ we get using Santalo formula \eqref{santalo} and the fact that 
$\mu(\Gamma_\pm)=\mu(\mc{G}_\pm)=0$ 
\[\|XY\|_{L^2}^2-\cjg \mc{R}Y,Y\cjd_{L^2}=
\int_{\pl_-SM\setminus \mc{B}}I(F(Y))d\mu_\nu
\]
where $F(Y):=|XY|_g^2-\cjg \mc{R}Y,Y\cjd_g\in C^0(SM\setminus \mc{B})$ and $I$ is the X-ray transform. Notice that 
$I(F(Y))(x,v)$ is the index form of the vector field $Y$ along the geodesic $t\mapsto \exp_x(tv)$. 
If $g$ has no pair of conjugate points in $M$, then $I(F(Y))\geq 0$ everywhere on $\pl_-SM$, thus $\|\pi_0^*f\|_{L^2}^2=\|Xu\|^2_{L^2}=0$ by using \eqref{pestovfinal}.

To show (ii), we use a similar argument: by Lemma \ref{lemsharaf} we first reduce to 
the case $\iota_{\nu}f=0$ and $f|_{\pl M}=0$ and we apply Proposition \ref{Livsic} to find 
$u\in H_0^1(SM)$ so that $Xu=\pi_1^*f$.
Since $f$ is a $1$-form, we have 
\[\|\nablav Xu\|^2_{L^2}=(n-1)\|Xu\|_{L^2}^2\] 
and thus from Pestov identity \eqref{Pestov} and an appoximation argument as above,
\begin{equation}\label{conseqpestov}
0=\|X\nablav u\|_{L^2}^2 -\cjg \mc{R}\nablav u,\nablav u\cjd_{L^2}
\end{equation}
We have $Y:=\nablav u\in L^2(SM)\cap C^0(SM\setminus \mc{B})$, since $u\in H^1(SM)\cap C^1(SM\setminus \mc{B})$. Moreover, $Y$ 
satisfies $XY=\nablav \pi_1^*f-\nablah u\in C^0(SM\setminus \mc{B})\cap L^2(SM)$ and $Y|_{\pl SM}=0$. Thus the restriction of $Y$ along each non-trapped/glancing geodesic 
is a $C^1$-vector field. By \eqref{conseqpestov} and Santalo's formula, we get that the index form $I(F(Y))$ vanishes 
along all non-trapped and non-glancing geodesics. Since $g$ has no conjugate points, the index form along a non-trapped geodesic $\gamma$ is positive definite on the space of $C^1$ vector field 
vanishing at $\pl M$, thus $Y=0$ along $\gamma$, and thus $Y=0$ on $SM$.
 Then $u=\pi_0^*q$ for some $q\in H_0^1(M)$ and $f=dq$ (since $X\pi_0^*q=\pi_1^*dq$).
 In particular we also get $q\in C^2(M)$ since $f\in C^1(M;T^*M)$. 
 
The case (iii) of tensors is proved as in \cite[Sections 9 and 11]{Paternain:2015ud}: once we 
know that $Xu=\pi_m^*f$ for some $u\in H_0^1(SM)$ we can use Pestov identity the same way as in \cite{Paternain:2015ud} using that the flow is $1$-controlled in the terminology of \cite{Paternain:2015ud} when the curvature is non-positive.
\end{proof}

\begin{rem}
Note that the assumption of non-positive curvature can be relaxed to an $\alpha$-controlled condition as in \cite{Paternain:2015ud}.
\end{rem}

\section{Invariant distributions in the non-trapping case}

Throughout this section, we assume that the compact Riemannian manifold with boundary $(M,g)$ is non-trapping and has no conjugate points. We recall that we can see $M$ as a compact subset of a closed Riemannian manifold $(N,g)$ with $\dim(N)=\dim(M)$. By Lemma \ref{l:no_conjugate_points}, there exists two sufficiently small compact neighborhoods $M_1$ and $M_2$ of $M$ with smooth boundary, such that $M_1\subset M_2^\circ$ and $(M_2,g)$ is non-trapping and without conjugate points. For $i=1,2$, we set
$\mc{M}_i:=SM_i$, and we consider the exit time function $\tau^{\pm}_{\mc{M}_i}$ defined as in \eqref{exittime}. We will also need to consider the \emph{hitting time functions}
$t^{+}_{\mc{M}_i}:\mc{M}_i \to [0,\infty)$ and $t^{-}_{\mc{M}_i}:\mc{M}_i \to (-\infty,0]$, given by 
\begin{equation}\label{hitting}
t_{\mc{M}_i}^\pm(y):= \pm \inf\big\{t>0\ \big|\ \varphi_{\pm t}(y)\not\in \mc{M}_i^\circ\big\}.
\end{equation}
The X-ray transform on $\mc{M}_i$, which is defined as in \eqref{defXray}, will be denoted by 
\[I^{\mc{M}_i}:  C^\infty(\mc{M}_i)\to L^\infty(\pl_{-}\mc{M}_i\cup \pl_0\mc{M}_i).\]
As in \eqref{Gpm}, we denote by $\mc{G}^1_\pm\subset \mc{M}_1$ the flowout of $\pl_0\mc{M}_1$, and we set $\mc{G}_*^1:=\mc{G}^1_+\cup \mc{G}^1_-$. 

\subsection{Localized X-ray transform near the glancing region}

For each $y\in \mc{G}_*^1$, there exists $\delta_\pm(y)>0$ such that 
\[ y^\pm:=\varphi_{t^\pm_{\mc{M}_2}(y)\mp \delta_\pm(y)}(y)\in \mc{M}_2^\circ \setminus\mc{M}_1.
 \]
There is an open neighborhood $S^\pm_{y}$ of $y^{\pm}$ inside a hypersurface orthogonal to $X$ (with respect to Sasaki metric) such that $S^{\pm}_y\subset \mc{M}_2^\circ \setminus \mc{M}_1$.
Consider the flow line $\gamma_y\subset \mc{M}_2^\circ$ passing through $y$ with extremities $[y^-,y^+]$, 
then there is a small flow-box neighborhood $\mc{G}^1_*(y)$ of $\gamma_y$ (i.e. a neighborhood made of flow lines) contained in $\mc{M}_2^\circ$ and 
intersecting $S^{\pm}_y$ in two open sets $B^\pm_y\subset S^{\pm}_y$, in a way that 
$\mc{G}^1_*(y)$ is a forward flowout of $B_{y}^-$.
By compactness, we can extract a finite number of trajectories $(\gamma_{y_j})_{j=1,\dots J}$ for some $y_j\in \mc{M}_1$ such that $\mc{G}^1_*\subset \cup_{j\leq J}\mc{G}^1_*(y_j)$.
For each $y_j$, there exists a non-negative $\chi_j\in C_c^\infty(B^-_{y_j})$, positive near the point 
$y^-_j:=\gamma_{y_j}\cap B^-_{y_j}$, satisfying that if $\til{\chi}_{j}$ is the flow-invariant function 
in $\mc{G}^1_*(y_j)$ with boundary condition $\til{\chi}_j|_{B^-_{y_j}}=\chi_j$, 
there exists an open set $U_{\mc{G}^1_*}\subset \mc{M}_2^\circ $ containing $\mc{G}^1_*$ such that  
$\sum_{j\leq J}(\til{\chi}_j)^2=1$ on $U_{\mc{G}^1_*}$,   
and $U_{\mc{G}^1_*}\cap\mc{M}_1$ is flow-invariant in the sense that 
\[y\in U_{\mc{G}^1_*}\cap\mc{M}_1 \Longrightarrow\varphi_{[\tau^-_{\mc{M}_1}(y),\tau^+_{\mc{M}_1}(y)]}(y)\subset U_{\mc{G}_*^1}\cap\mc{M}_1.\]
Let $\theta\in C_c^\infty(M^\circ_2;[0,1])$ be equal to $1$ on a small neighborhood of $M$, and with $\supp(\theta)\subset M_1^\circ$. 
We also write $\theta$ for its pull-back $\pi_0^*\theta$ to $\mc{M}_2$. 

We first define the weighted X-ray transform 
$I_0^{\chi_j}:=I^{\chi_j}\pi_0^*$ as the map
\[I^{\chi_j}: C^\infty(\mc{M}_1)\to C^0(S^-_{y_j}) , \quad 
I^{\chi_j}f(y)=\chi_j(y)\int_{0}^{\tau^+_j(y)}f(\varphi_t(y))dt, 
\] 
where $\tau^+_{j}(y)$ is the first 
time so that $\varphi_{\tau^+_{j}(y)}(y)\in S^+_{y_j}$ and $f$ is extended by $0$ outside $\mc{M}_1$. We put 
the measure $ds_j:=\iota_{S_j}^*i_X\mu$ on $S^{-}_{y_j}$ where $\iota_{S_j}:S^{-}_{y_j}\to \mc{M}_2$ is the inclusion map. This induces a natural $L^2(S^{-}_{y_j})$ space and it is direct to see that $I^{\chi_j}:L^2(\mc{M}_1)\to L^2(S^{-}_{y_j})$ is bounded. 
 Then, we define the operator acting on $L^2(M_1)$ 
\[ P_1:=  \sum_{j=1}^J \theta (I_0^{\chi_j})^*I_0^{\chi_j}\theta\]
where $I_0^{\chi_j}:=I^{\chi_j}\pi_0^*$
and the adjoint is with respect to the measures $\mu$ for $\mc{M}_1$ and $ds_j$ for $S^{-}_{y_j}$.
Notice that $\tau^+_{j}$ is a smooth function on 
$\mc{G}^1_*(y_j)\cap \mc{M}_1$ due to the transversality of $X$ with $S^+_{y_j}$.
\begin{lem}\label{P1PDO} 
The operator $P_1$ is a pseudo-differential operator or order $-1$ on $M_1$, thus on $M_2$ as well.
There is $c>0$ such that
the principal symbol $\sigma(P_1)$ satisfies
\[ \sigma(P_1)(x,\xi)>c|\xi|^{-1}\]
for all $(x,\xi)\in T^*M_1$ for which there is $v\in S_xM_1$ such that  $(x,v)\in U_{\mc{G}^1_*}$
and $\xi(v)=0$.
\end{lem}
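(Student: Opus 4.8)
\emph{Proof strategy.} The plan is to recognise $P_1$ as a classical pseudodifferential operator of order $-1$ by writing down the Schwartz kernel of each summand $\theta(I_0^{\chi_j})^*I_0^{\chi_j}\theta$ explicitly, following the classical analysis of the normal operator of the geodesic X-ray transform (as in \cite{Muhometov:1981vs, Pestov:2005jo}); the only new features are the weights $\til\chi_j$ and the localization to the flow box $\mc{G}^1_*(y_j)$, and neither alters the microlocal structure. I would first rewrite $(I_0^{\chi_j})^*I_0^{\chi_j}$ as an integral operator. Using Santaló's formula (Lemma~\ref{santalofor}) in the flow box $\mc{G}^1_*(y_j)$, with transversal $S^-_{y_j}$ carrying the measure $ds_j$, together with the flow-invariance of $\til\chi_j$ there, a direct computation gives, for $h$ supported in $M_1^\circ$,
\begin{equation*}
(I_0^{\chi_j})^*I_0^{\chi_j}h(x)=\int_{S_xM_1}\til\chi_j(x,v)^2\left(\int_{I_j(x,v)}h\big(\pi_0\varphi_s(x,v)\big)\,ds\right)d\omega_x(v),
\end{equation*}
where $d\omega_x$ is the measure on $S_xM_1$ disintegrating the Liouville measure as $d\mu=d\mathrm{vol}_g\otimes d\omega_x$, and $I_j(x,v)\subset\rr$ is the interval of times $s$ for which $\varphi_s(x,v)$ lies between the transversals $S^\pm_{y_j}$; since $S^\pm_{y_j}\subset\mc{M}_2^\circ\setminus\mc{M}_1$, this interval contains $[\tau^-_{\mc{M}_1}(x,v),\tau^+_{\mc{M}_1}(x,v)]$. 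Summing over $j$ and multiplying by $\theta$ on both sides then produces $P_1$.

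\emph{$P_1$ is a pseudodifferential operator of order $-1$.} In the formula above I would substitute $z=\pi_0\varphi_s(x,v)=\exp_x(sv)$. By Lemma~\ref{l:no_conjugate_points}, $(M_2,g)$ has no conjugate points, so $\exp_x$ is an immersion with nonvanishing Jacobian: for $z$ near $x$ the map $(s,v)\mapsto\exp_x(sv)$ is two-to-one onto its image, with Jacobian $|s|^{n-1}J_g(x,sv)$ and $J_g(x,sv)\to1$ as $s\to0$; for $z$ away from $x$ it is a local diffeomorphism near each of the finitely many preimages lying in the thin tube $\mc{G}^1_*(y_j)$, and two distinct geodesics contained in that tube cannot share both endpoints (again by absence of conjugate points, via the smooth isomorphism $(J(0),J(\ell))\mapsto J$ on Jacobi fields recalled before Lemma~\ref{lemsharaf}). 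Inserting a cutoff $\psi(s)$ equal to $1$ near $s=0$ with $\supp\psi$ small, I would split $(I_0^{\chi_j})^*I_0^{\chi_j}=A_j+S_j$, where the ``far'' part $S_j$ (from $1-\psi$) has a $C^\infty$ Schwartz kernel — there are no caustics, and the weight $\til\chi_j$ vanishes smoothly at the angular boundary of the flow box — while the ``near'' part $A_j$ has Schwartz kernel $K_j(x,z)=d_g(x,z)^{1-n}\,b_j(x,z)$ with $b_j$ supported near the diagonal, smooth in polar coordinates around it, and with radial limit $b_j(x,z)\to\til\chi_j(x,w)^2+\til\chi_j(x,-w)^2$ as $z\to x$ along the direction $w\in S_xM$. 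This is precisely the kernel class of a classical $\Psi$DO of order $-1$, so each $A_j$, hence $\theta(I_0^{\chi_j})^*I_0^{\chi_j}\theta$, and finally $P_1=\sum_{j=1}^{J}\theta(I_0^{\chi_j})^*I_0^{\chi_j}\theta$, is a classical pseudodifferential operator of order $-1$ on $M_1$. Its kernel is supported in $M_1^\circ\times M_1^\circ$ because $\supp\theta\subset M_1^\circ$, so it extends by zero to a properly supported $\Psi$DO of order $-1$ on $M_2$.

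\emph{The principal symbol.} Taking, in geodesic normal coordinates at $x$, the Fourier transform of the leading term $d_g(x,z)^{1-n}\big(\til\chi_j(x,w)^2+\til\chi_j(x,-w)^2\big)$ and summing over $j$, one obtains
\begin{equation*}
\sigma(P_1)(x,\xi)=c_n\,\theta(x)^2\,|\xi|_g^{-1}\int_{\{v\in S_xM\,:\,\xi(v)=0\}}\Big(\sum_{j=1}^{J}\til\chi_j(x,v)^2\Big)\,dS_{x,\xi}(v),
\end{equation*}
with $c_n>0$ a dimensional constant and $dS_{x,\xi}$ the measure induced on the $(n-2)$-sphere $S_xM\cap\ker\xi$ (for $n=2$ the two-point set $\{\pm v\}$ with counting measure). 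Now if $(x,\xi)\in T^*M_1$ admits some $v$ with $(x,v)\in U_{\mc{G}^1_*}$ and $\xi(v)=0$, then $\theta(x)=1$ by the choice of $\theta$, and $\sum_{j}\til\chi_j(x,v)^2=1$ since $(x,v)\in U_{\mc{G}^1_*}$; for $n=2$ this already yields $\sigma(P_1)(x,\xi)\geq c_2\,|\xi|_g^{-1}$, while for $n\geq3$ one uses in addition that $U_{\mc{G}^1_*}$ is open, so a neighbourhood of $v$ in $S_xM\cap\ker\xi$ also lies in $U_{\mc{G}^1_*}$, and a routine compactness argument provides a uniform $c>0$ with $\sigma(P_1)(x,\xi)>c\,|\xi|_g^{-1}$.

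\emph{Main obstacle.} The crux is the assertion that $(I_0^{\chi_j})^*I_0^{\chi_j}$ is a pseudodifferential operator, i.e.\ that the only singularity of its Schwartz kernel is the conormal one along the diagonal. This is exactly where the absence of conjugate points on $M_2$ is used: it rules out caustics of $\exp_x$ and forces geodesics contained in the thin tube $\mc{G}^1_*(y_j)$ to be determined by their endpoints, so that the off-diagonal part of the kernel is smooth; one must also check that the smooth vanishing of each $\til\chi_j$ at the boundary of its flow box kills possible conormal contributions from the ``edges'' of the individual localized transforms. Once this is in place, the symbol computation and the lower bound are routine, as above.
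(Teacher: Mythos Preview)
Your argument is correct and follows the same architecture as the paper: write $(I_0^{\chi_j})^*I_0^{\chi_j}$ as an integral over $S_xM_1$, split into near/far diagonal pieces with a cutoff $\psi(t)$, show the near piece is a classical $\Psi$DO of order $-1$ via the change of variables $x'=\exp_x(tv)$, and read off the principal symbol by Fourier transform in polar coordinates. The one substantive difference is in the off-diagonal piece. You argue geometrically that $\exp_x$ is a local diffeomorphism (no caustics) and that the weight $\til\chi_j$ vanishes smoothly at the edge of the flow box, so the far kernel is $C^\infty$. The paper instead treats the map $(t,y)\mapsto\varphi_t(y)$ as an FIO and computes the wavefront set of the pushforward kernel, showing it is empty precisely because $d\varphi_t(y)\mc{V}\cap\mc{V}=\{0\}$ on $M_1$. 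Your route is more elementary and self-contained; the paper's route is more systematic and makes transparent exactly which microlocal condition (absence of conjugate points) kills the off-diagonal singularities.

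One small correction: your claim that ``two distinct geodesics contained in that tube cannot share both endpoints'' does not follow from the Jacobi-field isomorphism you cite (that gives uniqueness of Jacobi fields with prescribed endpoint values, not uniqueness of geodesics), and is in any case unnecessary. What you actually need, and what no conjugate points gives, is that the number of geodesics in the tube joining $x$ to $x'$ is locally constant in $(x,x')$ and each branch depends smoothly on $(x,x')$; together with the smooth vanishing of $\til\chi_j$ at the boundary of the flow box this yields the $C^\infty$ kernel. Also, the assertion ``$\theta(x)=1$ by the choice of $\theta$'' is only guaranteed for $x$ in a neighbourhood of $M$, not on all of $M_1$; the paper is equally casual here, and it is harmless since in the application one only needs ellipticity of $P_1+P_2+P_3$, with $P_3$ covering the region where $\theta<1$.
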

\begin{proof} The proof is a combination of arguments 
in \cite[Lemma 3]{Stefanov:2008bd} and \cite{Guillarmou:2017if}. A direct computation gives the expression 
\[ (I_0^{\chi_j})^*I_0^{\chi_j}f(x)=\int_{S_xM_1} \int_{0}^{\tau_j^+(x,v)}f(\pi_0(\varphi_t(x,v)))(\til{\chi}_j^2(x,v)+\til{\chi}_j^2(x,-v))dtd\Omega_x(v) \]
where $\til{\chi}_j$ is the smooth function on $SM_1$ 
satisfying $X\til{\chi}_j=0$ and $\til{\chi}_j|_{S_{y_j}^-}=\chi_j$ (here $d\Omega_x$ is the natural measure on $S_xM_1$). The $t$-integral can be written as 
\[\int_{0}^{\tau_j^+(x,v)}f(\pi_0(\varphi_t(x,v)))dt =\int_{0}^\eps \psi(t)f(\exp_{x}(tv))dt+
\int_{0}^{\tau_j^+(x,v)}(1-\psi(t))f(\exp_{x}(tv))dt 
\] 
with $\psi\in C_c^\infty(-\eps,\eps)$ equal to $1$ near in $[-\eps/2,\eps/2]$, for some small $\eps>0$. We can write
\[\int_{S_xM_1} \int_{0}^{\tau_j^+(x,v)}(1-\psi(t))f(\pi_0(\varphi_{t}(x,v)))F_j(x,v)dtdv=
\int_{M_1}K_1(x,x')f(x'){\rm dv}_g(x') \]
with $K_1\in \mc{D}'(M_1^\circ \x M^\circ_1)$ and $F_j(x,v):=\til{\chi}_j^2(x,v)+\til{\chi}_j^2(x,-v)$. We claim that $K_1\in C^\infty(M^\circ_1\x M^\circ_1)$.
To prove this, we first recall that the pull-back operator by $\varphi_{t}$, viewed as a map $C^\infty(\mc{M}_1)\to C^\infty(\rr\x \mc{M}_1)$  is a Fourier integral operator and by \cite[Theorem 8.2.4]{Hormander:1983dw} the wavefront set of its Schwartz kernel is  contained in
\[ 
\big\{ (t,y,\varphi_{t}(y);-\eta(X(\varphi_{t}(y))),-d\varphi_{t}(y)^T\eta,\eta)\ \big|\ t\geq 0, y\in \mc{M}_2^\circ, \eta\in T^*_{\varphi_{t}(y)}\mc{M}_2^\circ\big\}.\]
By the pushforward rule for wavefront sets \cite[Theorem 8.2.12]{Hormander:1983dw}, we obtain
that ${\rm WF}(K_1|_{M_1^\circ \x M_1^\circ})\subset W$ with 
\[\begin{split}
W:=\big\{ & \pi_0(y),\pi_0(y'),\xi,\xi')\in T^*(M_1 \x M_1)\ \big|\
\exists t>\eps/2, y'=\varphi_t(y),
  \exists \, \eta \in T^*\mc{M}_1,\\ 
            & \,\, \eta(X(y'))=0 , \, d\pi_0(y')^T\xi'=\eta, \,\, d\pi_0(y)^T\xi=-d\varphi_t(y)^{T}\eta,
             \,\, y\in \supp(\til{\chi}_j)\big\}.
\end{split}\]
Assuming that the geodesic flow has no conjugate points in $M_1$, i.e. $d\varphi_t(y)\mc{V}\cap \mc{V}=0$ for all $t\in(0, \tau^+_{\mc{M}_1}]$, it is direct to see that $W\setminus \{\xi=\xi'=0\}=\varnothing$, showing that $K_1\in C^\infty(M_1^\circ\x M_1^\circ)$.
We next analyse the small time integral and write 
\[ \int_{S_xM_1} \int_{0}^{\eps}\psi(t)f(\exp_{x}(tv))F_j(x,v)dtd\Omega_x(v)=
\int_{M_1}K_2(x,x')f(x'){\rm dvol}_g(x')\]
for some integral kernel $K_2$ on $M_1$.
Changing coordinates to $x'=\exp_{x}(tv)=\pi_0(\varphi_t(x,v))$ (using that the exponential map is a diffeomorphism near $0$), we have $t=d_g(x,x')$ and $v(x,x')=\exp_x^{-1}(x')/d_g(x,x')$
and we can write on $M_1\x M_1$ 
\[ K_2(x,x')= F_j(x,v(x,x'))\frac{\psi(d_g(x,x'))}{d_g(x,x')^{n-1}}J(x,x')\sqrt{\det(g_x)}\]
where $J(x,x')$ is the Jacobian of the map $tv=\exp^{-1}_{x}(x')$ (here we use the Riemannian measure on $T_xM$). The function $J(x,x')$ is smooth and $J(x,x)=1$,
thus $d_g^{n-1}K_2$ is smooth in polar coordinates around the diagonal, which implies that 
$K_2\in \Psi^{-1}(M_1)$ is a classical pseudo-differential operator of order $-1$.
Its local symbol is given by $(x,\xi)\mapsto \mc{F}_{z\to \xi}(K_2(x,x-z))$ and,
 its principal symbol is (here $|z|^2_{g_x}=\sum_{i,j}g_{ij}(x)z_iz_j$)
\[ \sigma_j(x,\xi)= \sqrt{\det g_x}\int_{\rr^n}e^{i\xi.z}|z|_{g_x}^{-n+1}F_j\Big(x,\frac{z}{|z|_{g_x}}\Big)dz\]
Using that $F_j(x,v)=F_j(x,-v)$ and using polar coordinates $z=r\omega$, we obtain
\[\sigma_j(x,\xi)=2\pi |\xi|_{g^{-1}_x}^{-1}\int_{\{|v|_{g_x}=1\, |\, \xi(v)=0\}}  \!\!\!\!\!\!\!\!\!\!\!\!\!\!\!\!\!\!\!\!  F_j(x,v)d\Theta_x(v)\]
which satisfies  $\sigma_j(x,\xi)\geq c_0|\xi|_{g^{-1}}^{-1}$ for some $c_0>0$ at those $\xi\in T^*M_1$ such that there exists $v\in S_xM_1$ with $\xi(v)=0$ and $F_j(x,v)>c_1$ for some $c_1>0$ ($c_0$ depending on $c_1$). Here $d\Theta_x(v)$ is a natural measure obtained from $d\Omega_x(v)$  on the submanifold $\{|v|_{g_x}=1\ |\ \xi(v)=0\}\subset S_xM$. 
Thus there is $c>0$ such that for all 
$(x,v)\in U_{\mc{G}^1_*}$, if $\xi(v)=0$ we obtain that the principal symbol $\sigma(P_1)$ of $P_1$ satisfies 
$\sigma(P_1)(x,\xi)>c|\xi|_{g^{-1}}^{-1}$.
\end{proof}

\subsection{X-ray transform outside the glancing region}
Let $\chi_0\in C_c^\infty(\pl_-\mc{M}_1; [0,1])$ such that 
$\chi_0=1$ on $\pl_-\mc{M}_1\setminus U_{\mc{G}^1_*}$ and $\chi_0=0$ 
near $\mc{G}^1_*$, and let $\til{\chi}_0$ be the function on $\mc{M}_1$ that is flow-invariant  such that $\til{\chi}_0|_{\pl_-\mc{M}_1}=\chi_0$. We define the operator 
\[ P_2:= \theta {(I_0^{M_1})}^*\chi_0^2I_0^{M_1}\theta\]
acting on $C_c^\infty(M_2^\circ)$, where $I_0^{M_1}=I^{\mc{M}_1}\pi_0^*$ is the $X$-ray transform on functions on $M_1$.
\begin{lem}\label{P2PDO}
The operator $P_2$ is a pseudo-differential operator or order $-1$ on $M_2$.
There is $c>0$ such that the principal symbol $\sigma(P_2)$ satisfies
\[ \sigma(P_2)(x,\xi)>c|\xi|^{-1}\]
for all $(x,\xi)\in T^*M_1$ for which there is $v\in S_xM_1$ such that  $(x,v)\notin U_{\mc{G}_*^1}$ and $\xi(v)=0$.
\end{lem}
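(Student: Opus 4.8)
The plan is to repeat the proof of Lemma~\ref{P1PDO} almost line by line, with the weight $\chi_j$ replaced by $\chi_0$ and the time $\tau_j^+$ replaced by the exit time $\tau^+_{\mc{M}_1}$. First I would compute the Schwartz kernel of the central operator: a direct computation (symmetrizing over the two orientations of each geodesic) gives
\[
(I_0^{M_1})^*\chi_0^2 I_0^{M_1}f(x)=\int_{S_xM_1}\int_0^{\tau^+_{\mc{M}_1}(x,v)}f(\pi_0(\varphi_t(x,v)))\big(\til{\chi}_0^2(x,v)+\til{\chi}_0^2(x,-v)\big)\,dt\,d\Omega_x(v),
\]
where $\til{\chi}_0$ is the flow-invariant extension of $\chi_0$ and $d\Omega_x$ is the natural measure on $S_xM_1$. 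Since $\theta\in C_c^\infty(M_1^\circ)$, conjugating by $\theta$ confines the relevant geodesic segments to a fixed compact subset of $M_2^\circ$; because $(M_1,g)$ is non-trapping these segments have uniformly bounded length, and because $\til{\chi}_0$ vanishes near $\mc{G}^1_*$ they meet $\pl M_1$ transversally. It therefore suffices to analyse the kernel of $(I_0^{M_1})^*\chi_0^2 I_0^{M_1}$ on $M_1^\circ\times M_1^\circ$ and extend by zero to $M_2$.

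Then I would split the $t$-integral with a cutoff $\psi\in C_c^\infty(-\eps,\eps)$ equal to $1$ near $0$ into a small-time and a large-time piece, exactly as in the proof of Lemma~\ref{P1PDO}. For the large-time piece, the absence of conjugate points in $M_1$ (Lemma~\ref{l:no_conjugate_points}), together with the Fourier-integral-operator description of the pull-back by $\varphi_t$ and the wavefront calculus of \cite[Theorems~8.2.4, 8.2.12]{Hormander:1983dw}, shows that the associated kernel is smooth on $M_1^\circ\times M_1^\circ$, the relevant wavefront set being forced to lie in $\{\xi=\xi'=0\}$ just as for the set $W$ in Lemma~\ref{P1PDO}. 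For the small-time piece, the change of variables $x'=\exp_x(tv)$, $t=d_g(x,x')$, turns the kernel into $d_g(x,x')^{-(n-1)}\psi(d_g(x,x'))$ times a smooth factor, hence a classical pseudo-differential operator of order $-1$; the same polar-coordinate computation as in Lemma~\ref{P1PDO} gives
\[
\sigma(P_2)(x,\xi)=\theta(x)^2\cdot 2\pi\,|\xi|_{g^{-1}}^{-1}\int_{\{v\in S_xM_1\,:\,\xi(v)=0\}}\big(\til{\chi}_0^2(x,v)+\til{\chi}_0^2(x,-v)\big)\,d\Theta_x(v),
\]
with $d\Theta_x$ as in Lemma~\ref{P1PDO}. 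In particular $P_2\in\Psi^{-1}(M_2)$.

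Finally I would establish the symbol bound, which is the only step requiring some care. Let $(x,\xi)\in T^*M_1$ be such that there is $v\in S_xM_1$ with $(x,v)\notin U_{\mc{G}^1_*}$ and $\xi(v)=0$; as in Lemma~\ref{P1PDO} we argue at points where $\theta(x)=1$ (a neighbourhood of $M$, which is all that will be used). Since $(M_1,g)$ is non-trapping and $\mc{G}^1_*\subset U_{\mc{G}^1_*}$, the geodesic through $(x,v)$ is neither trapped nor glancing, so it reaches $\pl_-\mc{M}_1$ at $y_0:=\varphi_{\tau^-_{\mc{M}_1}(x,v)}(x,v)$. By the flow-invariance of $U_{\mc{G}^1_*}\cap\mc{M}_1$, the hypothesis $(x,v)\notin U_{\mc{G}^1_*}$ forces the whole geodesic segment — in particular $y_0$ — to stay outside $U_{\mc{G}^1_*}$; there $\chi_0\equiv1$, so $\til{\chi}_0(x,v)=1$. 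Hence the integrand above is $\geq1$ at $v$, and by continuity it stays $\geq\tfrac12$ on a neighbourhood of $v$ inside $\{\xi(v)=0\}$, so the integral is bounded below; the uniformity of the resulting constant $c>0$ over the relevant (compact modulo scaling) set of $(x,\xi)$ follows exactly as in Lemma~\ref{P1PDO}. The pseudo-differential and symbol computations being mechanical repetitions of those in the proof of Lemma~\ref{P1PDO}, the real content is this passage from the flow-invariance of $U_{\mc{G}^1_*}\cap\mc{M}_1$ (combined with non-trapping) to $\til{\chi}_0\equiv1$ outside $U_{\mc{G}^1_*}$.
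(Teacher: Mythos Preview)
Your proposal is correct and follows exactly the approach the paper intends: the paper's own proof is the single sentence ``The proof is exactly the same as for Lemma~\ref{P1PDO}'', and your write-up is precisely that argument spelled out with $\chi_0$, $\til\chi_0$, and $\tau^+_{\mc{M}_1}$ in place of $\chi_j$, $\til\chi_j$, and $\tau_j^+$. Your explicit justification of the symbol lower bound via the flow-invariance of $U_{\mc{G}^1_*}\cap\mc{M}_1$ (forcing the backward endpoint $y_0\in\pl_-\mc{M}_1\setminus U_{\mc{G}^1_*}$, hence $\til\chi_0(x,v)=\chi_0(y_0)=1$) is a useful detail that the paper leaves implicit.
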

\begin{proof} The proof is exactly the same as for Lemma \ref{P1PDO}
\end{proof}

\subsection{Surjectivity}
With $\theta\in C_c^\infty(M^\circ_2;[0,1])$ as above (supported in $M_1^\circ$), let us define the self-adjoint operator on the closed manifold $N$
\[ P_3:=(1-\theta)(1+\Delta_g)^{-1/2}(1-\theta).\]
The operator $P_3$ is an elliptic pseudo-differential operator of 
order $-1$ with principal symbol
\begin{equation}\label{P3PDO}
\sigma(P_3)(x,\xi)=(1-\theta)^2(x)|\xi|_{g^{-1}}^{-1}.
\end{equation}
Next, consider the following operator on the closed manifold $N$
\[ P:= P_1+P_2+P_3\]
Lemmas \ref{P1PDO} and \ref{P2PDO} readily imply the following corollary.
\begin{cor}
The operator $P$ is an elliptic pseudo-differential operator of order $-1$ on $N$, and is therefore Fredholm as a map $H^s(N)\to H^{s+1}(N)$ for each $s\in \rr$.
\end{cor}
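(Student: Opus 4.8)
The plan is to verify the two ingredients of the Fredholm statement: ellipticity of $P$ as a $\Psi$DO of order $-1$ on the closed manifold $N$, and then invoke the standard Fredholm theory for elliptic operators on closed manifolds. First I would observe that $P_1,P_2,P_3$ are all classical pseudo-differential operators of order $-1$ on $N$: this is Lemma~\ref{P1PDO} for $P_1$ (extended to $N$ by the cutoff $\theta$ supported in $M_1^\circ$), Lemma~\ref{P2PDO} for $P_2$, and the explicit symbol computation~\eqref{P3PDO} for $P_3$. Hence $P=P_1+P_2+P_3\in\Psi^{-1}(N)$ with principal symbol $\sigma(P)=\sigma(P_1)+\sigma(P_2)+\sigma(P_3)$, and since each summand is nonnegative on $T^*N\setminus 0$, we have $\sigma(P)\geq 0$ everywhere.

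The heart of the matter is to check $\sigma(P)(x,\xi)>0$ for all $(x,\xi)\in T^*N\setminus 0$. I would split into cases according to the location of $x$. If $x\notin M_1$ (or more precisely where $1-\theta>0$), then $\sigma(P_3)(x,\xi)=(1-\theta)^2(x)|\xi|_{g^{-1}}^{-1}>0$ by~\eqref{P3PDO}. If $x$ lies where $\theta\equiv 1$ (a neighborhood of $M$), then $P_3$ contributes nothing, but now fix any covector $\xi\neq 0$ at $x$; since $\dim M\geq 2$ there exists a unit vector $v\in S_xM_1$ with $\xi(v)=0$, and for that $v$ we have either $(x,v)\in U_{\mc{G}^1_*}$ or $(x,v)\notin U_{\mc{G}^1_*}$. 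In the first case Lemma~\ref{P1PDO} gives $\sigma(P_1)(x,\xi)>c|\xi|^{-1}$; in the second case Lemma~\ref{P2PDO} gives $\sigma(P_2)(x,\xi)>c|\xi|^{-1}$. Either way $\sigma(P)(x,\xi)>0$. The remaining intermediate region ($0<\theta<1$) is handled by combining the two mechanisms: one still finds $v\in S_xM_1$ with $\xi(v)=0$, and at least one of $\sigma(P_1),\sigma(P_2)$ is positive there, while $\sigma(P_3)\geq 0$; alternatively $\sigma(P_3)>0$ already. This exhausts $T^*N\setminus 0$ and establishes ellipticity.

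Once $P\in\Psi^{-1}(N)$ is elliptic on the closed manifold $N$, the conclusion is immediate from the standard theory: an elliptic pseudo-differential operator of order $m$ on a closed manifold is Fredholm as a bounded operator $H^s(N)\to H^{s-m}(N)$ for every $s\in\rr$, with a parametrix in $\Psi^{-m}(N)$ inverting it modulo smoothing (hence compact) operators. Applying this with $m=-1$ yields that $P:H^s(N)\to H^{s+1}(N)$ is Fredholm for each $s\in\rr$, which is exactly the claim.

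The main obstacle, and the only non-formal point, is the case analysis for strict positivity of the principal symbol in the overlap region where $0<\theta<1$: one must make sure that, whatever the value of $\theta(x)$, and whatever $\xi$, one of the three summands is strictly positive. This relies crucially on the facts that $\theta$ is supported inside $M_1^\circ$ (so the domains where the three lemmas apply genuinely cover $T^*N\setminus 0$), that $U_{\mc{G}^1_*}$ is an open neighborhood of the glancing flowout $\mc{G}^1_*$ so that for each $(x,v)$ the dichotomy ``$(x,v)\in U_{\mc{G}^1_*}$ or not'' combined over all $v\in S_x M_1$ with $\xi(v)=0$ covers all covectors, and that $\dim M\geq 2$ guarantees the existence of such a $v$. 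Everything else is the routine invocation of Fredholm theory for elliptic $\Psi$DOs on closed manifolds.
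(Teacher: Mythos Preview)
Your proposal is correct and is precisely the argument the paper has in mind; the paper merely states that the corollary ``readily'' follows from Lemmas~\ref{P1PDO} and~\ref{P2PDO} (together with~\eqref{P3PDO}) without writing out the case analysis you supply. One minor simplification: your three-case split can be reduced to two, since wherever $\theta(x)<1$ the term $\sigma(P_3)(x,\xi)=(1-\theta(x))^2|\xi|_{g^{-1}}^{-1}$ is already strictly positive, so the only region requiring Lemmas~\ref{P1PDO} and~\ref{P2PDO} is $\{\theta=1\}\subset M_1^\circ$.
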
 
We now show that $P$ is injective. We denote by ${\pi_0}_*$ the operator that integrates a function on $SN$ (or on the submanifolds $M_2$, $M_1$, and $M$) in the fibers with respect to the measure induced by the Riemannian metric $g$.
\begin{prop}\label{Pinjective}
For each $f\in H^k(M)$ 
with $k\geq 1$, there exists 
$w\in H^{k-1}(SM_1)$ such that $Xw|_{SM}=0$ and ${\pi_0}_*w=f$ in $M$. 
If $f\in C^\infty(M)$, then $w$ can be chosen in $C^\infty(SM)$.
\end{prop}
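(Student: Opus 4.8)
## Proof strategy for Proposition~\ref{Pinjective}

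\textbf{The plan} is to use the Fredholm operator $P$ constructed above to manufacture an invariant distribution on $SM_1$ whose fiber integral over $SM$ equals $f$. Concretely, I would first show that $P$ is injective on $H^s(N)$ for all $s$, hence an isomorphism $H^s(N)\to H^{s+1}(N)$ by the Fredholm alternative. Injectivity follows by a duality/energy argument: if $Ph=0$, then pairing with $h$ and using that $P=P_1+P_2+P_3$ is a sum of three non-negative operators (each $P_i$ is of the form $A_i^*A_i$ up to the $(1-\theta)$ cutoffs and the manifestly non-negative $(1+\Delta_g)^{-1/2}$), we get $\langle P_ih,h\rangle=0$ for each $i$. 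From $\langle P_3 h,h\rangle=0$ we conclude $(1-\theta)h=0$, i.e.\ $h$ is supported in $\{\theta\neq 0\}\subset M_1^\circ$. From $\langle P_1 h,h\rangle=0$ and $\langle P_2h,h\rangle=0$ we get $I_0^{\chi_j}(\theta h)=0$ for all $j$ and $\chi_0 I_0^{M_1}(\theta h)=0$; since the weights $\chi_0,\til\chi_j$ jointly cover all of $\pl_-\mc M_1$ (the $\til\chi_j^2$ sum to $1$ on $U_{\mc G^1_*}$ and $\chi_0=1$ outside it), this forces $I_0^{M_1}(\theta h)=0$ as an element of $L^2(\pl_-\mc M_1)$. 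Then Theorem~\ref{th4}(i) (applied on $M_1$, which is non-trapping without conjugate points) gives $\theta h=0$, hence $h=0$.

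\textbf{Next}, given $f\in H^k(M)$, extend it to $\til f\in H^k(M_1)$ and further by zero (or smoothly) to a function on $N$, and set $h:=P^{-1}\til f\in H^{k+1}(N)$; by elliptic regularity if $\til f$ is smooth then so is $h$. Restricting attention to $M_1$ and unwinding the definition of $P$, the identity $Ph=\til f$ reads, on $M$ where $\theta\equiv 1$ and $1-\theta\equiv 0$, as
\[
{\pi_0}_*\Big(\sum_j \til\chi_j^2\,(I^{\chi_j}(\theta h))\!\circ\!(\text{flow out})+\til\chi_0^2\,(I^{M_1}(\theta h))\!\circ\!(\text{flow out})\Big)=f,
\]
which exhibits $f$ as the fiber integral of a specific function $w$ on $SM_1$ built from the backward flowouts of the boundary functions $I^{\chi_j}(\theta h)$ and $\chi_0^2 I^{M_1}(\theta h)$ over the pieces $\mc G^1_*(y_j)$ and over $\pl_-\mc M_1\setminus U_{\mc G^1_*}$. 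By construction each such flowout is annihilated by $X$, and on the overlap region the partition-of-unity relations guarantee the pieces assemble into a single well-defined $w$ with $Xw=0$ on $SM_1$; restricting to $SM$ (where the relevant cutoffs behave correctly because $U_{\mc G^1_*}\cap\mc M_1$ is flow-invariant and $\supp(1-\chi_0)$ near glancing is covered by the $\til\chi_j$) gives $Xw|_{SM}=0$ and ${\pi_0}_*w=f$ in $M$. The regularity $w\in H^{k-1}(SM_1)$ comes from the mapping properties of the adjoints $(I^{\chi_j})^*$, $(I^{M_1})^*$ (each drops one derivative, consistent with $P$ being order $-1$), and from Lemma~\ref{outsideGamma}; smoothness of $w$ on $SM$ when $f$ is smooth follows since on $SM$ the flowouts avoid the glancing set $\mc G^1_*$ of $M_1$.

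\textbf{The main obstacle} I anticipate is bookkeeping rather than a deep difficulty: one must verify carefully that the function $w$ pieced together from the various weighted flowouts is genuinely single-valued and flow-invariant across the overlaps $\mc G^1_*(y_j)\cap\mc G^1_*(y_k)$ and across $U_{\mc G^1_*}\cap(\pl_-\mc M_1\setminus\{\chi_0=0\})$, and that its restriction to $SM$ really has fiber integral exactly $f$ — this is where the flow-invariance of $U_{\mc G^1_*}\cap\mc M_1$ and the fact that $\theta\equiv 1$ near $M$ are used essentially, and where one must be sure no contribution from geodesics that re-enter $M_1$ after leaving $M$ spoils the identity. The other point requiring care is the loss of regularity: a priori $w$ is only $H^{k-1}$ because the adjoint X-ray operators cost a derivative, so one cannot expect $w\in H^k(SM_1)$; this is why the statement is phrased with $H^{k-1}$, and it is exactly the regularity that the subsequent Livsic-type arguments (Proposition~\ref{Livsic}) are designed to accommodate.
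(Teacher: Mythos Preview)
Your overall architecture is right and matches the paper's, but the injectivity step contains a genuine gap. From $\chi_0 I_0^{M_1}(\theta h)=0$ and $I_0^{\chi_j}(\theta h)=0$ for all $j$ you conclude $I_0^{M_1}(\theta h)=0$ on all of $\partial_-\mc M_1$, and then invoke Theorem~\ref{th4} on $M_1$. This inference is not justified. The transform $I_0^{\chi_j}$ integrates along a geodesic from the transverse slice $S^-_{y_j}\subset \mc M_2^\circ\setminus\mc M_1$ all the way to $S^+_{y_j}$; since $h$ is supported in $M_1$, this integral equals the \emph{sum} of the $I_0^{M_1}h$ values over the (possibly several) connected components of that orbit inside $\mc M_1$. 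Knowing that this sum vanishes does not tell you that each component's integral vanishes, so for $y\in\partial_-\mc M_1$ in the region where $\chi_0=0$ you have no control on $I_0^{M_1}h(y)$ individually. The fact that the weights $\chi_0,\til\chi_j$ ``cover'' $\partial_-\mc M_1$ is beside the point: the two transforms live on different spaces and integrate over segments of different lengths.

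The paper fixes exactly this by changing the target manifold: instead of trying to conclude $I_0^{M_1}h=0$, it shows $I_0^{M_2}h=0$ (with $h$ extended by zero to $M_2$) via a decomposition of each $\mc M_2$-geodesic into pieces lying in $U_{\mc G^1_*}$ and pieces outside it, and then applies Theorem~\ref{th4} on $M_2$. This is the missing idea.

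Two smaller points. First, $P:H^s\to H^{s+1}$, so $P^{-1}\til f\in H^{k-1}$, not $H^{k+1}$; this is why $w$ ends up in $H^{k-1}$. Second, your description of $w$ as something ``assembled'' from pieces that must agree on overlaps is unnecessarily delicate, and the smoothness justification (``on $SM$ the flowouts avoid $\mc G^1_*$'') is false: $\mc G^1_*$ certainly meets $SM$. It is cleaner to write directly
\[
w:=(I^{M_1})^*\chi_0^2 I_0^{M_1}\theta u+\sum_{j}(I^{\chi_j})^*I_0^{\chi_j}\theta u,
\]
which is a sum of manifestly flow-invariant functions on $SM_1$; the regularity $w\in H^{k-1}(SM_1)$ (and $w\in C^\infty$ when $f\in C^\infty$) then follows from the standard mapping properties of the adjoints $(I^{M_1})^*\chi_0$ and $(I^{\chi_j})^*$ together with elliptic regularity for $u$.
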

\begin{proof} Since $P_j\geq 0$ for each $j=1,2,3$, we have that for each $f\in \ker P\cap L^2(N)$, 
\[ \cjg P_1f,f\cjd_{L^2}=\cjg P_2f,f\cjd_{L^2}=\cjg P_3f,f\cjd_{L^2}=0.\] 
and by ellpiticity of $P$ we also have $f\in C^\infty(N)$. Using $\ker (1+\Delta_g)^{-1}=0$, we obtain 
\begin{equation}\label{identXrays}
(1-\theta)f=0, \quad \chi_0I_0^{M_1}(\theta f)=0 ,\quad \forall j\leq J,\,\,  I_0^{\chi_j}(\theta f)=0,
\end{equation} 
which clearly implies $f\in C_c^\infty(M_1^\circ)$, $\chi_0I_0^{M_1}f=0$ and $I_0^{\chi_j}f=0$ for each $j\leq J$.
We claim that this implies that $I_0^{M_2}f=0$ where $I_0^{M_2}:=I^{\mc{M}_2}\pi_0^*$ is the $X$-ray transform on functions on $M_2$. Indeed, take a point $y\in \pl_-\mc{M}_2\cup \pl_0\mc{M}_2$ and let $\gamma_y$ be the integral curve of $X$ in $\mc{M}_2$ passing by $y$. 
If $\gamma_y\cap U_{\mc{G}_*^1}=\varnothing$, then $\gamma_y\cap \mc{M}_1$ splits into 
a finite family of connected components $\gamma_y^k$ for $k=1,\dots k_0$ and 
\[ I_0^{M_2}f(y)=\sum_{k=1}^{k_0} I_0^{M_1}f(y'_k)\]
where $y'_k\in \pl_-\mc{M}_1\cup \pl_0\mc{M}_1$ are the extremities of $\gamma_{y}^k$.
But  $\chi_0(y_k)=1$ since $y_k\notin U_{\mc{G}^1_*}$, thus we get $I_0^{M_2}f(y)=0$.
If now $\gamma_y\cap U_{\mc{G}^1_*}\not =\varnothing$,  
$\gamma_y\setminus (\gamma_y\cap U_{\mc{G}^1_*})$ intersects $\mc{M}_1$ 
into finitely many connected components denoted by $\gamma_y^{k}$ for $k=1,\dots, k_0$ for some $k_0\in\nn$. Since $f$ is supported in $M_1$, we get 
\begin{equation}\label{decompI_0} 
I_0^{M_2}f(y)=\sum_{k=1}^{k_0} I_0^{M_1}f(y'_k)+ \int_{\gamma_y\cap U_{\mc{G}^1_*}}f
\end{equation}
where $y'_k\in \pl_-\mc{M}_1\cup \pl_0\mc{M}_1$ are the incoming extremities of $\gamma_{y}^k$. The first sum in \eqref{decompI_0} vanishes by \eqref{identXrays} while 
\[\int_{\gamma_y\cap U_{\mc{G}^1_*}}f=\sum_{j=1}^{J}I_0^{\chi_j}f(y''_j)=0\]
where $y''_j\in S^-_{y_j}$ are the incoming extremities of $\gamma_y\cap\mc{G}_{y_j}$.
We have thus proven that $I_0^{M_2}f=0$, and since $M_2$ has no pair of conjugate points and is non-trapping, we deduce by Theorem \ref{th4} that $f=0$, showing that $P$ is injective on $L^2(N)$.

Since $P:H^s(N)\to H^{s+1}(N)$ is Fredholm with index $0$ for each $s\in\rr$, we deduce that $P:H^s(N)\to H^{s+1}(N)$ is surjective. For $f\in H^k(M)$, we can extend it in $H^k_{\rm comp}(M_1^\circ)$ arbitrarily, then there exists a unique $u\in H^{k-1}(N)$ such that $Pu=f$ (if $f\in C^\infty$, then $u\in C^\infty$). Restricting this equality to $M$, we get $(P_1u+P_2u)=f$ in the region $\{\theta=1\}$, which means that in that same region
\[ {\pi_0}_*({(I^{M_1})}^*\chi_0^2I_0^{M_1}\theta u+\sum_{j=1}^J (I^{\chi_j})^*I_0^{\chi_j}\theta u)=f.\]
Set $w:={(I^{M_1})}^*\chi_0^2I_0^{M_1}\theta u+\sum_{j=1}^J (I^{\chi_j})^*I_0^{\chi_j}\theta u$. Now it is standard (e.g. \cite[Theorem 4.2.1]{Sharafutdinov:1994lh}) that $I_0^{\chi_j}u\in H^{k-1}(S_{y_j}^-)$, $\chi _0I_0^{M_1}u
\in H^{k-1}(\pl_-SM_1)$ and that 
\begin{equation}\label{regI*}
r_{SM_1}(I^{\chi_j})^{*}:H^{k-1}(S_{y_j}^{-})\to H^{k-1}(SM_1), \quad (I^{M_1})^*\chi_0:H^{k-1}(\pl_-SM_1)\to H^{k-1}(SM_1).\end{equation}
where $r_{SM_1}$ is the restriction to $SM_1$.
Thus $w$ is a $H^{k-1}(\mc{M}_1)$ function that satisfies $Xw=0$ and ${\pi_0}_*w=f$ in $M$, and it is smooth if $f$ was smooth.
 \end{proof}

\begin{rem}\label{assreg}
For each $f\in L^1(M)\cap\ker(I_0)$, we have $Pf=0$. Since $P$ is elliptic, we infer that $f\in C^\infty(M)$. This, together with Theorem~\ref{th4}, implies that $I_0$ is injective on $L^1(M)$ provided the compact Riemannian manifold $(M,g)$ is non-trapping and has no conjugate points.
\end{rem}

\subsection{The case of divergence-free $1$-forms}
We denote the adjoint of the pull-back $\pi_1^*$ by ${\pi_1}_*:\mc{D}'(SM^\circ)\to\mc{D}'(M^\circ;T^*M)$. We notice that $D=d$ on functions and thus $D^*=d^*$ is the divergence on $1$-forms.

\begin{prop}\label{surjI_1*}
For each $f\in H^k(M;T^*M)$ with $D^*f=0$ for $k\in 2\nn$, there exist 
$w\in H^{k-1}(SM_1)$ such that $Xw|_{SM}=0$ and ${\pi_1}_*w=f$ in $M$.
\end{prop}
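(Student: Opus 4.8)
The plan is to repeat the proof of Proposition~\ref{Pinjective} with the $X$-ray transform on $1$-forms in place of the one on functions. Put $I_1^{\chi_j}:=I^{\chi_j}\pi_1^*$ and $I_1^{M_i}:=I^{M_i}\pi_1^*$, and define, as operators on sections of $T^*N$,
\[
P_1:=\sum_{j=1}^{J}\theta\,(I_1^{\chi_j})^*I_1^{\chi_j}\,\theta,\qquad
P_2:=\theta\,(I_1^{M_1})^*\chi_0^2\, I_1^{M_1}\,\theta .
\]
Exactly as in Lemmas~\ref{P1PDO} and~\ref{P2PDO}, $P_1$ and $P_2$ are nonnegative classical pseudo-differential operators of order $-1$ on $T^*N$; at $(x,\xi)\in T^*M_1\setminus 0$ their principal symbols equal, up to a positive constant, $\omega(x,\xi)\,|\xi|^{-1}\,v_\xi^\flat\otimes v_\xi^\flat$, where $v_\xi\in S_xM_1$ satisfies $\xi(v_\xi)=0$ and the weight $\omega(x,\xi)\ge 0$ is positive precisely when $(x,v_\xi)$ lies inside $U_{\mathcal G^1_*}$ (for $P_1$), resp.\ outside $U_{\mathcal G^1_*}$ (for $P_2$). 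Hence $\sigma(P_1+P_2)$ is positive on the solenoidal line $(\R\xi)^{\perp}=\R\, v_\xi^\flat$ over all of $M_1$, but it annihilates the potential direction $\R\xi$; this degeneracy is unavoidable, since $I_1\circ D=0$. To recover ellipticity one keeps $P_3:=(1-\theta)(1+\Delta_g)^{-1/2}(1-\theta)$ (now acting on $1$-forms, with $\Delta_g$ the relevant Hodge Laplacian), which is elliptic off $\operatorname{supp}\theta$, and adds the nonnegative term $P_4:=D\,(1+\Delta_g)^{-3/2}\,D^*$ on $N$, whose symbol equals $|\xi|^{-1}$ on $\R\xi$ and vanishes on $(\R\xi)^{\perp}$. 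Then $P:=P_1+P_2+P_3+P_4$ is an elliptic pseudo-differential operator of order $-1$ on $T^*N$, hence Fredholm of index $0$ as a map $H^s(N;T^*N)\to H^{s+1}(N;T^*N)$.

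Next I show that $P$ is injective. If $Pf=0$, pairing with $f$ and using that all four summands are $\ge 0$ gives $D^*f=0$ (from $P_4$, since $(1+\Delta_g)^{-3/2}$ is positive), $(1-\theta)f=0$ (from $P_3$, so $f$ is supported in $\{\theta=1\}\Subset M_1^\circ$), and $I_1^{\chi_j}f=0$ for all $j$ as well as $\chi_0 I_1^{M_1}f=0$ (from $P_1$ and $P_2$). The geodesic-gluing argument of Proposition~\ref{Pinjective} — which only uses how the geodesics of $M_2$ decompose into arcs inside $M_1$ and arcs through the glancing flow-boxes, and is insensitive to the tensorial degree — then yields $I_1^{M_2}f=0$. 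Since $(M_2,g)$ is non-trapping and has no conjugate points, Theorem~\ref{th4}(ii) gives $f=dp$ with $p|_{\partial M_2}\equiv0$; together with $D^*f=0$ this forces $\Delta_g p=0$ in $M_2$ with vanishing boundary values, so $p\equiv0$ and $f\equiv0$. Thus $P$ is an isomorphism $H^s(N;T^*N)\to H^{s+1}(N;T^*N)$ for every $s$.

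For the proposition itself, given $f\in H^k(M;T^*M)$ with $D^*f=0$ and $k\in 2\nn$, extend $f$ to some $\tilde f\in H^k(N;T^*N)$ supported in $M_1^\circ$, and let $u\in H^{k-1}(N;T^*N)$ be the (unique) solution of $Pu=\tilde f$, with $u\in C^\infty$ when $f$ is. On the neighborhood $\{\theta=1\}$ of $M$ one has $P_3u=0$, so there
\[
f=(P_1+P_2)u+P_4u={\pi_1}_*w+dq,\qquad q:=(1+\Delta_g)^{-3/2}D^*u,
\]
where $w:=(I^{M_1})^*\chi_0^2\,I^{M_1}\pi_1^*\theta u+\sum_{j=1}^{J}(I^{\chi_j})^*I^{\chi_j}\pi_1^*\theta u$ is a function on $SM_1$. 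By the regularity statements in~\eqref{regI*}, $w\in H^{k-1}(SM_1)$, and since each summand is an $(I^{M_1})^*$- or $(I^{\chi_j})^*$-potential of boundary data, $w$ is constant along geodesic arcs; as in Proposition~\ref{Pinjective} this gives $Xw|_{SM}=0$. Moreover, because $Xw=0$ on $SM$ and $\pi_1^*\circ d=X\circ\pi_0^*$, integration by parts (using that $X$ is divergence-free) shows $D^*({\pi_1}_*w)=0$ in $M^\circ$; combined with $D^*f=0$ this forces $\Delta_g q=0$ on a neighborhood of $M$, i.e.\ the remainder $dq$ is a harmonic $1$-form there.

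It remains to remove this harmonic remainder so that ${\pi_1}_*w=f$ in $M$ exactly. This is the one genuinely new point with respect to the scalar Proposition~\ref{Pinjective}, where the normal operator is already elliptic and no such remainder appears. It is disposed of by a Hodge-theoretic argument: the obstruction map $f\mapsto \mathcal R f:=(dq)|_M$ factors through a harmonic — hence locally smoothing — operator, so $\operatorname{Id}-\mathcal R$ is Fredholm of index $0$ on $\{f\in H^k(M;T^*M):D^*f=0\}$; one checks it is injective (using again that ${\pi_1}_*$ of a flow-invariant function is solenoidal together with the uniqueness of the Dirichlet problem), hence invertible, and applying the construction above to $(\operatorname{Id}-\mathcal R)^{-1}f$ produces $w$ with ${\pi_1}_*w=f$ in $M$. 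The main obstacle is therefore twofold: the microlocal analysis near the glancing region underlying the pseudo-differential statements for $P_1,P_2$ (as in Lemmas~\ref{P1PDO}–\ref{P2PDO}, now on $T^*M$), and this final solenoidal-gauge normalization.
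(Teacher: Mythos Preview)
Your overall architecture is close to the paper's: build $P_1,P_2$ from the weighted $1$-form X-ray transforms, compute their principal symbols, note that the sum is elliptic only on the solenoidal direction $\xi^\perp$, and then use the injectivity of $I_1$ (Theorem~\ref{th4}) at a key step. Two remarks and one genuine gap.

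\textbf{Minor points.} First, your symbol formula $\sigma(P_1+P_2)\sim |\xi|^{-1}v_\xi^\flat\otimes v_\xi^\flat$ is specific to $n=2$; in the paper the proposition is stated for any $n$, and the symbol is the matrix $\int_{\{v\in S_xM:\,\xi(v)=0\}}F(x,v)(g_xv)_k v_\ell\,d\Theta_x(v)$, which is positive definite on the whole of $\xi^\perp$. Second, the paper does \emph{not} try to make $P$ globally elliptic: it sets $P=P_1+P_2$ on $M_1^\circ$ (no $\theta$, no $P_3$, no $P_4$) and instead invokes Sharafutdinov's solenoidal parametrix $QP=\mathrm{Id}+DSD^*+R$ with $R$ smoothing. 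On $\ker D^*$ the $DSD^*$ term drops out, so $r_MPQ^*E=\mathrm{Id}+r_MR^*E$ on $\ker D^*|_{H^k(M)}$; this gives that $r_MP$ has closed range of finite codimension in $H^k_{D^*}$. The substantial work is then to show that the adjoint $(r_MP)^*$ is injective on $H^k_{D^*}$, which the paper does by passing to $f'=(1+\Delta_g)^{k/2}e_M(1+\Delta_g)^{k/2}f$ (this is why $k\in 2\nn$ is used), Hodge-decomposing $f'=dp+q$, bootstrapping regularity of $q$ via the ellipticity of $P$ on $\ker D^*$, and finally applying Theorem~\ref{th4} on $M_1$.

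\textbf{The gap.} Your alternative---add $P_4=D(1+\Delta_g)^{-3/2}D^*$ to force global ellipticity on $N$, then remove the resulting $dq$ remainder---is a natural idea, and your injectivity argument for $P$ on $L^2(N)$ is fine. But the last paragraph is not a proof. You define $\mathcal R f=(dq)|_M$ and assert that $\mathrm{Id}-\mathcal R$ is injective ``using again that $\pi_{1*}$ of a flow-invariant function is solenoidal together with the uniqueness of the Dirichlet problem''. Granting that $\mathcal R$ is compact (this does follow from interior regularity once you extend $f$ solenoidally to $\{\theta=1\}$, so that $q$ is harmonic on an \emph{open} neighbourhood of $M$), the injectivity claim is the whole point and is not established. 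If $(\mathrm{Id}-\mathcal R)f=0$, all you get is $\pi_{1*}w|_M=0$ and $f=dq|_M$ with $q$ harmonic near $M$; nothing here forces $q|_{\partial M}$ to be constant, so the Dirichlet problem gives no contradiction, and $\pi_{1*}w|_M=0$ for a flow-invariant $w$ does not imply $w=0$ (nor $f=0$). In effect, proving $\mathrm{Id}-\mathcal R$ injective is as hard as the original surjectivity statement; this is precisely why the paper works on $\ker D^*$ from the start and proves injectivity of the \emph{adjoint} $(r_MP)^*$, where Theorem~\ref{th4} can be brought to bear after a careful Hodge decomposition argument that handles the singularity of $f'$ across $\partial M$. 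Your $P_4$-trick trades that difficulty for an equivalent one rather than avoiding it.
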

\begin{proof}
We proceed as for the case of functions. We use the same notations as in the previous subsections. Let $P_1:=  \sum_{j=1}^J (I_1^{\chi_j})^*I_1^{\chi_j}$ where $I_1^{\chi_j}:=I^{\chi_j}{\pi_1}^*$, and let $P_2:= {(I_1^{M_1})}^*\chi_0^2I_1^{M_1}$ viewed both as operators on $M_1$. 
Notice that $D^*Pf=0$ on $M_1^\circ$ since $D^*(I_1^{M_1})^*=0$ and $D^*(I_1^{M_2})=0$ (by using that $I^{M_j}_1D=0$ on $H_0^1(M_j)$). 
The operator $P=P_1+P_2$ is a classical pseudo-differential of order $-1$ on $M_1^\circ$ and the symbol is calculated in the same way as we did for functions: we can use for example the same arguments as \cite[Lemma 3]{Stefanov:2008bd} (or  
\cite{Sharafutdinov:2005mw} in the case with no weight) for the computation of the principal symbol, combined with the inclusion of the cutoffs $\chi_j$ like for the case of functions. The principal symbol of $P_1$ is $\sigma(P_1)(x,\xi)=\sum_{j=1}^J\sigma_j(x,\xi)$ with $\sigma_j(x,\xi)$ being the $n\x n$ matrix in the basis $dx_1,\dots,dx_n$ of $T_x^*M_1$ 
\[(\sigma_j(x,\xi))_{k\ell}= \sqrt{\det g_x}\int_{\rr^n}e^{i\xi.z}|z|_{g_x}^{-n-1}
F_j\Big(x,\frac{z}{|z|_{g_x}}\Big)((g_xz)_k z_\ell)dz\]
where $(g_xz)_k:=\sum_{i=1}^ng_{ki}(x)z_i$. Using polar coordinates and the fact that  $F_j(x,v)=F_j(x,-v)$, we obtain like for the case of functions
\[(\sigma_j(x,\xi))_{k\ell}=2\pi |\xi|_{g^{-1}_x}^{-1}\int_{\{|v|_{g_x}=1\,|\, \xi(v)=0\}}
F_j(x,v)(g_xv)_kv_\ell d\Omega_x(v).\]
Since $F_j\geq 0$, we have for $\eta\in T_x^*M_1$, 
\[\cjg\sigma_j(x,\xi)\eta,\eta\cjd_{g^{-1}_x}=2\pi|\xi|_{g^{-1}_x}^{-1}\int_{\{|v|_{g_x}=1\,|\, \xi(v)=0\}}F_j(x,v)(\eta(v))^2d\Theta_x(v) \geq 0. 
\]
Let $\xi\in T_xM^*$ so that there is $v_0\in T_xM$ where $\chi_j(v_0)>0$ and $\xi(v_0)=0$. Then if $\eta\in T_xM^*$ satisfies $\sigma_j(x,\xi)\eta=0$, we get that $\eta(v)=0$ for $v$ near $v_0$ satisfying $\xi(v)=0$. If in addition $\cjg \xi,\eta\cjd_{g^{-1}_x}=0$, then we obtain $\eta=0$, 
showing injectivity of $\sigma_j(x,\xi)$ on the set $\xi^{\perp}$.
Summing this over $j$, we obtain that for those $\xi\in T^*M$ such that there exists $(x,v)\in U_{\mc{G}_*^1}$ so that $\xi(v)=0$, $\sigma(P_1)(x,\xi)\geq c_0|\xi|^{-1}_{g^{-1}_x}$ on $\xi^\perp$ for some $c_0>0$. The same argument works for $P_2$, i.e. $\sigma(P_2)(x,\xi)\geq c_1|\xi|^{-1}_{g^{-1}_x}$ on $\xi^\perp$ for some $c_1>0$ when there is $(x,v)\not \in U_{\mc{G}^1_*}$ so that $\xi(v)=0$. We conclude that $P$ is elliptic at each $(x,\xi)\in T^*M$ on the subspace 
$\{ \eta\in T^*_xM\ |\  \cjg \eta,\xi\cjd_{g_x^{-1}}=0\}$. Now, \cite{Sharafutdinov:2005mw} shows that there are some classical pseudo-differential operator 
$Q\in \Psi^{1}(M_1^\circ)$, $S\in \Psi^{-2}(M_1^\circ)$ and $R\in \Psi^{-\infty}(M_1^\circ)$ with compact support in a domain containing $M$ such that 
$QP={\rm Id}+DSD^*+R$ near $M$. 
Now we follow the proof of \cite[Lemma 2.2]{Dairbekov:2010km}: there is a  continuous extension map $E:\ker D^*|_{L^2(M)}\to \ker D^*|_{L^2(M_1)}$ restricting to $E:\ker D^*|_{H^k(M)}\to \ker D^*|_{H^k(M_1)}$  for any $k\in 2\nn$ large and, if 
$r_M:L^2(M_1^\circ)\to L^2(M)$ is the restriction map, we have $r_MPQ^*E={\rm Id}+r_MR^*E$ on $\ker D^*|_{L^2(M)}$ and its range is contained in $\ker D^*|_{L^2}$.
This implies that the range of ${\rm Id}+r_MR^*E$ is closed with finite codimension in $\ker D^*|_{L^2(M)}$, 
and the same holds on the Hilbert space $H^k_{D^*}:=\ker D^*|_{H^k(M)}$ (equipped with the norm $||f||^2_{H^k}=||(1+\Delta_g)^{k/2}f||^2_{L^2}$ if  $\Delta_g:=d^*d+dd^*$ is the Hodge Laplacian on $1$-forms). 
Then $r_MPQ^*E(H_{D^*}^k(M))$ has closed range in $\ker D^*|_{H^k(M)}$
with finite codimension and thus $r_MP:H^{k-1}(M_1)\to H^k_{D^*}$ has closed range with finite codimension. Let us prove that the adjoint 
$(r_MP)^*$ is injective. Let $f\in \ker (r_MP)^*\cap H^k_{D^*}$: one has for all 
$u\in H^{k-1}(M_1)$ (using $k/2\in \nn$ and $P^*=P$ on $L^2(M_1)$)
\[0=\cjg (1+\Delta_g)^{k/2}r_MPu,(1+\Delta_g)^{k/2}f\cjd_{L^2(M)}=
\cjg u, P(1+\Delta_g)^{k/2}e_M(1+\Delta_g)^{k/2}f\cjd_{L^2(\til{M})}
\]
where $e_M:L^2(M)\to L^2(M_1)$ is the inclusion.
Define $f':=(1+\Delta_g)^{k/2}e_M(1+\Delta_g)^{k/2}f\in H^{-k}_{\rm comp}(M_1^\circ)$, this solves $Pf'=0$ in $M_1^\circ$ and, since $D^*=d^*$ commute with $\Delta_g$, we get $D^*f'=0$ outside $\pl M$. By elliptic regularity and since $\supp(f')\subset M_1^\circ$, we have $f'\in C^\infty(M_1\setminus \pl M)$.
Using Hodge decomposition, let us write $f'=dp+q$ for some $p\in H^{-k+1}(M_1^\circ)$ and $q\in H^{-k}(M_1^\circ)$ that are also in $C^\infty(M_1\setminus \pl M)$ and with $D^*q=0$ (recall $D=d$ on functions). We have $\Delta_gp=D^*dp=D^* f'$ thus $p\in C^\infty(M_1\setminus \pl M)$ and $q\in C^\infty(M_1\setminus \pl M)$. Since $P_1dw=0=P_2dw$ for all $w\in C^\infty(M_1\setminus \pl M)$ such that $w|_{M_1^\circ}\in H^{-k}(M_1^\circ)$ and $w|_{\pl M_1}=0$, 
we have $Pdp=Pd\til{p}$ where $\til{p}\in C^\infty(M_1)$ is any function such that $\til{p}=p$ near $\pl M_1$. Thus $Pq=Pf'-Pdp=-Pd\til{p}\in C^\infty(M_1^\circ)$ and thus $q\in C^\infty(M_1)$ by ellipticity of $P$ near $\pl M$ since $D^*q=0$. Then on $M_1^\circ$, 
$P(d\til{p}+q)=0$ and by Theorem \ref{th4} we deduce that 
$d\til{p}+q=d\hat{p}$ for some $\hat{p}\in C^\infty(M_1)$ with $\hat{p}|_{\pl M_1}=0$. Thus $f'=d(p+\hat{p}-\til{p})$ 
and we have 
\[ 0=\cjg Ef,f'\cjd_{L^2(M_1)}=\cjg (1+\Delta_g)^{k/2}f,(1+\Delta_g)^{k/2}f\cjd_{L^2(M)}
\]
which shows that $f=0$. We conclude that for each $f\in H^k(M)\cap \ker D^*$ there exists $u\in H^{k-1}(M_1)$ such that $r_MPu=f$, in other words ${\pi_1}_*w=f$ in $M$ if 
\[w:= (I^{M_1})^*\chi_0^2I_1^{M_1}u+\sum_{j=1}^J(I^{\chi_j})^*I^{\chi_j}_1u.\]
Using \eqref{regI*} we get $w\in H^{k-1}(SM_1)$ and $Xw=0$, ending the proof.
\end{proof}
\begin{rem}\label{lowregI1}
For $f\in L^2(M)$, $I_1f=0$ implies $Pf=0$ where $P$ is the operator on $M_1$ defined in the proof of Proposition \ref{surjI_1*} and $f$ is extended by $0$ in $M_1\setminus M$. We can use Hodge decomposition $f=q+dp$ for 
$p\in H_0^1(M_1)\cap C^\infty(M_1\setminus M)$ with $q\in L^2(M_1)\cap C^\infty(M_1\setminus M)$ solving $D^*q=0$: then $Pq=0$, and by ellipticity of $P$ on $\ker D^*$, we obtain $q\in C^\infty(M_1)$; then applying 
Theorem \ref{th4} on $M_1$ we get $q=0$, thus $f=dp$ on $M$ with $p\in H_0^1(M)$. 
\end{rem}

\section{Scattering rigidity and injectivity of $X$-ray on tensors in $2d$}

\subsection{Scattering rigidity: proof of Theorem \ref{th2}}

Let $(M,g)$ be a compact Riemannian surface with boundary. As before, we denote by $X$ the geodesic vector field associated to $g$, and by $V$ the vertical vector field defined by $Vf:=\pl_\theta(R^*_\theta f)|_{\theta=0}$ for each $f\in C^\infty(SM)$, where $R_\theta:SM\to SM$ is the rotation of angle $+\theta$ in the fibers of $SM$. We also set $X_\perp:=[X,V]$. The vector fields $X,V,X_\perp$ span the tangent bundle $T(SM)$, and are an orthonormal basis for the Sasaki metric on $SM$. 
Since $SM$ is a circle bundle, every $f\in C^\infty(SM)$ admits a Fourier decomposition in the fibers
\begin{equation}\label{decompHk}
 f(x,v)=\sum_{k\in \zz}f_k(x,v),\qquad\mbox{where }Vf_k=ikf_k.
\end{equation}
The above series converges uniformly, and $f_k\in C^\infty(SM)$. The analogous Fourier decomposition also holds for $f\in L^2(SM)$, where now $f_k\in L^2(SM)$ and the sum weakly converges.  We refer the reader to \cite{Guillemin:1980rq,Paternain:2013fx} for more details.

Consider the fiberwise Hilbert transform $H:C^\infty(SM)\to C^\infty(SM)$, given by
\begin{align*}
 H(w):=-\sum_{k\in\Z} i\, {\rm sign}(k)w_k,\qquad\forall w=\sum_{k\in\Z}w_k\in C^\infty(SM),
\end{align*}
with the convention that ${\rm sign}(0)=0$. The operator $H$ extends continuously to $L^2(SM)$ and we decompose it as $H=H_{\rm ev}+H_{\rm od}$, where
\[
 H_{\rm ev}(w):=-\sum_{k\ \mathrm{even}} i\, {\rm sign}(k)w_k,\qquad
  H_{\rm od}(w):=-\sum_{k\ \mathrm{odd}} i\, {\rm sign}(k)w_k.
\]
For each $f\in C^\infty(\pl M)$, we denote by $\mc{P}(f)$ its harmonic extension to $(M,g)$. From now on, let us assume that $(M,g)$ is non-trapping, so that the forward exit time function $\tau_{g}^+:\partial SM\to[0,\infty)$ is everywhere finite, and the \emph{scattering map}
\begin{equation}\label{scatmap}
\sigma_g: \pl_-SM\cup \pl_0SM\to \pl_+SM\cup \pl_0SM, 
\qquad 
\sigma_g(y):=\varphi_{\tau^+_{SM}(y)}(y)
\end{equation} 
is well defined.

\begin{prop}\label{reducPU}
Let $f^*\in C^\infty(\pl M;\R)$ and $w\in W^{1,\infty}(SM;\R)$ such that $Xw=0$. Then
\begin{equation} \label{identite1}
(\sigma_{g}^*-{\rm Id})(H_{{\rm ev}}w)|_{\pl_-SM\cup \pl_0SM}=(\sigma_{g}^*-{\rm Id})(\pi_0^*f^*)|_{\pl_-SM\cup \pl_0SM}
\end{equation}
if and only if  $w_0|_M-i\mc{P}(f^*)$ is holomorphic.
\end{prop}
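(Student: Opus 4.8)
The plan is to rewrite both sides of \eqref{identite1} as geodesic X-ray transforms of $1$-forms on $M$ and to recognise the resulting $1$-form as the Cauchy--Riemann form of $q:=w_0-i\mc{P}(f^*)$; here $w_0$ denotes the zeroth fibrewise Fourier mode of $w$, which is fibrewise constant and real (as $w$ is real), hence descends to a real function on $M$, so that $q$ is a bona fide complex-valued function on $M$. First I would record a commutator computation. Decomposing $w=w_{\rm ev}+w_{\rm od}$ into even and odd fibrewise Fourier parts and using that $X$ reverses fibrewise parity while $Xw=0$, one gets $Xw_{\rm ev}=Xw_{\rm od}=0$; the Pestov--Uhlmann commutator relation $[H,X]u=X_\perp u_0+(X_\perp u)_0$ (see \cite{Pestov:2005jo,Paternain:2015ud}), applied to $u=w_{\rm ev}$ — for which $(w_{\rm ev})_0=w_0$ and $(X_\perp w_{\rm ev})_0=0$, again by parity — then yields $X(H_{\rm ev}w)=X(Hw_{\rm ev})=-X_\perp w_0$. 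Since $X_\perp w_0$ and $X(\pi_0^*\mc{P}(f^*))$ both have fibrewise Fourier degree $\pm1$, the standard identities $X\pi_0^*\phi=\pi_1^*(d\phi)$ and $X_\perp\pi_0^*\phi=\pi_1^*(\star\,d\phi)$ give
\[
X\bigl(H_{\rm ev}w-\pi_0^*\mc{P}(f^*)\bigr)=-\pi_1^*\beta,\qquad \beta:=\star\,dw_0+d\mc{P}(f^*)\in L^\infty(M;T^*M).
\]
Two facts about $\beta$: one has $d^*\beta=0$, since $d^*(\star\,dw_0)$ is a multiple of $\star\,d\,dw_0=0$ and $\mc{P}(f^*)$ is harmonic; and, by the Cauchy--Riemann equations on the oriented surface $(M,g)$, the condition $\beta\equiv0$ says precisely that $q$ is holomorphic.

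Next I would translate \eqref{identite1}. Since $\mc{P}(f^*)=f^*$ on $\pl M$, the boundary function $\pi_0^*f^*$ in \eqref{identite1} is the restriction to $\pl SM$ of $\pi_0^*\mc{P}(f^*)\in C^\infty(SM)$, and for any function $G$ on $SM$ that is Lipschitz along geodesic flow lines and every $y\in\pl_-SM\cup\pl_0SM$, the fundamental theorem of calculus along the geodesic through $y$ gives $(\sigma_g^*G-G)(y)=\int_0^{\tau^+_g(y)}(XG)(\varphi_t y)\,dt=I(XG)(y)$. This applies to $G=\pi_0^*\mc{P}(f^*)$ at once; it also applies to $G=H_{\rm ev}w$ because, even though the fibrewise Hilbert transform is unbounded on $L^\infty$ and $H_{\rm ev}w$ is a priori only in $L^2(SM)$, the identity $X(H_{\rm ev}w)=-X_\perp w_0\in L^\infty(SM)$ recorded above shows that $H_{\rm ev}w$ has a representative that is Lipschitz along geodesics, hence a well-defined trace on $\pl SM$ (the glancing directions $\pl_0SM$ being handled as in the proof of Lemma~\ref{lemsharaf}, using density of $\pl_-SM\setminus\Gamma_-=\pl_-SM$ in $\pl_-SM\cup\pl_0SM$ together with continuity along geodesics). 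Subtracting the two instances and using $X(H_{\rm ev}w-\pi_0^*\mc{P}(f^*))=-\pi_1^*\beta$, the identity \eqref{identite1} becomes equivalent to $I_1\beta=I(\pi_1^*\beta)=0$ on $\pl_-SM\cup\pl_0SM$.

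It then remains to close the equivalence. If $q$ is holomorphic then $\beta\equiv0$, so $I_1\beta=0$ is automatic; this is the ``if'' direction. For the converse, assume $I_1\beta=0$; since $(M,g)$ is non-trapping with no conjugate points and $\beta\in L^2(M;T^*M)$, Remark~\ref{lowregI1} gives $\beta=dp$ with $p\in H^1_0(M)$. Then $\Delta_g p=d^*dp=d^*\beta=0$, so $p$ is weakly harmonic, hence smooth, and $p|_{\pl M}=0$ forces $p\equiv0$ on the connected surface $M$. Therefore $\beta=dp=0$, i.e.\ $q=w_0-i\mc{P}(f^*)$ is holomorphic, which is the ``only if'' direction.

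The step I expect to be the main obstacle is the second one: giving the a priori merely $L^2$ function $H_{\rm ev}w$ a genuine trace on $\pl SM$ and justifying the fundamental-theorem-of-calculus formula along every geodesic — in particular along geodesics tangent to $\pl M$, where $\sigma_g$ and $\tau^+_g$ are discontinuous. The transport equation $X(H_{\rm ev}w)=-X_\perp w_0\in L^\infty$ produced in the first step is exactly what makes this work.
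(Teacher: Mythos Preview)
Your proof is correct and follows essentially the same route as the paper: both use the Pestov--Uhlmann commutator relation to reduce \eqref{identite1} to the condition $I_1\beta=0$ for a real $1$-form built from $w_0$ and an extension of $f^*$, and then invoke Remark~\ref{lowregI1}. The only cosmetic difference is that you work directly with the harmonic extension $\mc{P}(f^*)$ and use $d^*\beta=0$ to force the potential $p\in H_0^1(M)$ produced by Remark~\ref{lowregI1} to vanish, whereas the paper takes an arbitrary smooth extension $q$ of $f^*$ and identifies $q-p$ with $\mc{P}(f^*)$ a posteriori; your careful discussion of the trace of $H_{\rm ev}w$ on $\pl SM$ is a point the paper handles more implicitly (via $H_{\rm ev}:H^1\to H^1$ boundedness).
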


\begin{proof}
We first recall the Pestov-Uhlmann relation \cite[Theorem~1.5]{Pestov:2005jo}:
\begin{equation}\label{peuh}
H_{\rm od}Xw-XH_{\rm ev}w=X_\perp w_0,
\qquad
\forall w \in W^{1,\infty}(SM).
\end{equation}
Assume that  $w_0|_M-i\mc{P}(f^*)$ is holomorphic in $M$. Notice that for almost all $(x,v)\in SM$
\begin{align*}
X_\perp w_0(x,v)=
*dw_0(x)v= -d \mc{P}(f^*)(x)v
= -X\mc{P}(f^*)(x,v).
\end{align*}
where $*$ denotes the Hodge star.
Moreover, $IXu=(\sigma_{g}^*-{\rm Id})(u|_{\pl SM})$ for each $u\in 
W^{1,\infty} (SM)$. Therefore, by applying $I$ to both sides of equation~\eqref{peuh}, we obtain
\[ 
(\sigma_{g}^*-{\rm Id})(H_{\rm ev}w)|_{\pl_-SM\cup \pl_0SM}=IX\mc{P}(f^*)= (\sigma_{g}^*-{\rm Id})(\pi_0^*f^*)|_{\pl_-SM\cup \pl_0SM}.
\] 
Conversely, assume that~\eqref{identite1} holds. Let $q\in C^\infty(M;\R)$ such that $q|_{\pl M}=f^*$. Then
\begin{align*}
IXH_{\rm ev}w=
(\sigma_{g}^*-{\rm Id})(H_{{\rm ev}}w)|_{\pl_-SM\cup \pl_0SM}=
(\sigma_{g}^*-{\rm Id})(\pi_0^*f^*)|_{\pl_-SM\cup \pl_0SM}= I_1(dq).
\end{align*}
By the Pestov-Uhlmann relation~\eqref{peuh}, we have for almost all $(x,v)\in SM$
\begin{align*}
XH_{\rm ev}w(x,v)=-X_\perp w_0(x,v)=-*dw_0(x)v.
\end{align*}
Therefore $*dw_0+dq\in\ker I_1\cap L^2(M;T^*M)$. By Remark~\ref{lowregI1}, there exists $p\in H_0^1(M)$ real-valued such that $*dw_0=d(p-q)$. This implies that $w_0|_M+i(p-q)$ is holomorphic in $M$, and since $(q-p)|_{\pl M}=f^*$ we have $q-p=\mc{P}(f^*)$.
\end{proof}

We are now in position to prove Theorem \ref{th2}.

\begin{proof}[Proof of Theorem \ref{th2}.]
Since $g_1$ and $g_2$ agree to order $2$ at the boundary, we can identify a neighborhood $U_1\subset M_1$ of $\partial M_1$ with a neighborhood $U_2\subset M_2$ of $\partial M_2$ by means of a diffeomorphism, in such a way that the 2-jets of $g_1$ and $g_2$ agree on $\partial M_1\equiv\partial M_2$. Let $C$ be a collar extension for $M_1$. By means of the above identification, $C$ is a collar for $M_2$ as well. We extend $g_1$ to a smooth Riemannian metric on $N_1:=M_1\cup C$. We also extend $g_2$ to $N_2:=M_2\cup C$ by setting $g_2:=g_1$ on $C$. Notice that $g_1$ is a $C^1$ Riemannian metric, piecewise smooth on $N_2$. We denote by $X_i$ and $\varphi^{g_i}_t$ the geodesic vector field and the geodesic flow on the unit tangent bundle $SN_i$, and by $\mc{P}_i(f)$ the harmonic extension (with respect to the metric $g_i$) of a function $f\in C^\infty(\pl M_i)$ to $M_i$. Notice that $X_1$ is smooth, whereas $X_2$ is only Lipschitz and piecewise smooth with singularities contained in $\pl SM_2$. In particular, the geodesic flow $\varphi_t^{g_i}$ is Lipschitz. Up to shrinking the collar $C$, Lemma~\ref{l:no_conjugate_points} guarantees that $(N_i,g_i)$ is non-trapping.

Consider two functions $f,f^*\in C^\infty(\partial M_1)$ such that $\mc{P}_1(f)-i\mc{P}_1(f^*)$ is holomorphic on $(M_1,g_1)$. We wish to prove that $\mc{P}_2(f)-i\mc{P}_2(f^*)$ is holomorphic on $(M_2,g_2)$. This will mean that the set of restrictions at the boundary of holomorphic functions are the same for $(M_1,g_1)$ and $(M_2,g_2)$, and \cite[Theorem~1]{Belishev:2003ss} will imply the existence of a diffeomorphism $\psi:M_1\to M_2$ that extends the identification of $\partial M_1$ with $\partial M_2$ and satisfies $\psi^*g_2=e^\rho g_1$ for some $\rho\in C^\infty(M_1)$ with $\rho|_{\partial M_1}\equiv0$.

By Proposition \ref{Pinjective}, there exists $w'\in C^\infty(SN_1^\circ )$ so that $X_1w'=0$ and $w'_0=\mc{P}_1(f)$ in $M_1$. Proposition~\ref{reducPU} implies that
\begin{equation*} \label{identity_w'}
(\sigma_{g_1}^*-{\rm Id})(H_{{\rm ev}}w')|_{\pl_-SM_1\cup \pl_0SM_1}=(\sigma_{g_1}^*-{\rm Id})(\pi_0^*f^*)|_{\pl_-SM_1\cup \pl_0SM_1}
\end{equation*}
Since $X_1|_C=X_2|_C$,  $\sigma_{g_1}=\sigma_{g_2}$, and $(M_2,g_2)$ is non-trapping,  there exists a unique function $w'':SN_2^\circ\to\R$ such that $w''|_C=w'|_C$ and $X_2w''\equiv0$: it is simply given by $w''=w'$ on $C$ and $w''(y)=w'(\tau^+_{g_2}(y))$ for $y\in SM_2$. 
We claim that $w''$ is Lipschitz continuous. Indeed, it is smooth in $C$. Moreover, since $(M_2,g_2)$ is non-trapping, for each $y\in SM_2$ there exists $t\in\R$ such that $\varphi_t^{g_2}(y)\in C^\circ$. If $W\subset SN_2$ is a sufficiently small open neighborhood of $y$, we have that $\varphi_t^{g_2}(W)\subset C$. Therefore 
\[w''|_W=w''\circ\varphi^{g_2}_{-t}|_{C}\circ\varphi_t^{g_2}|_W = w''|_C\circ\varphi_t^{g_2}|_W =w'|_C\circ\varphi_t^{g_2}|_W, \]
which implies that $w''|_W\in W^{1,\infty}(W)$. 
Since $\sigma_{g_1}=\sigma_{g_2}$ and $w'|_{\partial SM_1}=w''|_{\partial SM_2}$, equation~\eqref{identity_w'} can be rewritten as
\begin{align*}
(\sigma_{g_2}^*-{\rm Id})(H_{{\rm ev}}w'')|_{\pl_-SM_2\cup \pl_0SM_2}=(\sigma_{g_2}^*-{\rm Id})(\pi_0^*f^*)|_{\pl_-SM_2\cup \pl_0SM_2}.
\end{align*}
Therefore, Proposition~\ref{reducPU} implies that $w''_0|_{M_2}-i\mc{P}_2(f^*)$ is holomorphic on $(M_2,g_2)$. Finally, since $w''_0|_{\partial M_2}=w'_0|_{\partial M_1}=f$, we have $w''_0|_{M_2}=\mc{P}_2(f)$.
\end{proof}

\subsection{Injectivity of the X-ray transform on tensors on surfaces}

Consider the fiberwise Fourier decomposition of some $f\in L^2(SM)$. By means of the pull-backs $\pi_{|k|}^*$ of the maps of Equation~\eqref{e:pi_m}, each summand $f_k$ can be viewed as a section of the line bundle $\mc{L}^k:=\otimes^k(T^*M)^{1,0}$ if $k>0$, or of the line bundle $\mc{L}^k:=\otimes^k(T^*M)^{0,1}$ if $k<0$. We set $H_k:=\pi_{|k|}^*(L^2(M;\mc{L}^k))$, so that $L^2(SM)=\oplus_{k\in\Z} H_k$. The operators 
\[\eta_\pm:= \demi(X\pm iX_\perp) : H_{k}\cap C^\infty(SM)\to H_{k\pm 1}\cap C^\infty(SM) \]
satisfy $\eta_+^*=-\eta_-$ on $H_0^1(SM)\cap H_k$.  If $k\geq0$, any $u\in H_k$ satisfying $\eta_-u=0$ is a holomorphic section of $\mc{L}^k$, whereas any $w\in H_{-k}$ satisfying $\eta_+w=0$ is a antiholomorphic section of $\mc{L}^{-k}$.

Using Proposition \ref{surjI_1*}, we show the following result that implies Theorem \ref{th3}. 
\begin{thm}\label{injImsurf}
Let $(M,g)$ be a compact Riemannian surface with boundary that is non-trapping and without conjugate points. If $m\geq0$ and $f\in H^1(M; \otimes_S^mT^*M)\cap\ker(I_m)$, there exists $p\in H^2(M; \otimes_S^{m-1}T^*M)$ such that $p|_{\pl M}\equiv0$ and $f=Dp$.
\end{thm}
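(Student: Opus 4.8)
The plan is to reduce the statement about symmetric $m$-tensors to the vanishing of an appropriate Fourier mode on $SM$, following the now-standard strategy for surfaces (as in \cite{Paternain:2013fx}, but adapted to the non-convex setting using the results of Sections 3 and 4). First I would pass to the unit tangent bundle: writing $f\in H^1(M;\otimes_S^m T^*M)$ as $\pi_m^*f=\sum_{|k|\le m,\ k\equiv m\,(2)}h_k$ with $h_k\in H_k$, the condition $I_mf=0$ means $I(\pi_m^*f)=0$, so by the Livsic-type Proposition~\ref{Livsic} (applied with $k=1$, using that $f|_{\partial M}$ contributes a function vanishing to the required order after the boundary normalization of Lemma~\ref{lemsharaf}) there exists $u\in H_0^1(SM)$ with $-Xu=\pi_m^*f$ and $u|_{\partial SM}=0$. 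The key point is then to show that $u$ has finite Fourier content, i.e. $u=\sum_{|k|<m} u_k$; this follows by applying $\eta_-$ (resp. $\eta_+$) to the equation $-(\eta_++\eta_-)u=\pi_m^*f$ and looking at the top Fourier modes: the modes $u_k$ with $|k|\ge m$ satisfy $\eta_-u_k=0$ for $k\ge m$ and $\eta_+u_k=0$ for $k\le -m$, hence are (anti)holomorphic sections, and a vanishing-boundary-value argument (using $u|_{\partial SM}=0$ and, on a non-trapping surface without conjugate points, that such holomorphic sections extend and must vanish — here one uses Proposition~\ref{surjI_1*} exactly as the conformal-class/$1$-form case) forces them to be zero.

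Once $u$ is supported in Fourier modes $|k|\le m-1$, I would define $p$ by contracting $u$: setting $v:=\sum_{|k|\le m-1} u_k$ and letting $p\in H^2(M;\otimes_S^{m-1}T^*M)$ be the symmetric $(m-1)$-tensor with $\pi_{m-1}^*p = $ (the appropriate constant multiple of) the sum of the $u_k$ with $k\equiv m-1\,(2)$, one checks from $-Xu=\pi_m^*f$ together with the relation $X\pi_{m-1}^*p=\pi_m^*(Dp)$ (and the fact that the "wrong parity" Fourier modes of $u$ must vanish separately, again by Livsic applied to each parity class since $\pi_m^*f$ has fixed parity $m$) that $f=Dp$. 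The boundary condition $p|_{\partial M}\equiv0$ comes from $u|_{\partial SM}=0$: the restriction of each $u_k$ to $\partial SM$ vanishes, hence so does $\pi_{m-1}^*p|_{\partial SM}$, which forces $p|_{\partial M}=0$.

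The regularity claim $p\in H^2$ needs a small extra argument: a priori $u\in H^1(SM)$ only, but since $u$ has finitely many Fourier modes and solves a first-order transport equation with $H^1$ right-hand side on a surface without conjugate points, one bootstraps one derivative. Concretely, $\eta_-u_{m-1}$ is determined by $\eta_+u_{m-3}$ and a mode of $\pi_m^*f\in H^1$, and inverting $\eta_-$ on sections with vanishing boundary trace (an elliptic $\bar\partial$-type operator on the disk-like pieces, or more robustly using the surjectivity statements of Propositions~\ref{Pinjective} and~\ref{surjI_1*} which already produce smooth/$H^{k-1}$ primitives when the data is smooth/$H^k$) gains a derivative; iterating over the finitely many modes gives $u_k\in H^2$, hence $p\in H^2$. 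For $m=0$ the statement is just injectivity of $I_0$ on $H^1$, which is Theorem~\ref{th4}(i) together with Remark~\ref{assreg}, and for $m=1$ it is Remark~\ref{lowregI1}; so the content is genuinely in $m\ge2$.

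The main obstacle I anticipate is the "finite Fourier content" step: showing that the top modes $u_k$, $|k|\ge m$, vanish. On a \emph{simple} surface this is classical because holomorphic sections of $\mc{L}^k$ with prescribed (vanishing) boundary data are controlled, but here the surface is only non-trapping without conjugate points and $\partial M$ need not be convex, so one cannot directly quote the convex-boundary results; instead one must feed the (anti)holomorphic section back through the transport equation and use Proposition~\ref{surjI_1*} (the divergence-free $1$-form surjectivity, which is precisely what replaces the missing $I_m^*I_m$ ellipticity near the glancing region) to conclude it is exact with zero boundary trace, hence zero. Making this induction on $|k|$ clean — and handling the parity bookkeeping so that the primitive $p$ is genuinely an $(m-1)$-tensor and not just an element of $\oplus H_k$ — is the technical heart of the proof.
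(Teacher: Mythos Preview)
Your overall architecture is right---induction on $m$, Livsic to produce a primitive $u\in H_0^1(SM)$, then establish finite Fourier content of $u$---and this is indeed the paper's route. But the step you flag as the ``main obstacle'' is wrong as written. From $-(\eta_++\eta_-)u=\pi_m^*f$ the Fourier-mode equations in degree $k>m$ read $\eta_+u_{k-1}+\eta_-u_{k+1}=0$; these are \emph{coupled} and do not give $\eta_-u_k=0$ for $k\ge m$. So the individual tail modes $u_k$ are not (anti)holomorphic sections, and there is no ``vanishing-boundary-value'' argument available for them. You are right that Proposition~\ref{surjI_1*} is the key input, but you have misidentified how it enters.

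The paper does not attack the tail of $u$ directly. It works on the top mode of $f$: write $f_m=\eta_+h_{m-1}+q_m$ with $q_m\in\ker\eta_-$ and $h_{m-1}|_{\partial M}=0$, and show $q_m=0$ by a pairing argument. Since $\mc{L}$ is holomorphically trivial on a surface with boundary, $q_m=aq_1^m$ with $a\in\ker\eta_-$ and $q_1\in\ker\eta_-\cap C^\infty(M;\mc{L})$. Propositions~\ref{Pinjective} and~\ref{surjI_1*} are then used to manufacture \emph{flow-invariant} functions $\alpha,w$ on $SM$ with $\alpha_0=a$ and $w_1=q_1$; the product $\hat w:=\alpha_{>}w_{>}^m$ is again invariant, lives in $\bigoplus_{k\ge m}H_k$, and has $\hat w_m=q_m$. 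Pairing against the transport equation gives
\[
\|q_m\|_{L^2}^2=\langle \hat w,\,X(u-h_{m-1})\rangle_{L^2}=\langle X\hat w,\,h_{m-1}-u\rangle_{L^2}=0.
\]
Once $q_m=0$ one replaces $u$ by $u-h_{m-1}-\bar h_{m-1}$, whose $X$-derivative now lies in $\bigoplus_{|j|\le m-2}H_j$, and invokes the induction hypothesis at level $m-1$; that is how finite Fourier content is actually obtained. Finally, the $H^2$ regularity of $p$ is immediate from the ellipticity of $D$---no mode-by-mode bootstrapping is needed.
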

\begin{proof} We use an argument somehow similar to \cite[Theorem 9.3]{Paternain:2014lr} based on an induction on $m$. For notational simplicity, we shall identify $H_j$ with $L^2(M;\mc{L}^j)$ through $\pi_j^*$.
Assume that we have proved that for $m\geq 2$, if $f\in H^1(M;\otimes_S^{m-1} T^*M)$ and $Xu=\pi_{m-1}^*f$ for some $u\in H^1(SM)$ with $u|_{\pl SM}=0$, then $u\in \pi_{m-2}^*(H^1(M;\otimes_S^{m-2}T^*M))$.
Let now $f\in H^1(M;\otimes_S^m T^*M)$ be real valued and decompose it as $\pi_m^*f=\sum_{k=0}^m f_{m-2k}$ with 
$f_j\in H_j$ using  \eqref{decompHk}; 
note that $\bbar{f_j}=f_{-j}$. Using the orthogonal decomposition 
$H^1(M;\mc{L}^m)=\ker \eta_-\oplus {\rm Im}\, \eta_+$ where $\eta_+$ acts on 
$H^2(M;\mc{L}^{m-1})\cap H_0^1(M;\mc{L}^{m-1})$, we can write $f_m=\eta_+h_{m-1}+q_m$
with $\eta_-q_m=0$ and $h_{m-1}\in H^2(M;\mc{L}^{m-1})$ such that $h_{m-1}|_{\pl M}=0$. Since $I_mf=0$, Proposition \ref{Livsic} implies that there is $u\in H^1(SM)$ such that $u|_{\pl SM}=0$ and $Xu=\pi_m^*f$. 
We have $X(u-h_{m-1})=q_m-\eta_-(h_{m-1})+\sum_{k=1}^{m}f_{m-2k}$.
Using that $\mc{L}$ is holomorphically trivial (since $M$ has boundary), there is 
$q_1\in C^\infty(M;\mc{L})\cap \ker \eta_-$ such that $q_m=aq_1^{m}$ for some holomorphic 
$a\in H^1(M)\cap \ker \eta_-$. By Proposition \ref{surjI_1*}, there is $w\in H^k(SM)$ 
for $k\in \nn$ as large as we like such that ${\pi_1}_*w=(2\pi)^{-1}q_1$ and $Xw=0$ on $SM$ (we used $D^*q_1=\eta_-q_1=0$), and there is $\alpha\in L^2(SM)$ such that $X\alpha=0$ in $H^{-1}(SM)$ and ${\pi_0}_*\alpha=(2\pi)^{-1}a$.
Note that we can assume that $w(x,-v)=-w(x,v)$ and $\alpha(x,-v)=\alpha(x,v)$
since $X$ maps even functions with respect to the involution $(x,v)\mapsto (x,-v)$ to odd functions and conversely.
 For each $u\in L^2(SM)$, we shall denote $u_>:=\sum_{j\geq 0}u_j$ if $u=\sum_{j\in\zz} u_j$ is the Fourier decomposition in fibers of $u$.
It is then direct to check that $X(w_>)=0$ and $X(\alpha_>)=0$ by using that 
$\eta_-q_1=0$, $\eta_-a=0$. Consequently, if $\hat{w}:=\alpha_>w_{>}^m\in L^2(SM)$, we get $X\hat{w}=0$ in $H^{-1}(SM)$  and the Fourier decomposition of $\hat{w}$ is of the form
$\hat{w}=\sum_{k\geq m}\hat{w}_k$ with  $\hat{w}_m=aq_1^m$. 
Then 
\[ \|q_m\|_{L^2}^2=\cjg \hat{w}_m,q_m\cjd_{L^2}
=\cjg \hat{w},X(u-h_{m-1})\cjd_{L^2}=\cjg X\hat{w},h_{m-1}-u\cjd_{L^2}=0.\]
We also have $f_{-m}=\bbar{f_m}=\eta_-\bbar{h_{m-1}}$ thus 
$X(u-h_{m-1}-\bbar{h_{m-1}})\in \oplus_{|j|\leq m-2} H_{j}$ with $H^1(SM)$ regularity. Then by using the induction assumption, we have that $u-h_{m-1}-\bbar{h_{m-1}}\in \sum_{|j|\leq m-2}H_j$, which proves the induction property at step $m$. Since the induction property is also true for $m=2$ by Theorem \ref{th4} (more precisely by Remark \ref{lowregI1}), 
the proof is done. The $H^2$ regularity of $p$  comes from ellipticity of $D$.
\end{proof}

\section{Lens and boundary rigidity}

\subsection{Geodesics on compact surfaces without conjugate points}

Before proving Theorem~\ref{th1}, we need a few preliminary statements concerning geodesics on surfaces. 
The first lemma is a geodesic analogue of the classical Poincar\'e-Bendixson's Theorem, and we leave its proof as an exercise. As usual, by simple closed geodesic we mean a geodesic that is the image of a smooth embedding of a circle into the Riemannian manifold. 

\begin{lem}\label{l:omega_limit}
Let $(M,g)$ be a compact, simply connected, Riemannian surface (with or without boundary). If $\gamma:[0,\infty)\hookrightarrow M$ is a geodesic parametrized with constant speed and without self-intersections, then the $\omega$-limit set of $\gamma$ contains a simple closed geodesic.
\hfill\qed 
\end{lem}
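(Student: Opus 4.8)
The plan is to run a Poincar\'e--Bendixson type argument for the geodesic flow, the simple connectedness of $M$ being precisely what forces the monotonicity of the return map. Rescaling $g$, we may assume that $\gamma$ has unit speed, so that $\Gamma(t):=(\gamma(t),\dot\gamma(t))$ is an orbit of the geodesic flow $\varphi_t$ on the compact manifold $SM$, which has no stationary points. First I would set $\Omega:=\bigcap_{T\ge0}\overline{\{\Gamma(t):t\ge T\}}\subset SM$ and record that $\Omega$ is nonempty, compact, connected, and invariant under $\varphi_t$ in forward and backward time (for fixed $s$ one has $t_n+s\to\infty$), and that $\pi_0(\Omega)$ is exactly the $\om$-limit set of $\gamma$ in $M$. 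It therefore suffices to exhibit a periodic orbit of $\varphi_t$ inside $\Omega$: its projection is a closed geodesic contained in the $\om$-limit set of $\gamma$, and by the next step it is automatically simple.

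The second step is the following regularity of the orbits in $\Omega$: an orbit $\mc{O}\subset\Omega$ is either periodic, in which case its projection to $M$ is a \emph{simple} closed geodesic, or it projects to a geodesic of $M$ without self-intersections. To see this, let $\delta$ denote the projection of $\mc{O}$, which is defined on all of $\rr$ and stays in $M$ by the invariance of $\Omega$. If $\delta$ had a \emph{transverse} self-intersection, then, this being an open condition on the initial data of a geodesic, every geodesic with initial data $C^1$-close to that of $\delta$ would self-intersect transversally within a uniformly bounded time; since $\Gamma(t_n+s)$ converges to the corresponding point of $\mc{O}$ for every fixed $s$, a finite piece of $\gamma$ would self-intersect, contradicting the hypothesis. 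A self-intersection $\delta(a)=\delta(b)$ with $a\ne b$ and $\dot\delta(a)=\dot\delta(b)$ forces $\mc{O}$ to be periodic, and then non-simplicity would again give a robust transverse self-intersection, excluded as above; while $\dot\delta(a)=-\dot\delta(b)$ forces, by uniqueness of geodesics, that $\delta(b+u)=\delta(a-u)$ for all $u$, i.e.\ $\delta$ is symmetric about the time $\tfrac{a+b}{2}$, hence $\dot\delta(\tfrac{a+b}{2})=0$, contradicting unit speed.

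The third step produces the periodic orbit, and here the topology of $M$ enters. Assume no orbit of $\Omega$ is periodic and seek a contradiction. Pick $\xi\in\Omega$, let $\eta\in\Omega$ be a point of the $\om$-limit set of the orbit through $\xi$, and let $\delta$ be the projection of that orbit, an embedded geodesic of $M$ by Step~2. The orbit returns to a small flow-box transversal $\mc{S}\ni\eta$ in $SM$ at times $s_1<s_2<\dots\to\infty$; after shrinking, the corresponding passages of $\delta$ through a small disk $D\ni\pi_0(\eta)$ are pairwise disjoint embedded arcs crossing $D$ with velocity close to that of $\eta$, and one checks --- using again that $\delta$ is embedded, which rules out nearby anti-parallel strands --- that a short transversal arc $\Sigma\subset D$ is crossed by $\delta$ exactly at these successive passages. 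After these reductions, the classical Poincar\'e--Bendixson argument applies in the surface $M$: each consecutive pair of crossings, joined by a sub-arc of $\Sigma$, bounds a Jordan curve, which by the simple connectedness of $M$ separates $M$ into two regions, and $\delta$ can re-enter $\Sigma$ only on one definite side; hence the successive crossing points of $\delta$ with $\Sigma$ are strictly monotone along $\Sigma$ and converge to a point, so $\Omega$ meets $\Sigma$ near $\pi_0(\eta)$ in a single point and the returns $\varphi_{s_k}(\xi)$ converge to some $\xi_\infty\in\mc{S}\cap\Omega$. The standard endgame of Poincar\'e--Bendixson then shows that the orbit of $\xi_\infty$ returns to $\xi_\infty$, i.e.\ is periodic --- contradicting our assumption. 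Hence $\Omega$ contains a periodic orbit, whose projection is, by Step~2, a simple closed geodesic inside the $\om$-limit set of $\gamma$.

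The main obstacle is exactly the monotonicity in Step~3: the geodesic flow lives on the three-dimensional manifold $SM$, whereas the Jordan separation that makes the returns monotone is a two-dimensional phenomenon in $M$; bridging the two requires using the embeddedness of $\gamma$ (and of the limiting geodesics) in $M$ to turn the transversal in $SM$ into a genuine planar transversal argument, and in particular to rule out that the curve re-crosses the chosen transversal ``the other way'' between two consecutive counted returns. Everything else --- the properties of $\Omega$, the regularity of its orbits, and the routine modifications when $\pi_0(\eta)\in\pl M$ (in which case the limiting velocity is forced to be tangent to $\pl M$, by invariance of $\Omega$, and $\Sigma$ is taken into $M^\circ$) --- is standard.
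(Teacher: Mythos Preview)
The paper does not actually prove this lemma: it is announced as ``a geodesic analogue of the classical Poincar\'e--Bendixson Theorem'' and left as an exercise, so there is nothing to compare your argument against.  Your outline is the natural one, and Steps~1 and~2 are correct as written --- in particular, the robustness of transverse self-intersections and the reflection argument excluding anti-aligned self-intersections are clean.  Let me also record a consequence of your Step~2 argument, applied to $\gamma$ rather than to $\delta$, that you will want: if $(p,v)$ and $(p,w)$ both lie in $\Omega$ with $w\neq\pm v$, then $\gamma$ has long arcs $C^1$-close to the two transverse geodesics through $p$ in directions $v$ and $w$, which forces a transverse self-intersection of $\gamma$.  Hence $\Omega\cap S_pM\subset\{(p,v_p),(p,-v_p)\}$ for every $p\in\pi_0(\Omega)$.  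This already shows that any ``spurious'' crossing of $\delta$ with your transversal $\Sigma$ must be anti-parallel.

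The genuine gap is the remaining assertion in Step~3 that embeddedness of $\delta$ ``rules out nearby anti-parallel strands,'' so that $\Sigma$ is crossed \emph{only} at the successive returns close to $\eta$.  Embeddedness alone does not give this: two disjoint anti-parallel geodesic arcs can sit in an arbitrarily small disk (think of two nearby parallel lines traversed in opposite directions), so there is no local obstruction.  Without this, consecutive crossings of $\delta$ with $\Sigma$ may alternate direction, and then the Jordan-curve bookkeeping no longer yields monotonicity in the straightforward way you describe --- one can write down planar configurations where a $+$ crossing followed by a $-$ crossing bounds a region lying entirely to one side of $\Sigma$, after which the next crossing is unconstrained.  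One way to close the gap is to run the transversal argument on $\gamma$ itself rather than on $\delta$: using the fiber constraint above, all crossings of $\gamma$ with a short $\Sigma$ are in direction $\approx\pm v_p$, and a more careful case analysis of consecutive $\pm$ crossings (still using only that $\gamma$ is embedded and $M$ simply connected) shows that the crossing points can accumulate on at most one point of $\Sigma$; this forces $\Omega\cap\pi_0^{-1}(\Sigma)$ to project to a single point, and then any orbit of $\Omega$ hitting that fiber is periodic.  Your identification of this step as the crux is accurate, but it needs more than the one clause you have given it.
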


Let $(M,g)$ be a Riemannian manifold. For each $x,y\in M$, we set
\begin{align*}
\Omega_{xy}M
 :=
\big\{
\gamma\in H^{1}([0,1];M)\ |\ \gamma(0)=x,\ \gamma(1)=y
\},\qquad 
\Omega_xM  :=\Omega_{xx}M.
\end{align*}
We consider the energy and length functions on $\Omega_{xy}M$, which are given by
\begin{align*}
\energy(\gamma) = \int_0^1 \|\dot\gamma(t)\|^2_g\,d t,\qquad 
\length(\gamma) = \int_0^1 \|\dot\gamma(t)\|_g\,d t.
\end{align*}
Notice that $\length(\gamma)^2\leq\energy(\gamma)$, with equality if and only if $\gamma$ has constant speed. These two functions have the same critical points, which are precisely the geodesics parametrized with constant speed and, when $x=y$, the constant curves at $x$. Moreover, a geodesic $\gamma\in\Omega_{xy}M$ is a local minimum of one of these functions if and only if it is a local minimum of the other one. The only reason for introducing the energy $E$ here is that it has better functional properties: smooth regularity and the Palais-Smale condition, see e.g.~\cite{Klingenberg:1978so}. We will denote the sublevel sets of the energy function by
\begin{align*}
 \Omega_{xy}^{\leq\ell}M
& :=
\big\{
\gamma\in\Omega_{xy}M\ \big|\ \energy(\gamma)\leq \ell^2
\big\},\qquad \ell>0.
\end{align*}

\begin{lem}\label{l:no_self_intersections}
Any simply connected, Riemannian surface without conjugate points does not have geodesics with self-intersections.
\end{lem}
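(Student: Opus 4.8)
The plan is to argue by contradiction: a self-intersection produces a non-constant geodesic \emph{loop}, and then Morse theory for the energy on the based loop space, combined with the absence of conjugate points, yields a contradiction. \textbf{Step 1 (reduction to a loop).} If a geodesic $\gamma$ has a self-intersection $\gamma(a)=\gamma(b)=p$ with $a<b$, then $\gamma|_{[a,b]}$, reparametrized on $[0,1]$, is a non-constant element $c\in\Omega_pM=\Omega_{pp}M$ which is a geodesic loop based at $p$. In particular $c$ and the constant loop $\bar p$ at $p$ are two \emph{distinct} critical points of the energy $E$ on $\Omega_pM$.

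\textbf{Step 2 (the two key inputs).} First, $\Omega_pM$ is path-connected, since $\pi_0(\Omega_pM)\cong\pi_1(M,p)=0$ by simple connectedness. Second — and this is the geometric heart — because $(M,g)$ has no conjugate points, every critical point of $E$ on $\Omega_pM$ is a \emph{non-degenerate local minimum}. Indeed, the Hessian of $E$ at a geodesic loop $\sigma:[0,1]\to M$ is the index form on $H^1$ vector fields along $\sigma$ vanishing at the endpoints; its nullity equals the dimension of the space of Jacobi fields along $\sigma$ vanishing at both $0$ and $1$, which is $0$ (otherwise $p$ would be conjugate to itself along $\sigma$), and its Morse index equals the number of points conjugate to $\sigma(0)$ along $\sigma|_{(0,1)}$, again $0$; at the constant loop the Hessian is $V\mapsto\int_0^1\|\dot V\|_g^2\,dt$, positive definite on fields vanishing at the endpoints. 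Thus $E$ is a Morse function on $\Omega_pM$, and (following Klingenberg \cite{Klingenberg:1978so}) it is smooth, bounded below, and satisfies the Palais--Smale condition.

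\textbf{Step 3 (Morse theory).} By Morse theory on Hilbert manifolds, $\Omega_pM$ has the homotopy type of a CW complex with one $k$-cell for each critical point of index $k$. Since there are no critical points of index $1$, this complex has no $1$-cells, so its $1$-skeleton is the discrete set of $0$-cells; cells of dimension $\geq 2$ are attached along maps into this discrete set, hence at single points, and cannot join distinct $0$-cells. Therefore $\Omega_pM$ has exactly as many path-components as it has critical points of index $0$. But $\Omega_pM$ is connected while $c\neq\bar p$ are two critical points of index $0$ — a contradiction. Hence $(M,g)$ has no geodesic loops, and a fortiori no geodesic with self-intersections.

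\textbf{Main difficulty.} The geometric content (the index computation in Step 2) is elementary; the care is in the functional-analytic framework on the \emph{based} loop space $\Omega_{pp}M$ — smoothness of $E$ and the Palais--Smale condition — and, when $\pl M\neq\varnothing$, in ensuring that the geodesic loop of Step 1 can be taken inside $M$, or equivalently in working on a slightly enlarged closed surface as in Lemma~\ref{l:no_conjugate_points}. These points are standard but must be addressed.
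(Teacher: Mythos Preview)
Your Morse-theoretic argument is correct in spirit and captures the same mechanism as the paper's proof: the absence of conjugate points forces every geodesic loop to be a non-degenerate local minimum of $E$, simple connectedness makes $\Omega_pM$ connected, and a mountain-pass argument then rules out two distinct local minima. The paper runs this minmax by hand via Birkhoff shortening rather than through the CW-complex statement, but the underlying idea is the same.

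There is, however, a genuine gap in your handling of the boundary. Your suggested fix via Lemma~\ref{l:no_conjugate_points} does not work as stated: that lemma requires the trapped set to be hyperbolic and contained in $SM^\circ$, which is not known at this point --- non-trapping is only established in Lemma~\ref{l:scs_non_trapping}, which \emph{uses} the present lemma, so the appeal is circular --- and in any case it produces a slightly larger manifold \emph{with boundary}, not a closed surface, so the issue persists. On a manifold with boundary, $E$ on $\Omega_pM$ does not satisfy Palais--Smale (minimizing sequences can collapse onto the boundary), and the Morse-theoretic CW structure is unavailable without further work.

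The paper resolves this by localizing: rather than working on $\Omega_pM$, it works on $\Omega_xD$, where $D\subset M$ is the compact disk bounded by the simple loop $\gamma$. When $D$ is convex (i.e.\ $\dot\gamma(1)$ does not point into $D$), Birkhoff shortening preserves $\Omega_x^{\leq\ell}D$, and the minmax over a homotopy from $\gamma$ to the constant loop produces a non-minimizing geodesic loop in $D$, contradicting the index computation. The non-convex case is reduced to the convex one by extending $\gamma$ forward inside $D$ and invoking the Poincar\'e--Bendixson-type Lemma~\ref{l:omega_limit} to find either a simple closed geodesic or a new self-intersection bounding a convex disk. Your argument can be repaired along exactly these lines: once $D$ has been arranged to be convex, run your Morse theory on $\Omega_xD$ instead of $\Omega_xM$.
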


\begin{proof}
Assume by contradiction that one such Riemannian surface $(M,g)$ contains a geodesic $\gamma:[0,1]\to M$ such that $\gamma(0)=\gamma(1)=:x$. Since $M$ is simply connected, $\gamma$ bounds a compact disk $D\subset M$.
We have two cases to consider. The first one is when $\dot\gamma(1)$ does not point inside $D$, meaning that it either points outside $D$ or $\dot\gamma(1)=\dot\gamma(0)$. In this case, $D$ is convex (although not strictly), and a well known shortening procedure due to Birkhoff provides continuous maps $B=B_\ell:\Omega_x^{\leq\ell}D\to\Omega_x^{\leq\ell}D$, for $\ell>0$, such that $\energy(B(\zeta))\leq\energy(\zeta)$, the fixed points of $B$ are precisely the geodesic loops in $\Omega_x^{\leq\ell}D$, and every sequence $\{B^n(\zeta)\ |\ n\in\N\}$ converges to a geodesic loop or to a constant as $n\to\infty$, see \cite[Prop.~A.1.2]{Klingenberg:1978so}. Since $(M,g)$ has no conjugate points, the Morse Index Theorem from Riemannian geometry implies that the geodesic loop $\gamma$ is a local minimizer of the energy function, that is, for any loop $\zeta\in \Omega_xD\setminus\{\gamma\}$ that is sufficiently close to $\gamma$ we have $\energy(\gamma)<\energy(\zeta)$. Actually, for any sufficiently small open neighborhood $U\subset\Omega_xM$ of $\gamma$, there exists $\epsilon>0$ such that $\energy|_{\partial U}\geq\energy(\gamma)+\epsilon$. We fix one such open set $U$ that is small enough so that it does not contain the stationary curve at $x$, and we consider the corresponding $\epsilon>0$. Let $h:[0,1]\to\Omega_xD$ be a homotopy such that $h(0)=\gamma$ and  $h(1)$ is the stationary point at $x$, and fix $\ell>0$ large enough so that $h([0,1])\subset\Omega_x^{\leq\ell}D$. Notice that $B^n(h(0))=\gamma$ and $B^n(h(1))=h(1)$ for all $n\in\N$. We set $s_n\in[0,1]$ such that 
\[\energy(B^n(h(s_n)))=\max\big\{\energy(B^n(h(s)))\ \big|\ s\in[0,1] \big\}\]
Notice that $\energy(B^n(h(s_n)))\geq \energy(\gamma)+\epsilon$. By a standard compactness argument, a subsequence of $\{B^n(h(s_n))\ |\ n\in\N\}$ converges to a geodesic loop $\zeta\in\Omega_xM$ that is not a local minimum of the energy function, but then the Morse Index Theorem implies that $\zeta$ contains a pair of conjugate points. This gives a contradiction.

It remains to consider the case when $\dot\gamma(1)$ points inside $D$. The argument of the previous paragraph cannot be directly applied, for the Birkhoff shortening map $B$ does not preserve the disk $D$ anymore. Instead, let us extend the geodesic $\gamma$ to its maximal non-negative interval of definition $I\subset[0,\infty)$. If the extended $\gamma:I\to M$ does not have other self-intersections other than at time $0$ and $1$, then $I=[0,\infty)$ and the curve $\gamma$ is entirely contained in $D$; by Lemma~\ref{l:omega_limit}, the $\omega$-limit set of $\gamma$ must contain a simple closed geodesic $\gamma'$; by applying the argument of the previous paragraph to $\gamma'$ we thus obtain a contradiction. Therefore, $\gamma:I\to M$ must have other self intersections. In particular there exists a compact sub-interval $[a,b]\subset I$ such that $\gamma|_{(a,b)}$ is without self intersections, $\gamma(a)=\gamma(b)$, and $\dot\gamma(b)$ does not point inside the disk $D'$ bordered by $\gamma|_{[a,b]}$. The argument of the previous paragraph applied to $\gamma|_{[a,b]}$ provides a contradiction.
\end{proof}

\begin{lem}\label{l:no_intersections}
In a simply connected, Riemannian surface without conjugate points, no pair of distinct points can be joined by more than one geodesic.
\end{lem}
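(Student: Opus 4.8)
\emph{Proof proposal.} The plan is to argue by contradiction, reducing to a geodesic bigon bounding a convex disk and then running the same shortening/Morse-theoretic mechanism as in the proof of Lemma~\ref{l:no_self_intersections}. Suppose $x\neq y$ are joined by two distinct geodesics $\gamma_1\neq\gamma_2$. By Lemma~\ref{l:no_self_intersections} each $\gamma_i$ is an embedded arc, and since a geodesic is determined by its $1$-jet at any point, $\gamma_1$ and $\gamma_2$ share no common subarc. Taking a maximal open subarc of $\gamma_1$ disjoint from $\gamma_2$, one obtains points $p,q\in\gamma_1\cap\gamma_2$ and embedded subarcs $\alpha_i\subset\gamma_i$ from $p$ to $q$ with $\alpha_1\cap\alpha_2=\{p,q\}$, so that $\alpha_1\cup\alpha_2$ is an embedded circle; since $M$ is simply connected this circle bounds a closed disk $D\subset M$. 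The two interior angles $\theta_p,\theta_q$ of $D$ lie in $(0,2\pi)$: they cannot vanish, for otherwise $\dot\alpha_1$ and $\dot\alpha_2$ would agree at that vertex and the geodesics would coincide.

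Next I would arrange that $D$ is \emph{weakly convex}, i.e.\ $\theta_p,\theta_q\leq\pi$. Assume, say, $\theta_q>\pi$. Then the maximal geodesic $\delta$ extending $\alpha_1$ beyond $q$ points into $D^\circ$ at $q$ — its outgoing direction sits at angle $\pi$, strictly inside the angular sector $(0,\theta_q)$ of $D$ at $q$ — and by Lemma~\ref{l:no_self_intersections} it is embedded and cannot meet $\alpha_1$. Hence one of two things happens. Either $\delta$ stays in $D$ for all forward times, in which case by Lemma~\ref{l:omega_limit} its $\omega$-limit set contains a simple closed geodesic, and applying the argument of the first paragraph of the proof of Lemma~\ref{l:no_self_intersections} to this closed geodesic (viewed as a geodesic loop bounding a convex disk) produces a pair of conjugate points, a contradiction. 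Or $\delta$ first meets $\partial D$ at an interior point $r$ of $\alpha_2$, in which case $\delta|_{[q,r]}$ and $\alpha_2|_{[q,r]}$ form a new embedded geodesic bigon bounding a disk $D'\subset D$, with corner angle $\theta_q-\pi\in(0,\pi)$ at $q$ and a corner angle in $(0,\pi)$ at the transversal crossing $r$. Performing this replacement at a reflex corner (at most once suffices, since the new bigon has both corners convex), we may henceforth assume that $D$ is a weakly convex disk bounded by two distinct geodesic arcs $c_1,c_2$ of $(M,g)$ from $x$ to $y$.

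Finally I would run Birkhoff's curve-shortening process inside $D$ on the space $\Omega_{xy}D$ of paths in $D$, in exactly the same way as in the proof of Lemma~\ref{l:no_self_intersections}. Since $(M,g)$ has no conjugate points, the Morse Index Theorem makes the index form of $c_i$ positive definite on variations vanishing at the endpoints, so $c_1$ and $c_2$ are strict local minima of the energy $E$ on $\Omega_{xy}M$, hence on $\Omega_{xy}D$. Fixing $\ell>0$ and a homotopy $h:[0,1]\to\Omega_{xy}^{\leq\ell}D$ from $c_1$ to $c_2$ (possible because $D$ is simply connected), iterating the shortening map $B_\ell$ on $h$, and extracting a limit of $B_\ell^n(h(s_n))$ at max-points $s_n$, one obtains a geodesic $\zeta$ from $x$ to $y$ with $E(\zeta)\geq\max\{E(c_1),E(c_2)\}+\epsilon$ that is not a local minimum of $E$; by the Morse Index Theorem $\zeta$ then contains a pair of conjugate points, contradicting the hypothesis. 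The main obstacle is to justify this last step rigorously in the presence of a non-convex ambient boundary: exactly as in the proof of Lemma~\ref{l:no_self_intersections}, one must check that the Birkhoff shortening preserves the weakly convex disk $D$ and that its limits are genuine geodesics of $(M,g)$ — this is where the convexity of $D$ arranged above is used. The remaining points (the termination of the alternative in which $\delta$ stays in $D$, and the standard mountain-pass bookkeeping with small neighborhoods of $c_1$ and $c_2$) are routine.
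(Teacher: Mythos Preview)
Your proposal is correct and follows essentially the same approach as the paper. Both argue by contradiction, reduce to a geodesic bigon bounding a compact disk, arrange for the disk to be (weakly) convex by extending one of the geodesics through a reflex corner (invoking Lemmas~\ref{l:omega_limit} and~\ref{l:no_self_intersections} to rule out the extension being trapped in $D$), and then run the Birkhoff shortening/minmax argument from the proof of Lemma~\ref{l:no_self_intersections} on the resulting convex disk to produce a non--locally-minimizing geodesic, hence a pair of conjugate points. The only cosmetic difference is that the paper extends $\gamma_0$ in \emph{both} time directions and picks up two new intersection points with $\gamma_1$, whereas you extend past a single reflex vertex and correctly observe that this already yields a bigon whose two corners are transversal crossings and therefore convex.
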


\begin{proof}
Let $(M,g)$ be a compact, simply connected, Riemannian surface without conjugate points. If $\gamma_0:[0,1]\to M$ and $\gamma_1:[0,1]\to M$ are distinct geodesics with the same endpoints $x:=\gamma_0(0)=\gamma_1(0)$ and $y:=\gamma_0(1)=\gamma_1(1)$, we have that $\dot\gamma_0(0)$ and $\dot\gamma_1(0)$ are linearly independent, and $\gamma_0(t)\neq\gamma_1(s)$ for all $t>0$ and $s>0$ sufficiently small. Therefore, up to replacing $\gamma_0$ and $\gamma_1$ with two suitable subcurves, we can assume without loss of generality that $\gamma_0$ and $\gamma_1$ only intersects at their endpoints $x$ and $y$. The closed curve $\gamma_0*\overline\gamma_1$ bounds a disk $D\subset M$.

There are two cases to consider. The first, easier, one is when the disk $D$ is (not strictly) convex, that is, the unsigned angles formed by $\gamma_0$ and $\gamma_1$ measured from inside $D$ are strictly less $\pi$. In this case, we reach a contradiction by means of a minmax argument as in the first paragraph of the proof of Lemma~\ref{l:no_self_intersections}.

The remaining case to consider is the one in which  $D$ is not convex. This means that at least one vector between $\dot\gamma_0(1)$ and $-\dot\gamma_0(0)$ points inside $D$. Let us extend the geodesic $\gamma_0$ to its maximal interval of definition $I\subset\R$. We set $b_0:=\sup\{t\in I\ |\ \gamma_0([1,t])\subset D\}$. Lemma~\ref{l:no_self_intersections} implies that $\gamma_0:I\to M$ has no self-intersections. The time $b_0$ is finite, for otherwise, by Lemma~\ref{l:omega_limit}, the $\omega$-limit set of $\gamma_0$ would contain a simple closed geodesic in $D$, which again is prevented by Lemma~\ref{l:no_self_intersections}. Therefore, there exists $b_1\in(0,1]$ such that $\gamma_0(b_0)=\gamma_1(b_1)$. Analogously, the time $a_0:=\inf\{t\in I\ |\ \gamma_0([t,0])\subset D\}$ is finite, and there exists $a_1\in[0,b_1)$ such that $\gamma_0(a_0)=\gamma_1(a_1)$. Summing up, the geodesics $\gamma_0|_{[a_0,b_0]}$ and  $\gamma_1|_{[a_1,b_1]}$ join the same endpoints, do not intersect elsewhere, and the loop $\gamma_0|_{[a_0,b_0]}*\overline{\gamma_1|_{[a_1,b_1]}}$ bounds a convex disk $D'$. We thus repeat the arguments of the previous paragraph for $D'$ instead of $D$, and obtain a contradiction.
\end{proof}

\begin{lem}\label{l:scs_non_trapping}
Any compact, simply connected, Riemannian surface with no conjugate points is non-trapping.
\end{lem}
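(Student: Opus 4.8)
The plan is to argue by contradiction, using the two preceding lemmas essentially as black boxes. Suppose $(M,g)$ were trapping. Then $\tau_g^+$ is not everywhere finite, so there is a point $(x,v)\in\partial SM$ with $\tau_g^+(x,v)=+\infty$; equivalently $(x,v)\in\Gamma_-$, so the forward geodesic $\gamma:=\gamma_{x,v}$ satisfies $\varphi_t(x,v)\in SM$ for all $t\ge 0$. Since the ambient geodesic flow on $SN$ is complete, this yields a genuine unit-speed geodesic $\gamma|_{[0,\infty)}\colon[0,\infty)\to M$ that never leaves $M$.

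First I would invoke Lemma~\ref{l:no_self_intersections}: because $M$ is simply connected and has no conjugate points, no geodesic of $(M,g)$ has a self-intersection, so in particular $\gamma|_{[0,\infty)}$ is an injective (hence embedded) constant-speed geodesic. Since $M$ is compact, the $\omega$-limit set of $\gamma|_{[0,\infty)}$ is nonempty, and Lemma~\ref{l:omega_limit} --- the geodesic analogue of Poincar\'e--Bendixson --- then produces a simple closed geodesic $\gamma'$ contained in this $\omega$-limit set.

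The contradiction is immediate: $\gamma'$ is a closed geodesic, so, regarded as a geodesic line $\gamma'\colon\R\to M$ of period $T>0$, it satisfies $\gamma'(0)=\gamma'(T)$ with $\dot\gamma'(0)=\dot\gamma'(T)$, i.e.\ it is a geodesic with a self-intersection, which is forbidden by Lemma~\ref{l:no_self_intersections}. Hence $(M,g)$ must be non-trapping. I do not expect any genuine technical obstacle here, since all the substance has been placed into Lemmas~\ref{l:no_self_intersections} and~\ref{l:omega_limit}; the only step needing a line of care is the passage from the failure of the non-trapping condition to the existence of a forward-trapped, self-intersection-free geodesic, which is handled by choosing a point of $\Gamma_-$ and using completeness of the ambient flow together with Lemma~\ref{l:no_self_intersections}.
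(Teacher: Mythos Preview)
Your proof is correct and follows essentially the same approach as the paper: argue by contradiction, obtain a forward-trapped geodesic, apply Lemma~\ref{l:omega_limit} to produce a simple closed geodesic in its $\omega$-limit set, and then observe that a closed geodesic self-intersects, contradicting Lemma~\ref{l:no_self_intersections}. Your write-up is in fact slightly more careful than the paper's, since you explicitly invoke Lemma~\ref{l:no_self_intersections} beforehand to verify the no-self-intersection hypothesis needed in Lemma~\ref{l:omega_limit}.
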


\begin{proof}
Let $(M,g)$ be a compact, simply connected, Riemannian surface, and assume by contradiction that there exists a geodesic $\gamma:[0,\infty)\to M$ parametrized with constant speed. By Lemma~\ref{l:omega_limit}, its $\omega$-limit set contains a closed geodesic. However, a closed geodesic is in particular a geodesic with a self-intersection, and Lemma~\ref{l:no_self_intersections} provides a contradiction. 
\end{proof}

Given a Riemannian manifold $(M,g)$, we denote by $d:M\times M\to[0,\infty)$ its induced Riemannian distance. The following statement is due to Croke and Wen \cite[Corollary~2]{Croke:2015qy}, and we provide here an alternative proof.

\begin{prop}\label{p:unique_minimizer}
Let $(M,g)$ be a compact, simply connected, Riemannian surface with non-empty boundary and no conjugate points. For each pair of distinct points $x,y\in M$ there is at most one geodesic $\gamma:[0,1]\to M$ such that $\gamma(0)=x$ and $\gamma(1)=y$. If such a geodesic exists, it is the only curve satisfying $\length(\gamma)=d(x,y)$.
\end{prop}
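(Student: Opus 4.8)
## Proof Proposal for Proposition~\ref{p:unique_minimizer}

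The plan is to split the statement into its two assertions. The first --- that at most one geodesic joins $x$ to $y$ --- is the harder one, but it is \emph{almost} Lemma~\ref{l:no_intersections}: the only gap is that Lemma~\ref{l:no_intersections} is stated for a surface without boundary (or at least its proof freely extends geodesics to their maximal interval of definition, which in a manifold with boundary may hit $\pl M$). So the first thing I would do is reduce to the closed case by a doubling argument: glue two copies of $(M,g)$ along $\pl M$ to obtain a closed surface, or --- cleaner --- embed $(M,g)$ isometrically in a slightly larger simply connected surface. Actually the most economical route is to observe that the proofs of Lemmas~\ref{l:no_self_intersections} and~\ref{l:no_intersections} only ever use geodesic \emph{segments contained in $M$}, together with the hypothesis that $M$ has no conjugate points along such segments; the Birkhoff shortening and minmax arguments take place inside a disk $D\subset M$ bounded by the geodesics in question, hence entirely within $M$. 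So I would simply invoke Lemma~\ref{l:no_intersections} directly: if two distinct geodesics $\gamma_0,\gamma_1:[0,1]\to M$ joined $x$ to $y$, that lemma already gives a contradiction. This disposes of the first assertion with essentially one line.

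Next, suppose a geodesic $\gamma:[0,1]\to M$ with $\gamma(0)=x$, $\gamma(1)=y$ exists; I must show it is the unique curve of length $d(x,y)$, and in particular that $\length(\gamma)=d(x,y)$. The key point is that $(M,g)$ is non-trapping (Lemma~\ref{l:scs_non_trapping}), hence complete as a length space with compact closed balls, so by the Hopf--Rinow theorem for manifolds with boundary there exists at least one minimizing curve $c$ from $x$ to $y$, and any such $c$ is a concatenation of geodesic segments in $M^\circ$ and arcs in $\pl M$. I would argue that such a minimizer is in fact a genuine geodesic of $M$: a true corner or a nontrivial boundary arc could be shortcut using the absence of conjugate points --- more precisely, a standard first-variation argument shows a length-minimizer can only touch $\pl M$ where $\pl M$ is concave (curving away from $M$), and at such points one can still locally shorten. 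Alternatively, and more robustly, I would use the Birkhoff/minmax machinery already set up in the proof of Lemma~\ref{l:no_self_intersections}: take the disk cut out by $\gamma$ together with the minimizer $c$, and run the shortening argument; the no-conjugate-points hypothesis forces the two to coincide. Once one knows \emph{some} minimizer is a geodesic, the first assertion (at most one geodesic from $x$ to $y$) immediately forces that minimizer to be $\gamma$, so $\length(\gamma)=d(x,y)$, and it is the \emph{only} minimizer.

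The main obstacle I anticipate is the regularity/structure of length-minimizing curves in a manifold with \emph{non-convex} boundary: a priori a minimizer could run along $\pl M$ for a while, and one must rule this out (or show that if it does, $\gamma$ itself would have to be that curve, which is impossible since $\gamma$ is a smooth interior geodesic). The cleanest fix is probably the doubling trick: let $(\widehat M,\widehat g)$ be the (only $C^0$, but that suffices for the length functional and for Birkhoff shortening) double of $M$ across $\pl M$; it is simply connected and has no conjugate points away from the gluing locus, and a minimizer in $M$ is a minimizer in $\widehat M$ that cannot cross into the second copy transversally at an interior point, so it stays a geodesic. I would then quote Lemma~\ref{l:no_intersections} on $\widehat M$ (or the subcurve argument within it) to conclude. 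If the doubling introduces technical friction with the $C^0$ metric, the fallback is the direct minmax argument inside the disk bounded by $\gamma$ and the candidate minimizer, exactly mirroring the structure of the proof of Lemma~\ref{l:no_self_intersections}, which is self-contained and does not need any ambient smoothness beyond that of $M$ itself.
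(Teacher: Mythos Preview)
Your treatment of the first assertion is fine: Lemma~\ref{l:no_intersections} does the job.

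The second assertion is where your proposal breaks down, and you have correctly located the obstacle but your proposed fixes do not work. In a domain with concave boundary, a length-minimizer $\zeta$ from $x$ to $y$ can genuinely run along $\partial M$ for a positive amount of time; first variation does \emph{not} let you shorten at such points---it is precisely the concavity condition $g(\nabla_t\dot\zeta,\nu)\leq 0$ that makes $\zeta$ stick to the boundary (any inward push increases length). So ``at such points one can still locally shorten'' is false. The doubling trick does not help either: the double has only a $C^0$ metric across the gluing locus, and a minimizer that runs along $\partial M$ becomes a curve along the fold; it is a legitimate minimizer (indeed a $C^0$-geodesic) in the double, so you still cannot conclude it is a smooth geodesic of $M$, and Lemma~\ref{l:no_intersections} is unavailable in that regularity. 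Your minmax fallback needs a convex disk for Birkhoff shortening to preserve it, but the disk bounded by $\gamma$ and $\zeta$ has $\zeta$ curving \emph{away} from its interior along the boundary arcs, so shortening flows out of the disk.

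The paper's argument is quite different and accepts from the outset that $\zeta$ need not be a geodesic. It first embeds $(M,g)$ into the interior of a slightly larger simply connected $(M_1,g)$ still without conjugate points (Lemma~\ref{l:no_conjugate_points}), and fixes $\epsilon>0$ below the injectivity radius of $M_1$. Then it runs an inductive shortening: from $x$ shoot the $M_1$-geodesic $\gamma_1$ to the point $\zeta(\tau)$ at $g$-arclength $\epsilon$ along $\zeta$; the concavity inequality for $\zeta$ forces $\gamma_1$ to enter the disk $D$ bounded by $\gamma$ and $\zeta$, and Lemma~\ref{l:no_intersections} (applied in $M_1$) forbids $\gamma_1$ from meeting $\gamma$ again, so $\gamma_1$ must exit $D$ through $\zeta$ at a later point. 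One then replaces $(\gamma,\zeta)$ by $(\gamma_1,\zeta|_{[\cdot,\cdot]})$ and repeats. Each step trims at least $\epsilon$ of length from the minimizer, so after finitely many steps $\zeta$ has length below $\epsilon$ and hence below the injectivity radius, where the unique shortest curve is the geodesic---contradiction. The essential idea you are missing is to work in the extension $M_1$ so that you can shoot honest geodesics that are allowed to leave $M$, and to use the concavity of $\zeta$ not to shorten $\zeta$ directly but to control on which side of $\zeta$ those geodesics go.
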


\begin{proof}
By Lemma~\ref{l:scs_non_trapping}, $(M,g)$ is non-trapping. We can thus apply  Lemma~\ref{l:no_conjugate_points}, and embed $(M,g)$ in the interior of a larger compact Riemannian surface with boundary $(M_1,g)$ without conjugate points. We fix $\epsilon>0$ smaller than the injectivity radius of $(M_1,g)$ and such that $M_1$ contains a $2\epsilon$-neighborhood of $M$ in its interior. We stress that the Riemannian distance $d:M\times M\to[0,\infty)$ is not the restriction of the Riemannian distance of $(M_1,g)$.
We prove the proposition by contradiction, assuming that there exists a geodesic $\gamma$ as in the statement, and a different curve $\zeta:[0,1]\to M$ parametrized with constant speed such that $\zeta(0)=\gamma(0)=x$, $\zeta(1)=\gamma(1)=y$, and $d(x,y)=\length(\zeta)\leq\length(\gamma)$. Notice that $\length(\zeta|_{[t_0,t_1]})=d(\zeta(t_0),\zeta(t_1))$ for all $[t_0,t_1]\subset[0,1]$.

Up to replacing $\gamma$ and $\zeta$ with subcurves, we can assume that they only intersect at their endpoints, so that their concatenation $\gamma*\overline\zeta$ borders a compact disk $D\subseteq M$. We consider the compact subset $T:=\{t\in[0,1]\ |\ \zeta(t)\in\partial M\}$. Since $\zeta$ is a global minimizer of the length function on the path space $\Omega_{xy}M$, it is $H^2$, and actually twice differentiable outside a countable set $K\subset T$; moreover, 
\begin{align}
\label{e:concavity_minimizer_1}
\nabla_t\dot\zeta(t)=0, & \qquad \forall t\in [0,1]\setminus T,\\
\label{e:concavity_minimizer_2}
g(\nabla_t\dot\zeta(t),\nu(\zeta(t)))\leq0, & \qquad \forall t\in T\setminus K,
\end{align}
where $\nu$ is the inward pointing normal vector field to $\partial M$, see \cite{Alexander:1987rc}. Notice that $\length(\zeta)$ must be larger than the injectivity radius of $(M_1,g)$ (and thus larger than $\epsilon$), for otherwise the geodesic $\gamma$ would be shorter than $\zeta$. Let $\tau>0$ be such that $\length(\zeta|_{[0,\tau]})=\epsilon$. We denote by $\exp$ the exponential map of $(M_1,g)$, which is an extension of the exponential map of $(M,g)$. The geodesic \[\gamma_1(t):=\exp_x(t\tau^{-1}\exp_x^{-1}(\zeta(\tau)))\in M_1\] is well defined for $t\in[0,\tau]$; indeed, $\length(\gamma_1|_{[0,\tau]})\leq\epsilon$ and therefore $\gamma_1|_{[0,\tau]}$ cannot reach the boundary of $M_1$. Let $I_1\subset[0,\infty)$ be the maximal non-negative interval of definition of $\gamma_1$. We have two possible cases to consider:
\begin{itemize}
\item The curve $\zeta|_{[0,\tau]}$ is a geodesic of $(M,g)$, and therefore $\zeta|_{[0,\tau]}=\gamma_1|_{[0,\tau]}$. In this case, $\zeta$ cannot entirely coincide with $\gamma_1|_{[0,1]}$, for otherwise we would have two distinct geodesics $\gamma$ and $\gamma_1$ intersecting at their endpoints, which would violate Lemma~\ref{l:no_intersections}. Let $s_1:=\max\big\{t\geq\tau\ \big|\ \gamma_1|_{[0,t]}=\zeta|_{[0,t]}\big\}<1$. By the non-convexity~\eqref{e:concavity_minimizer_2}, we have that $\gamma_1(s_1+t)\in D^\circ$ for all $t>0$ small enough. 

\item The curve $\zeta|_{[0,\tau]}$ is not a geodesic. Therefore, by the non-convexity assumption~\eqref{e:concavity_minimizer_2}, the velocity vectors $\dot\zeta(\tau)$ and $\dot\gamma_1(\tau)$ are linearly independent, and $\dot\gamma_1(\tau)$ points in the interior of $D$. We set $s_1:=\tau$, and notice that $\gamma_1(s_1+t)\in D^\circ$ for all $t>0$ small enough.
\end{itemize}
Since $(M,g)$ is non-trapping, the geodesic $\gamma_1$ must eventually exit $D$ in forward time. Lemma~\ref{l:no_intersections} prevents $\gamma_1$ to intersect $\gamma$. Therefore, $\gamma_1|_{I_1\setminus[0,s_1]}$ must intersect $\zeta$, and we set
$s_1':=\min\{t>s_1\ |\ \gamma_1(t)\in\partial D\}$, $s_1'':=\zeta^{-1}(\gamma_1(s_1'))$, and $\zeta_1:=\zeta|_{[s_1,s_1'']}$. We now repeat the whole argument replacing the curves $\gamma$ and $\zeta$ with $\gamma_1:[s_1,s_1']\to M$ and $\zeta_1:[s_1,s_1'']\to M$ respectively. Notice that $\length(\gamma_1|_{[s_1,s_1']})\geq\length(\zeta_1)=d(\zeta_1(s_1),\zeta_1(s_1''))$ and $\length(\zeta_1)\leq\length(\zeta)-\epsilon$.

At every iteration of this inductive procedure, we shrink the length of the minimizing curve by at least $\epsilon$. Therefore, after at most $n:=\lfloor d(x,y)\epsilon^{-1} \rfloor$ iterations, we are left with a geodesic $\gamma_n:[s_n,s_n']\to M$ and a different curve $\zeta_n:[s_n,s_n'']\to M$ such that $\gamma_n(s_n)=\zeta_n(s_n)=:x_n$, $\gamma_n(s_n')=\zeta_n(s_n'')=:y_n$, $\length(\gamma_n)\geq\length(\zeta_n)=d(x_n,y_n)$, and $0<\length(\zeta_n)\leq\epsilon$. Lemma~\ref{l:no_intersections} implies that $\gamma_n$ is the only geodesic of $(N,g)$ joining $x_n$ and $y_n$. 
Since $\epsilon$ is smaller than the injectivity radius of $(N,g)$, we must have
$d(x_n,y_n)=\length(\gamma_n)$, and since $\zeta_n$ is not the same curve as $\gamma_n$, we must have $\length(\zeta_n)>\length(\gamma_n)$, which contradicts the fact that $\length(\zeta_n)=d(x_n,y_n)$.
\end{proof}

\subsection{Relations between boundary distance and lens data}\label{relationslensbeta}
Let $(M,g)$ be a compact Riemannian manifold with boundary, whose geodesic flow is non-trapping. As before, we can see $(M,g)$ as a compact subset of an auxiliary closed Riemannian manifold $(N,g)$, so that the geodesic flow $\varphi_t:SN\to SN$ is complete. We consider the forward hitting time function that we introduced in~\eqref{hitting}, namely the function
\begin{align}\label{htf} 
 t_g^+:\pl_- SM\cup\pl_0SM\to[0,\infty),
 \qquad
 t_g^+(y):=\inf\{ t>0\ |\ \varphi_t(y)\not\in  SM^\circ\}.
\end{align}
We define the \emph{hitting scattering map}
\begin{align}\label{hsm}
s_g:\pl_- SM\cup\pl_0SM\to\pl_+ SM\cup\pl_0SM,
\qquad
s_g(y):=\varphi_{t^+_g(y)}(y).
\end{align}
It turns out that the hitting lens data $(t_g^+,s_g)$ provides the same information as the usual exit lens data $(\tau_g^+,\sigma_g)$, in the following sense.

\begin{prop}\label{equivalencescat}
Let $M$ be a connected compact manifold with boundary, and $g_1,g_2$ two Riemannian metrics on $M$ that are non-trapping and $g_1|_{T\pl M}=g_2|_{T\pl M}$. Then $(t^+_{g_1},s_{g_1})=(t^+_{g_2},s_{g_2})$ if and only if $(\tau^+_{g_1},\sigma_{g_1})=(\tau^+_{g_2},\sigma_{g_2})$.
\end{prop}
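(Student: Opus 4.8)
The plan is to give explicit, metric-independent recipes that recover each of the pairs $(\tau^+_g,\sigma_g)$ and $(t^+_g,s_g)$ from the other; the equivalence then follows immediately. The hypothesis $g_1|_{T\pl M}=g_2|_{T\pl M}$ is used to identify, for the two metrics, the boundary sphere bundle $\pl SM$ together with the decomposition $\pl SM=\pl_-SM\cup\pl_0SM\cup\pl_+SM$ into incoming, glancing and outgoing vectors, the set $\mc I:=\pl_-SM\cup\pl_0SM$, and the flip $(x,v)\mapsto(x,-v)$; the geodesic flows $\varphi^{g_i}_t$ are then compared on this common boundary data.

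\emph{Recovering $(\tau^+_g,\sigma_g)$ from $(t^+_g,s_g)$.} Fix $y\in\mc I$. If $t^+_g(y)=0$, then $y$ is glancing and the geodesic leaves $M$ at once, so $\tau^+_g(y)=0$ and $\sigma_g(y)=y$; assume henceforth $t^+_g(y)>0$. Put $y^{(0)}:=y$ and $s_0:=0$, and, as long as $y^{(j)}\in\mc I$ with $t^+_g(y^{(j)})>0$, set $y^{(j+1)}:=s_g(y^{(j)})$ and $s_{j+1}:=s_j+t^+_g(y^{(j)})$. Then the $y^{(j)}=\varphi^g_{s_j}(y)$ are the successive intersections with $\pl M$ of the forward orbit segment of $y$ inside $M$, listed by increasing time, the orbit lying in $SM^\circ$ on each interval $(s_j,s_{j+1})$; consequently $s_j\nearrow\tau^+_g(y)$,
\[
\tau^+_g(y)=\sum_{j\ge0}t^+_g(y^{(j)}),
\]
and $\sigma_g(y)$ is the terminal point of the sequence $(y^{(j)})$ — its last element when the sequence is finite, its limit otherwise. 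Since the right-hand sides involve only $(t^+_g,s_g)$ and the common boundary data, $(t^+_{g_1},s_{g_1})=(t^+_{g_2},s_{g_2})$ forces $(\tau^+_{g_1},\sigma_{g_1})=(\tau^+_{g_2},\sigma_{g_2})$.

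\emph{Recovering $(t^+_g,s_g)$ from $(\tau^+_g,\sigma_g)$.} For $y\in\mc I$, a point $w\in\mc I$ satisfies $\sigma_g(w)=\sigma_g(y)$ precisely when $w$ lies on the forward orbit segment of $y$ inside $M$: both orbits then exit $M$ at the same point with the same velocity, hence coincide backwards from there. Among such $w$ with $\tau^+_g(w)<\tau^+_g(y)$ — namely the re-entry points $y^{(1)},y^{(2)},\dots$ of the previous paragraph, together with, possibly, a glancing exit point carrying $\tau^+_g=0$ — the value $\tau^+_g(w)$ equals $\tau^+_g(y)-s_j$ and is therefore strictly decreasing along the sequence, so it attains its maximum at the first re-entry $y^{(1)}=s_g(y)$, which strictly dominates the glancing-exit competitor. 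Hence $s_g(y)$ is the point $w\in\mc I$ with $\sigma_g(w)=\sigma_g(y)$ and $\tau^+_g(w)<\tau^+_g(y)$ at which $\tau^+_g$ is largest, and $t^+_g(y)=\tau^+_g(y)-\tau^+_g\big(s_g(y)\big)$, whenever such a $w$ exists; otherwise $s_g(y)=\sigma_g(y)$ and $t^+_g(y)=\tau^+_g(y)$ (this last case occurring exactly when the first intersection of the orbit with $\pl M$ is already the exit point). Again only $(\tau^+_g,\sigma_g)$ and the shared boundary structure enter, so the converse implication follows.

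\emph{Main obstacle.} The substantive point — and the reason the statement is not a triviality — lies in the glancing set $\pl_0SM$, already emphasized in the introduction. The forward orbit segment of $y$ in $M$ may be tangent to $\pl M$, in principle infinitely often, before it exits, and its exit may itself be glancing. One must show that the iteratively defined sequence $(y^{(j)})$ genuinely exhausts the intersections with $\pl M$, with $SM^\circ$-arcs in between, so that $s_j\nearrow\tau^+_g(y)$, $y^{(j)}\to\sigma_g(y)$ and $\sum_j t^+_g(y^{(j)})=\tau^+_g(y)$ even in the infinite case; and that in the second direction the exit point never spoils the choice of $w$ — it does not, since $\sigma_g(y)\in\pl_+SM$ is not in $\mc I$, while if $\sigma_g(y)\in\pl_0SM$ the orbit issued from it leaves $M$ instantly, so $\tau^+_g(\sigma_g(y))=0$ and it is dominated by every re-entry value. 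These facts follow by combining the upper semicontinuity of the exit time and the non-trapping bound $\tau^+_g<\infty$ from Section~2 with the elementary local picture of a geodesic tangent to a hypersurface: transversal crossings of $\pl M$ are locally isolated, so a finer local analysis is needed only near $\pl_0SM$, where a limiting argument shows that the intersections with $\pl M$ can accumulate only at a glancing exit and that the formulas above persist.
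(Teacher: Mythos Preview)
Your recovery of $(t^+_g,s_g)$ from $(\tau^+_g,\sigma_g)$ is correct and is exactly the paper's argument. The converse, however, has a genuine gap: your opening claim that $t^+_g(y)=0$ forces $\tau^+_g(y)=0$ is false, and your iteration need not reach the exit. A geodesic may lie in $\partial M$ for a nontrivial time interval --- if $y\in\partial_0SM$ and $\varphi_t(y)\in\partial SM$ for all $t\in[0,T]$, then $t^+_g(y)=0$ while $\tau^+_g(y)\geq T>0$, and your procedure halts at $y$ with the wrong output. More generally the closed set $F=\{t\in[0,\tau^+_g(y)]:\varphi_t(y)\in\partial SM\}$ can have nonempty interior, or accumulation points that are \emph{not} exit points (a boundary defining function $\rho$ with $t\mapsto\rho(\pi_0(\varphi_t(y)))$ flat at some $t_0$ already produces this), so the ``limiting argument'' you sketch cannot go through. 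Since $(t^+_g,s_g)$ is identically $(0,{\rm Id})$ along a boundary segment, no iteration of it can detect how long the orbit slides; a priori one metric could slide along $\partial M$ while the other exits at the very same glancing point, and nothing in your scheme rules this out.

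The paper's proof of this direction is therefore substantially different. It first invokes the boundary-determination theorem of Stefanov--Uhlmann (whose input is only short geodesics near $\partial M$, for which hitting and exit data coincide) to arrange, after a boundary-fixing diffeomorphism, that the $C^\infty$-jets of $g_1,g_2$ --- hence of $X_{g_1},X_{g_2}$ --- agree at $\partial M$. Given a $g_1$-orbit $\gamma_1:[0,\tau]\to SM$ one then \emph{constructs} a curve $\gamma_2$ by setting $\gamma_2:=\gamma_1$ on $F$ (where the two flows coincide since $X_{g_1}=X_{g_2}$ there) and, on each component $(a,b)$ of $[0,\tau]\setminus F$, taking the $g_2$-orbit issued from $\gamma_1(a)$; equality of the hitting data matches it to $\gamma_1(b)$. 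The infinite-order agreement of the vector fields at $\partial SM$ is then used to show that $\gamma_2$ is $C^1$ across $\partial F$, hence a genuine $g_2$-geodesic, which gives $\tau^+_{g_2}(y)=\tau^+_{g_1}(y)$ and $\sigma_{g_2}(y)=\sigma_{g_1}(y)$. This handles arbitrary $F$ --- boundary segments and accumulating tangencies included --- precisely because it does not rely on iterating $(t^+_g,s_g)$.
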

\begin{proof} 
Assume that $(\tau^+_{g_1},\sigma_{g_1})=(\tau^+_{g_2},\sigma_{g_2})=:(\tau^+,\sigma)$, and consider an arbitrary $y\in \pl_-SM\cup \pl_0SM$. If $\tau^+(y)=0$, we have $s_{g_1}(y)=s_{g_2}(y)=y$ and $t^+_{g_1}(y)=t^+_{g_2}(y)=0$. Otherwise, if $\tau^+(y)>0$, there exists a unique point $z\in \sigma^{-1}(\sigma(y))$ such that 
\[\tau^+(z)=\sup\big\{\tau^+(z')\ \big|\  z'\in \sigma^{-1}(\sigma(y)),\ \tau^+(z')<\tau^+(y)\big\},\] 
and we conclude that $s_{g_1}(y)=s_{g_2}(y)=z$ and $t^+_{g_1}(y)=t^+_{g_2}(y)=\tau^+(y)-\tau^+(z)$.

Conversely, assume that $(t^+_{g_1},s_{g_1})=(t^+_{g_2},s_{g_2})$. By Stefanov-Uhlmann \cite[Theorem 1]{Stefanov:2009lp}, up to replacing $g_2$ with $\psi^*g_2$ for a suitable diffeomorphism $\psi:M\to M$ with $\psi|_{\pl M}={\rm Id}$, we can assume that the $C^\infty$ jets of $g_1$ and $g_2$ coincide at $\partial M$ and the same is true for the geodesic vector fields $X_{g_1}$ and $X_{g_2}$ (as vector fields on $TM$). We denote by $S_{i}M$ the unit tangent bundle of $(M,g_i)$. Consider an arbitrary point $y\in\partial_0 SM\cup \partial_- SM$. Up to switching the roles of $g_1$ and $g_2$, we can assume that $\tau_{g_1}^+(y)\geq\tau_{g_2}^+(y)$. If $\tau_{g_1}^+(y)=0$, then $\tau_{g_2}^+(y)=0$ and $\sigma_{g_1}(y)=\sigma_{g_2}(y)=y$. Let us now consider the case where $\tau:=\tau_{g_1}^+(y)>0$, so that we have a forward geodesic $\gamma_1:[0,\tau]\to S_{1}M$, $\gamma_1(t):=\varphi^{g_1}_t(y)$. We define the closed subset $F:=\big\{t\in[0,\tau]\ \big|\ \gamma_1(t)\in\partial SM\big\}$, and a curve $\gamma_2:[0,\tau]\to S_{2}M$ as follows. On $F$, we set $\gamma_2|_{F}:=\gamma_1|_{F}$. Since $X_{g_1}=X_{g_2}$ on $\pl SM$, for each connected component $J$ of the interior $F^\circ$, we have $\gamma_2(t)=\varphi_{t-\inf J}^{g_2}(\gamma_2(\inf J))$ for all $t\in J$. For each connected component $I$ of $[0,\tau]\setminus F$, one has $\gamma_1|_I\subset S_{1}M^\circ$, and we set $\gamma_2(t):=\varphi_{t-\inf I}^{g_2}(\gamma_1(\inf I))$ for each $t\in I$; since $(t^+_{g_1},s_{g_1})=(t^+_{g_2},s_{g_2})$, we have that $\gamma_2(\sup I)=\gamma_1(\sup I)$.  Summing up, we constructed a continuous curve $\gamma_2:[0,\tau]\to S_{2}M$ that restricts to a smooth solution of the equation $\dot\gamma_2=X_{g_2}\circ\gamma_2$ on the dense subset $[0,\tau]\setminus \partial F$. We claim that $\gamma_2\in C^1$ and satisfies this equation on $[0,\tau]$. Indeed, since the $C^\infty$ jets of $X_{g_1}$ and $X_{g_2}$ coincide at $\partial SM$, we have that $d(\varphi_t^{g_1}(z),\varphi_t^{g_2}(z))= \mc O(t^\infty)$ uniformly in $z\in\partial SM$ as $t\to0$, where $d$ is any Riemannian distance on $TM$. This, together with $\gamma_1|_{F}=\gamma_2|_{F}$, implies that $d(\gamma_1(t),\gamma_2(t))=\mc O\big(\inf\big\{|t-t'|\ \big|\ t'\in F\big\}^\infty\big)$ uniformly in $t\in[0,\tau]$. Therefore, for each $t\in\partial F$ and $t'\in[0,\tau]$ sufficiently close to $t$, we have the following expression in local coordinates around $\gamma_1(t)=\gamma_2(t)$:
\begin{align*}
\gamma_2(t')-\gamma_2(t)=& \gamma_1(t')-\gamma_1(t)+\mc{O}(|t'-t|^\infty)
=(t'-t) X_1(\gamma_1(t))+\mc{O}((t'-t)^2).
\end{align*}
In particular, $\gamma_2$ is differentiable at $t$ with derivative $\dot\gamma_2(t)=X_1(\gamma_1(t))=X_2(\gamma_2(t))$. We conclude that $\gamma_2:[0,\tau]\to S_{2}M$ is a smooth solution of  $\dot\gamma_2=X_{g_2}\circ\gamma_2$, and therefore it is a geodesic $\gamma_2(t)=\varphi_t^{g_2}(y)$ satisfying $\dot\gamma_2(\tau)=\dot\gamma_1(\tau)=\sigma_{g_1}(y)$. This proves that $\tau_{g_2}^+(y)\geq \tau$, and therefore $\tau_{g_2}^+(y)= \tau_{g_1}^+(y)=\tau$ and  $\sigma_{g_2}(y)= \sigma_{g_1}(y)$.
\end{proof}
As for the equivalence of the boundary distance function $\beta_g$ defined by \eqref{betag} and the lens data, we are going to prove the following statement for surfaces. 

\begin{thm}\label{betadetS}
Let $M$ be a connected, simply connected, compact surface with boundary. If $g_1$ and $g_2$ are two Riemannian metrics without conjugate points such that $\beta_{g_1}=\beta_{g_2}$, then $(\tau^+_{g_1}, \sigma_{g_1})=(\tau^+_{g_2},\sigma_{g_2})$.
\end{thm}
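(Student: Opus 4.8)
The goal is to recover the lens data $(\tau^+_{g_i},\sigma_{g_i})$ from the boundary distance function $\beta_{g_i}$ on a simply connected compact surface without conjugate points. By Proposition~\ref{equivalencescat}, it suffices to recover the \emph{hitting} data $(t^+_{g_i},s_{g_i})$. The key structural input is Proposition~\ref{p:unique_minimizer}: on such a surface, any two distinct points are joined by at most one geodesic, and when a minimizing geodesic between $x$ and $y$ exists it is the unique length-minimizing curve realizing $d(x,y)$. The first step will be to isolate, in purely metric terms readable off $\beta_g$, those boundary points $x,y\in\partial M$ that are joined by a geodesic segment lying in $M^\circ$ (except at endpoints): these are exactly the pairs for which $d_g(x,y)$ is realized by a curve meeting $\partial M$ only at its endpoints, which in turn can be detected from $\beta_g$ by comparing $d_g(x,y)$ with the intrinsic boundary distance $d_{g|_{\partial M}}$ and with nearby values of $\beta_g$ along $\partial M$ (a minimizing curve that touches $\partial M$ at an interior point can be detected via the triangle inequality saturating). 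For such "visible" pairs the segment \emph{is} the minimizer, so both its length $t^+_g(y)$ and, after a first-variation/infinitesimal argument, its exit direction are determined.

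The heart of the argument is then an infinitesimal / Santaló-type computation. For a geodesic $\gamma$ from $x$ to $y$ realizing $d_g(x,y)$, the first variation formula gives $d(d_g)(x,y)(\dot x,\dot y)=-g(\dot\gamma(0),\dot x)+g(\dot\gamma(\ell),\dot y)$ for boundary variations; since $g_1|_{T\partial M}=g_2|_{T\partial M}$ (which itself follows from $\beta_{g_1}=\beta_{g_2}$ by restricting to boundary-close points), knowing $\beta_g$ near $(x,y)$ determines the tangential components of $\dot\gamma(0)$ and $\dot\gamma(\ell)$, and the sign of the normal component is fixed (inward at $x$, outward at $y$); the normal component's magnitude is then determined since $\dot\gamma$ is a unit vector. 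Thus on the "visible" part of $\partial_-SM$, $s_g$ and $t^+_g$ are recovered from $\beta_g$. The remaining, and I expect \textbf{main}, obstacle is to handle the \emph{non-visible} geodesics — those $\gamma_{x,v}$ whose segment in $M$ before exiting is not globally minimizing (Croke–Wen's example shows these exist even without conjugate points) — and, relatedly, the \emph{glancing} directions where $t^+_g$ is discontinuous.

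To overcome this, I would argue by a concatenation/propagation scheme, in the spirit of the arguments in Proposition~\ref{surjI_1*}'s gluing and of the Croke–Wen analysis. Given any $(x,v)\in\partial_-SM\cup\partial_0SM$, follow the geodesic $\gamma_{x,v}$ inside $M$ until it hits $\partial M$ at the first hitting point $s_g(x,v)$; I want to show this first arc, or a suitable subdivision of it into short subarcs, is detectable from $\beta_g$. Using Lemma~\ref{l:no_self_intersections} and Lemma~\ref{l:no_intersections} (no self-intersections, at most one geodesic between two points) together with Lemma~\ref{l:omega_limit} (Poincaré–Bendixson) to control the global behaviour, one shows that every geodesic arc in $M$ can be cut into finitely many subarcs each of which \emph{is} minimizing between its endpoints (this is where non-trapping, from Lemma~\ref{l:scs_non_trapping}, and the absence of conjugate points enter decisively: locally geodesics minimize, and self-intersection-freeness forbids the pathology of a geodesic returning near itself). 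Each such minimizing subarc is visible, hence its length and endpoint directions are read off $\beta_g$ as above; chaining them recovers the full arc from $x$ to its first boundary hit, giving $t^+_g(x,v)$ and $s_g(x,v)$. For the glancing set $\partial_0SM$, one takes limits of nearby non-glancing directions, using upper semicontinuity of $\tau^+_g$ and the continuity properties of the minimizing geodesic established in Proposition~\ref{p:unique_minimizer}; the delicate point, which will require care, is that the chain decomposition into minimizing subarcs must be chosen uniformly enough near glancing that passing to the limit is legitimate, and that no "length" is lost to a subarc that degenerates in the limit. Once $(t^+_{g_1},s_{g_1})=(t^+_{g_2},s_{g_2})$ is established on all of $\partial_-SM\cup\partial_0SM$, Proposition~\ref{equivalencescat} yields $(\tau^+_{g_1},\sigma_{g_1})=(\tau^+_{g_2},\sigma_{g_2})$, completing the proof.
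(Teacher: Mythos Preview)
Your plan contains the right ingredients for the transverse case, but the part you flag as the ``main obstacle'' rests on a misreading of Proposition~\ref{p:unique_minimizer}, and the concatenation scheme you propose for it cannot work.

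Proposition~\ref{p:unique_minimizer} says more than ``a minimizing geodesic is unique'': it says that on a simply connected compact surface without conjugate points, \emph{whenever} a geodesic joins $x$ to $y$, that geodesic \emph{is} the unique length minimizer. Hence there are no ``non-visible'' geodesic arcs in this setting; every first-hit segment $\gamma|_{[0,t^+_g(y)]}$ already realizes $\beta_g(x,x')$. The Croke--Wen example you invoke is three-dimensional (the paper mentions this explicitly in the introduction as the obstruction to extending Theorem~\ref{betadetS} beyond surfaces), so it does not apply here. Moreover, your proposed remedy of cutting a non-minimizing arc into minimizing subarcs is unworkable regardless: the subdivision points would lie in $M^\circ$, so their mutual distances are not values of $\beta_g$, and nothing about those subarcs can be ``read off $\beta_g$''.

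With the non-visible case removed, what remains is essentially the paper's argument, but your treatment of the glancing directions is too sketchy. Once you know that for $(x,v)\in\partial_-SM\cap s_g^{-1}(\partial_+SM)$ the strict triangle inequality holds at $(x,x')$ (your ``saturation'' observation), that $t^+_g(y)=\beta(x,x')$, and that the first variation of $\beta$ determines $v$ and $v'$, the problem is to pass to general $y$ with $t^+_g(y)>0$. ``Take limits using upper semicontinuity'' is not enough: $t^+_g$ and $s_g$ are discontinuous at glancing, so an arbitrary sequence $y_n\to y$ need not have $t^+_g(y_n)\to t^+_g(y)$. The paper proves a dedicated approximation lemma (Lemma~\ref{l:approximating_first_hit}): for any $y$ with $t^+_g(y)>0$ one can construct, via a one-parameter family of geodesics in a slightly larger surface and Sard's theorem applied to their boundary intersections, a sequence $y_n\in\partial_-SM\cap s_g^{-1}(\partial_+SM)$ with $y_n\to y$, $t^+_g(y_n)\to t^+_g(y)$ and $s_g(y_n)\to s_g(y)$. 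This, together with the strict triangle inequality (which survives in the limit and forces the $g_2$-minimizer to be an interior geodesic), handles all $y$ with $t^+_g(y)>0$; the case $t^+_g(y)=0$ follows by symmetry, and Proposition~\ref{equivalencescat} finishes.
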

\begin{proof}
First, we note that $\beta_{g_1}=\beta_{g_2}$ implies that $g_1|_{T\pl M}=g_2|_{T\pl M}$:
indeed, for each $x,x'\in \pl M$ close one to each other and $\alpha:[0,1]\to \pl M$ the shortest curve between $x$ and $x'$ in $\pl M$, one has 
\begin{equation}\label{inegalpha}
\ell_{g_j}(\alpha)\geq \sup\left\{\left. \sum_{i=1}^{N-1}\beta_{g_j}(\alpha(t_i),\alpha(t_{i+1}))\, \right|\, 
N\in \nn, 0=t_1<t_2<\dots<t_N=1\right\}
\end{equation}
but, if $(M,\til{g}_j)$ is an extension of $(M,g_j)$, the right hand side is greater than 
\[\sup\left\{\left. \sum_{i=1}^{N-1}d_{\til{g}_j}(\alpha(t_i),\alpha(t_{i+1}))\, \right |\, 
N\in \nn, 0=t_1<t_2<\dots<t_N=1\right\}=\ell_{\til{g}_i}(\alpha)=\ell_{g_i}(\alpha).\]
thus \eqref{inegalpha} is an equality. Once we know the length of curves on $\pl M$, we know $g_i|_{T\pl M}$.
We break the proof  Theorem~\ref{betadetS} into several lemmas.
\begin{lem}
\label{l:approximating_first_hit}
Let $(M,g)$ be a simply connected, compact Riemannian surface with boundary and without conjugate points. For each $y\in \partial_0SM\cup\partial_-SM$ satisfying $t^+_g(y)>0$ there exists a sequence $\{y_n\,|\,n\in\N\} \subset \partial_- SM\cap s_g^{-1}(\partial_+ SM)$ such that $y_n\to y$,  $t^+_g(y_n) \to t^+_g(y)$, and $s_g(y_n) \to s_g(y)$.
\end{lem}

\begin{proof}
By Lemma~\ref{l:no_conjugate_points}, we consider $(M,g)$ to be contained in the interior of a simply connected compact surface with boundary $(M_1,g)$ without conjugate points. Let $y=(x,v)$ be a point as in the statement, and set $y'=(x',v'):=s_g(x,v)$ and $T:=t^+_g(y)>0$. The cases where $y\in\partial_-SM$ or $y'\in\partial_+SM$ are easy, and therefore we focus on the remaining case in which $y,y'\in\partial_0SM$. For a sufficiently small $\Theta>0$, we have a well defined geodesic $\zeta:(-\Theta,0]\to M_1$, $\zeta(\theta):=\exp_{x'}(\theta\,\nu_{x'})$, and a  smooth map
\begin{align*}
\psi:[0,T]\times[-\Theta,0]\to M_1,
\qquad
\psi(t,\theta)=\exp_{x}\big( t \,T^{-1} \exp_{x}^{-1}(\zeta(\theta)) \big),
\end{align*}
\begin{figure}
\begin{center}
\begin{footnotesize}
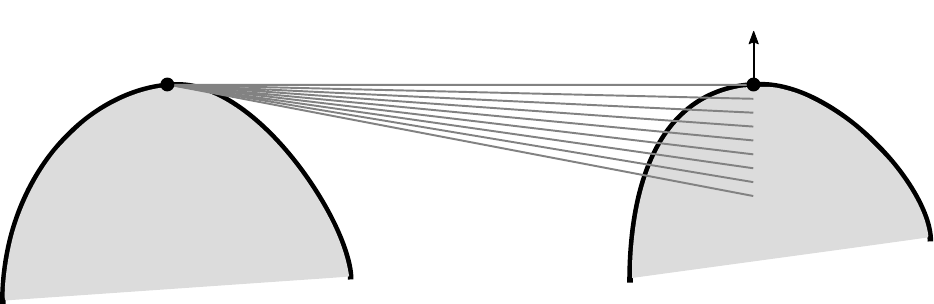 
\end{footnotesize}
\caption{The region covered by the map $\psi$, foliated by geodesics $t\mapsto\psi(t,\theta)$.}
\label{f:coordinates}
\end{center}
\end{figure}%
see Figure~\ref{f:coordinates}. Since there are no conjugate points in the extended manifold $(M_1,g)$, the map $\psi$ restricts to a diffeomorphism onto its image on  $(0,T]\times(-\Theta,0]$. Since $\psi(t,0)=\exp_{x}(tv)\in M^\circ$ for all $t\in(0,T)$, for each $\epsilon\in(0,T/2)$ there exists $\delta_\epsilon\in(0,\Theta)$ such that 
\begin{align*}
\psi(t,\theta)\in M^\circ,
\qquad
\forall t\in[\epsilon,T-\epsilon],\ \theta\in (-\delta_\epsilon,0].
\end{align*}
Consider the open subset 
\[
J_\epsilon:=\big\{t\in(0,T)\ \big|\ \psi(t,\theta)\in\partial M \mbox{ for some }\theta\in(-\delta_\epsilon,0)\big\}
\subset
(0,\epsilon)\cup(T-\epsilon,T).
\]
Since $y,y'\in\partial_0 SM$, we can apply the implicit function theorem: for $\epsilon_0>0$ sufficiently small, there exists a smooth map $\theta_0:J_{\epsilon_0}\to(-\delta_{\epsilon_0},0)$ such that $\psi(t,\theta)\in\partial M$ for some $(t,\theta)\in J_{\epsilon_0}\times(-\delta_{\epsilon_0},0)$ if and only if $\theta=\theta_0(t)$. Let $R\subset (-\delta_{\epsilon_0},0)$ be the set of regular values of $\theta_0$. Such an $R$ has full measure in $(-\delta_{\epsilon_0},0)$  according to Sard's Theorem. In particular, we can find a sequence $\{\theta_n\,|\,n\in\N\}\subset R$ such that $\theta_n\to0$. We set 
\begin{align*}
a_n :=\max\big\{t\in[0,\epsilon_0]\ \big|\ \psi(t,\theta_n)\in\partial M\big\},\quad 
b_n :=\min\big\{t\in[T-\epsilon_0,T]\ \big|\ \psi(t,\theta_n)\in\partial M\big\},
\end{align*}
and consider the geodesic $\gamma_n:[a_n,b_n]\to M$ given by $\gamma_n(t):=\psi(t,\theta_n)$. Notice that $\gamma_n(t)\in M^\circ$ for all $t\in(a_n,b_n)$. If $a_n=0$ (as is the case when $\nu_x$ and $\nu_{x'}$ point to different sides of the geodesic joining $x$ and $x'$), the tangent vectors  $\dot\gamma_n(a_n)$ and $v$ are linearly independent, and in particular $\dot\gamma_n(a_n)$ is transverse to $\partial M$; if $a_n>0$, since $\theta_n\in R$, we still infer that $\dot\gamma_n(a_n)$ is transverse to $\partial M$. Analogously, $\dot\gamma_n(b_n)$ is transverse to $\partial M$. Finally, since $\theta_n\to0$, we have $a_n\to 0$ and $y_n:=(\gamma_n(a_n),\dot\gamma_n(a_n)/\|\dot\gamma_n(a_n)\|_g)\to y$. Such $y_n$'s have the desired properties.
\end{proof}

\begin{lem}
\label{l:same_transverse_lens}
Under the assumptions of Theorem~\ref{betadetS}, we have $s_{g_1}(y)=s_{g_2}(y)$ and $t^+_{g_1}(y)=t^+_{g_2}(y)$ for each $y\in\partial_-SM\cap s_{g_1}^{-1}(\partial_+SM)$.
\end{lem}

\begin{proof}
We will simply write $\beta:=\beta_{g_1}=\beta_{g_2}$. Consider an arbitrary point $y=(x,v)\in\partial_-SM\cap s_{g_1}^{-1}(\partial_+SM)$, and set $y'=(x',v'):=s_{g_1}(x,v)$. Notice that the geodesic $\gamma_1:[0,t^+_{g_1}(y)]\to M$, $\gamma_1(t):=\exp_x^{g_1}(tv)$ intersects the boundary $\partial M$ transversely at $t=0$ and $t=t^+_{g_1}(y)$, while $\gamma_1(t)\in M^\circ$ for all $t\in(0,t^+_{g_1}(y))$. This implies that
\begin{align}\label{e:triangular_inequality}
\beta(x,x')<\beta(x,x'')+\beta(x'',x'),
\qquad
\forall x''\in\partial M\setminus\{x,x'\}.
\end{align}
By Proposition~\ref{p:unique_minimizer}, $\gamma_1$ is the only length minimizing curve joining $x$ and $x'$, and in particular $t_{g_1}^+(y)=\beta(x,x')$.
A standard computation implies that $\beta$ is smooth near $(x,x')$, and
\begin{align*}
v = -\nabla_x\beta(x,x') + \sqrt{1-\|\nabla_x\beta(x,x')\|^2_{g_1}}\nu_x,\quad 
v' = \nabla_{x'}\beta(x,x') - \sqrt{1-\|\nabla_{x'}\beta(x,x')\|_{g_1}^2}\nu_{x'},
\end{align*}
where $\nabla_x$ and $\nabla_{x'}$ denote the Riemannian gradients with respect to $x$ and $x'$ respectively (recall that $g_1=g_2$ on $\pl M$). Let $\gamma_2:[0,\beta(x,x')]\to M$ be a $C^1$-curve such that $\|\dot\gamma_2\|_{g_2}\equiv1$, $\gamma_2(0)=x$, $\gamma_2(\beta(x,x'))=x'$ (namely, $\gamma_2$ is a length minimizing curve for $g_2$ joining $x$ and $x'$). The strict triangular inequality~\eqref{e:triangular_inequality} implies that $\gamma_2(t)\in M^\circ$ for all $t\in(0,\beta(x,x'))$, and therefore it is a geodesic for $g_2$. We set $z:=(\gamma_2(0),\dot\gamma_2(0))$. A priori, $\gamma_2$ may intersect the boundary $\partial M$ tangentially at the endpoints, but nevertheless Lemma~\ref{l:approximating_first_hit} implies that there exists a sequence $\{z_n=(x_n,v_n)\} \subset \partial_- SM\cap s_{g_2}^{-1}(\partial_+ SM)$ such that $z_n\to z$,  $t^+_{g_2}(z_n) \to t^+_{g_2}(z)$, and $(x_n',v_n'):=s_{g_2}(z_n) \to s_{g_2}(z)$. As above, we have
\begin{align*}
v_n & = -\nabla_{x_n}\beta(x_n,x'_n) + \sqrt{1-\|\nabla_{x_n}\beta(x_n,x'_n)\|^2_{g_1}}\nu_{x_n},\\
v'_n & = \nabla_{x'_n}\beta(x_n,x'_n) - \sqrt{1-\|\nabla_{x'_n}\beta(x_n,x'_n)\|_{g_1}^2}\nu_{x'_n},
\end{align*}
and therefore, since $\beta$ is smooth in a neighborhood of $(x,x')$,
\begin{align*}
\dot\gamma_2(0)=\lim_{n\to\infty} v_n = v,
\quad \dot\gamma_2(\beta(x,x'))=\lim_{n\to\infty} v_n' = v'.
\end{align*}
This shows that $s_{g_1}(y)=s_{g_2}(y)$ and  $t^+_{g_1}(y)=t^+_{g_2}(y)=\beta(x,x')$.
\end{proof}

\begin{lem}\label{l:positive_t}
Under the assumptions of Proposition~\ref{betadetS}, we have $s_{g_1}(y)=s_{g_2}(y)$ and $t_{g_1}^+(y)=t_{g_2}^+(y)$ for each $y\in\partial SM$ such that $t_{g_1}^+(y)>0$.
\end{lem}

\begin{proof}
Consider an arbitrary point $y=(x,v)\in\partial SM$ such that $t^+_{g_1}(y)>0$. We set $y'=(x',v'):=s_{g_1}(y)$, and notice that we have the strict triangular inequality
\begin{align}\label{e:strict_triangular_inequality_again}
\beta(x,x')<\beta(x,x'')+\beta(x'',x'),\qquad
\forall x''\in M\setminus\{x,x'\},
\end{align}
where $\beta=\beta_{g_1}=\beta_{g_2}$.
The points $y$ and $y'$ may belong to the tangential boundary $\partial_0SM$, but nevertheless Lemma~\ref{l:approximating_first_hit} implies that there exists a sequence $\{y_n\,|\, n\in\N\} \subset \partial_- SM\cap s_{g_1}^{-1}(\partial_+ SM)$ such that $y_n\to y$,  $t^+_{g_1}(y_n) \to t^+_{g_1}(y)$, and $s_{g_1}(y_n) \to s_{g_1}(y)$. Lemma~\ref{l:same_transverse_lens} implies that $s_{g_1}(y_n)=s_{g_2}(y_n)$ and $t^+_{g_1}(y_n)=t^+_{g_2}(y_n)$. Therefore, there is a limit $g_2$-geodesic $\gamma:[0,t^+_{g_1}(y)]\to M$, $\gamma(t)=\exp_x^{g_2}(tv)$ such that  $\gamma(0)=x$ and $\gamma(t^+_{g_1}(y))=x'$. Notice that the $g_2$-length of $\gamma$ is precisely $\beta(x,x')$. Therefore, the inequality~\eqref{e:strict_triangular_inequality_again} implies that $\gamma(t)\in M^\circ$ for all $t\in(0,t^+_{g_1}(y))$. This shows that $s_{g_1}(y)=s_{g_2}(y)$ and $t^+_{g_1}(y)=t^+_{g_2}(y)=\beta(x,x')$.
\end{proof}

We can now finish the proof of Theorem~\ref{betadetS}.
Let $y\in\partial_0SM\cup\partial_-SM$. If $t^+_{g_1}(y)>0$, Lemma~\ref{l:positive_t} implies that $s_{g_1}(y)=s_{g_2}(y)$ and $t^+_{g_1}(y)=t^+_{g_2}(y)$. If instead $t^+_{g_1}(y)=0$, we claim that $t^+_{g_2}(y)=0$ as well (and thus $s_{g_1}(y)=s_{g_2}(y)=y$ and $t^+_{g_1}(y)=t^+_{g_2}(y)=0$); indeed, if $t^+_{g_2}(y)>0$, we could apply Lemma~\ref{l:positive_t} switching the roles of $g_1$ and $g_2$, and we would obtain the contradiction $t^+_{g_1}(y)=t^+_{g_2}(y)>0$. It suffices to use Proposition \ref{equivalencescat} to conclude.
\end{proof}

\subsection{Determination of the conformal factor: proof of Theorems~\ref{th1} and~\ref{th2bis}}\label{ss:final}

The following is a well known consequence of Santalo's formula.

\begin{prop}\label{p:same_volume}
Let $(M_i,g_i)$, $i=1,2$, be compact Riemannian manifolds with boundary such that  $(\pl M_1,g_1|_{T\pl M_1})=(\pl M_2,g_2|_{T\pl M_2})$. Assume that 
the trapped set $\mc{K}_i$ has Liouville measure $0$ and that $\tau_{g_1}^+=\tau_{g_2}^+$, where $\tau_{g_i}^+$ is the forward exit time function of $(M,g_i)$. Then $\Vol(M,g_1)=\Vol(M,g_2)$.
\end{prop}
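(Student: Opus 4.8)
The plan is to apply Santal\'o's formula (Lemma~\ref{santalofor}) to the constant function $f\equiv 1$ on each unit tangent bundle $SM_i$, equipped with its Liouville measure $\mu_i$. Since the trapped set $\mc{K}_i$ has Liouville measure zero, so do the sets $\Gamma_\pm^{(i)}$ (by the argument around~\eqref{muGamma}) and the glancing flowouts $\mc{G}_\pm^{(i)}$, so the X-ray transform $I^{(i)}$ is defined $\mu_i$-almost everywhere on $\partial SM_i$ and Santal\'o's formula reads
\begin{equation*}
\mu_i(SM_i) = \int_{\partial_- SM_i} I^{(i)}(1)\, d\mu_\nu^{(i)} = \int_{\partial_- SM_i} \tau_{g_i}^+(y)\, d\mu_\nu^{(i)}(y),
\end{equation*}
where $\mu_\nu^{(i)}$ is the measure~\eqref{e:measure_at_boundary} on $\partial SM_i$ and we used that $I^{(i)}(1)(y) = \tau_{g_i}^+(y)$ by the very definition~\eqref{defXray} of the X-ray transform.

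The key observation is that the right-hand side depends only on data that is common to $g_1$ and $g_2$. Indeed, the hypothesis $g_1|_{T\partial M_1} = g_2|_{T\partial M_2}$ (together with the identification of the boundaries) means that the boundary $\partial M$, its induced metric, and the inward unit normal direction are intrinsic; hence $\partial_- SM_1$ and $\partial_- SM_2$ are naturally identified, and the measures $\mu_H^{(i)}$ on $\partial SM_i = S(\partial M)$ induced by the Sasaki metric, as well as the weights $|g_i(\nu, v)|$ appearing in~\eqref{e:measure_at_boundary}, coincide: $d\mu_\nu^{(1)} = d\mu_\nu^{(2)}$. (Concretely, $\partial SM_i$ fibers over $\partial M$ with fiber the unit sphere $S_x M$, and both the fiber metric and the splitting of $S_x M$ into incoming/outgoing hemispheres are determined by $g_i|_{T_x M}$, which is the same for both metrics since $\nu$ is determined by $g_i|_{T\partial M}$ via the second fundamental form— actually only $g_i|_{T_xM}$ at boundary points is needed, which is part of the agreement hypothesis.) Combined with $\tau_{g_1}^+ = \tau_{g_2}^+$, this gives
\begin{equation*}
\mu_1(SM_1) = \int_{\partial_- SM} \tau_{g_1}^+\, d\mu_\nu = \int_{\partial_- SM} \tau_{g_2}^+\, d\mu_\nu = \mu_2(SM_2).
\end{equation*}

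Finally I would convert the statement about Liouville measures into one about Riemannian volumes. By~\eqref{e:contact_volume_form}, the Liouville measure $\mu_i$ on $SM_i$ is the Riemannian volume of the Sasaki metric $G_i$, and Fubini along the sphere-bundle fibration $\pi_0: SM_i \to M_i$ gives $\mu_i(SM_i) = \Vol(\mathbb{S}^{n-1})\cdot \Vol(M_i, g_i)$, where $n = \dim M$ and $\Vol(\mathbb{S}^{n-1})$ is the volume of the round unit sphere (the fiber $S_x M_i$ with its induced metric is isometric to the round sphere regardless of $x$ or $i$). Dividing by the universal constant $\Vol(\mathbb{S}^{n-1})$ yields $\Vol(M, g_1) = \Vol(M, g_2)$.

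The only mild subtlety— not really an obstacle— is justifying the identification $d\mu_\nu^{(1)} = d\mu_\nu^{(2)}$ carefully: one must check that the measure $\mu_\nu$ on $\partial SM$ entering Santal\'o's formula is built purely from $\partial M$, its induced metric, and the restriction $g_i|_{T_xM}$ at boundary points, all of which are shared. Since the hypothesis $(\partial M_1, g_1|_{T\partial M_1}) = (\partial M_2, g_2|_{T\partial M_2})$ as stated only controls the tangential part, one should note that $\beta_g$ or the agreement of metrics at the boundary also pins down the full metric $g_i|_{T_xM}$ for $x\in\partial M$ (the normal direction and its length are forced once one knows the geodesic flow hits the boundary, or one simply includes this in the standing identification); this is the same boundary identification used throughout Section~\ref{relationslensbeta}. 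With that in hand the argument is complete.
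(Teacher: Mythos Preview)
Your proof is correct and follows essentially the same approach as the paper's: apply Santal\'o's formula with $f\equiv 1$, observe that the boundary measure $\mu_\nu$ depends only on the shared boundary data, and divide by $\Vol(\mathbb{S}^{n-1})$. The paper's proof is a two-line version of yours, and your closing caveat about the boundary identification (that one really needs the full $g_i|_{T_xM}$ at boundary points, implicit in making sense of $\tau_{g_1}^+=\tau_{g_2}^+$) is a fair observation about a slight imprecision in the statement itself rather than a gap in the argument.
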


\begin{proof}
It suffices to observe that the measure $\mu_\nu$ on $\partial SM_i$ is determined by $g_i$ at $\pl M_i$ and then use Santalo formula (Lemma~\ref{santalofor})
\begin{align*}
 \Vol(M_i,g_i)=
 \frac{1}{\Vol(S^{n-1})}
 \int_{\partial_{\mathrm{-}}SM_i} \tau_{g_i}^+(x,v)\,d\mu_\nu
\end{align*}
to see that the volume is determined by $\tau_{g_i}^+$.
\end{proof}

By combining the results in this section with Theorem~\ref{th2} and an argument due to Croke~\cite[Theorem~C]{Croke:1991wc}, we can finally provide a proof of Theorem~\ref{th1}.

\begin{proof}[Proof of Theorem~\ref{th1}]
As in the proof of Proposition~\ref{betadetS}, we have $g_1|_{T\pl M}=g_2|_{T\pl M}$ and by Stefanov-Uhlmann \cite[Theorem 1]{Stefanov:2009lp}, up to pulling back $g_2$ by a diffeomorphism, the $C^\infty$-jet of $g_1$ and $g_2$ agree at $\pl M$. Next, by
Theorem~\ref{th2}, we can reduce to the case where $g_1$ and $g_2$ are conformal, i.e $g_1=e^{2\rho}g_2$ for some $\rho\in C^\infty(M)$ such that $\rho|_{\pl M}\equiv 0$, and  then $d\mu_{g_1}=e^{2\rho}d\mu_{g_2}$.
We denote by $\mu_{i}$ the Liouville measure on the unit tangent bundle $S_{i}M$ induced by the Sasaki metric $G_i$ of $g_i$. As in Proposition~\ref{p:same_volume}, $\mu_\nu$ on $\pl_-S_{1}M=\pl_- S_{2}M$ agrees for $g_1$ and $g_2$. By Theorem~\ref{th2}, we can reduce to the case where $g_1$ and $g_2$ are conformal, i.e $g_1=e^{2\rho}g_2$ for some $\rho\in C^\infty(M)$ such that $\rho|_{\pl M}\equiv 0$, and  then $d\mu_{g_1}=e^{2\rho}d\mu_{g_2}$.
By Proposition~\ref{p:same_volume}, $\Vol(M,g_1)=\Vol(M,g_2)$, and the H\"older inequality
\begin{align*}
\| e^\rho \|_{L^1(M,g_2)}
\leq
\| e^\rho \|_{L^2(M,g_2)}\| 1 \|_{L^2(M,g_2)}
=
\Vol(M,g_1)^{1/2}\Vol(M,g_2)^{1/2}=\Vol(M,g_2).
\end{align*}
is strict unless $\rho=0$. 
By Lemma~\ref{l:scs_non_trapping}, both $(M,g_1)$ and $(M,g_2)$ are non-trapping. Let $\varphi^{g_i}_t$ denotes the geodesic flow for $g_i$ on $S_{i}M$. By Proposition~\ref{p:unique_minimizer}, each geodesic of $(M,g_i)$ joining given points is the unique curve of minimal $g_i$-length. In particular 
\begin{align*}
\int_0^{\tau_{g_2}^+(x,v)} \|\varphi_t^{g_2}(x,v)\|_{g_1}d t
\geq
\tau_{g_1}^+(x,v),
\qquad 
\forall (x,v)\in \partial_{-}S_2M.
\end{align*}
This, together with Santalo's formula (Lemma~\ref{santalofor}), implies
\begin{align*}
\| e^\rho \|_{L^1(M,g_2)}
=&
\frac{1}{2\pi}
\int_{S_{2}M}
\|v\|_{g_1} d\mu_{G_2}(x,v)
 =
\frac{1}{2\pi}
\int_{\partial_{-}S_{2}M}
\int_0^{\tau_{g_2}^+(x,v)}\|\varphi_t^{g_2}(x,v)\|_{g_1}d t\, d\mu_\nu(x,v)\\
&\geq
\frac{1}{2\pi}
\int_{\partial_-S_{2}M}
\tau_{g_1}^+(x,v)\, d\mu_\nu(x,v)
=
\frac{1}{2\pi}
\int_{\partial_-S_{2}M}
\tau_{g_2}^+(x,v)\, d\mu_\nu(x,v)\\
&\geq \Vol(M,g_2).
\end{align*}
Therefore, the above H\"older inequality is satisfied as an equality, and $\rho\equiv0$.
\end{proof}

We now sketch the argument of Zhou~\cite[Section~4]{Zhou:2011wq} for deriving Theorem~\ref{th2bis} from Theorem~\ref{th2}.

\begin{proof}[Proof of Theorem~\ref{th2bis} by X. Zhou]
Since $(M_1,g_1)$ is non-trapping, the quantity $\tau:=\max \tau_{g_1}^+$ is finite. There exists a finite covering space $\pi:M_1'\to M_1$ such that the shortest non-contractible piecewise smooth loop in $M_1'$ has length with respect to $g_1':=\pi^*g_1$ larger than~$2\tau$, see \cite[Proposition 4.2.2]{Zhou:2011wq}. We are going to prove  Theorem~\ref{th2bis} with $(M_1,g_1)$ replaced by $(M_1',g_1')$. By a result of Croke \cite[Theorem~1.2]{Croke:2005bh}, this will imply Theorem~\ref{th2bis} for $(M_1,g_1)$ as well.

Let $(M_2',g_2')$ be a connected, non-trapping, oriented compact Riemannian surface with boundary, without conjugate points, and with the same lens data as $(M_1',g_1')$. By means of Theorem~\ref{th2}, we can assume without loss of generality that $M_1'=M_2'=:M$ and $g_2'=e^{2\rho}g_1'$ for some $\rho\in C^\infty(M)$ with $\rho|_{\partial M}\equiv0$. For each $(x,v)\in\partial_-SM$, let $\gamma_{i,x,v}:[0,\tau_{g_i'}^+(x,v)]\to M$ be the $g_i'$-geodesic such that $\gamma_{i,x,v}(0)=x$ and $\dot\gamma_{i,x,v}(0)=v$. Notice that \[\tau':=\tau_{g_2'}^+(x,v)=\tau_{g_1'}^+(x,v)\leq\tau.\] We claim that 
\begin{align}
\label{e:comparing_lengths}
\int_0^{\tau'} \|\dot\gamma_{2,x,v}(t)\|_{g_1'} dt
\geq
\tau'.
\end{align}
Indeed, $\gamma_{1,x,v}(\tau')=\gamma_{2,x,v}(\tau')$. If $\gamma_{1,x,v}$ and $\gamma_{2,x,v}$ are not homotopic with fixed endpoints, they can be joined to form a non-contractible loop $\gamma_{1,x,v}*\overline{\gamma_{2,x,v}}$ in $M$, which thus has $g_1'$-length larger than $2\tau$; since $\gamma_{1,x,v}$ has $g_1'$-length smaller than $\tau$, this implies 
\begin{align*}
\int_0^{\tau'} \|\dot\gamma_{2,x,v}(t)\|_{g_1'} dt
\geq
2\tau
-
\int_0^{\tau'} \|\dot\gamma_{1,x,v}(t)\|_{g_1'} dt
\geq
\tau
\geq
\tau'.
\end{align*}
On the other hand, if $\gamma_{1,x,v}$ and $\gamma_{2,x,v}$ are homotopic paths, since $(M,g_1')$ is without conjugate points we can apply Proposition~\ref{p:unique_minimizer}, which implies that $\gamma_{1,x,v}$ has minimal $g_1'$-length among the paths in his homotopy class, and once again~\eqref{e:comparing_lengths} follows.
Now that we have established~\eqref{e:comparing_lengths} for all $(x,v)\in\partial_-SM$, we can carry over Croke's argument as in the proof of Theorem~\ref{th1} and conclude that $\rho\equiv0$.
\end{proof}

In a compact Riemannian manifold $(M,g)$, a point $x\in\partial M$ is called a \emph{switch point} for $g$ when the curvature of $\partial M$ with respect of $g$ vanishes at $x$, but does not vanish identically on a neighborhood of it. In the setting of Theorem~\ref{th2bis}, if we further assume that $\partial M$ has finitely many switch points for $g_1$, a recent result of Croke and Wen \cite{Croke:2015qy} implies that the hitting times $t_{g_1}^+$ and $t_{g_2}^+$ coincide provided the scattering maps $s_{g_1}$ and $s_{g_2}$ coincide. By combining this result with our Theorem~\ref{th2bis} and Propositions~\ref{equivalencescat} and~\ref{betadetS}, we obtain the following scattering rigidity result.

\begin{cor}\label{c:croke}
Let $(M_1,g_1)$ and $(M_2,g_2)$ be two non-trapping, oriented compact Riemannian surfaces with boundary, without conjugate points, with the same hitting scattering maps, and such that $\partial M_1$ has finitely many switch points for $g_1$. Then there is a diffeomorphism $\psi:M_1\to M_2$  such that $\psi^*g_2=g_1$.
\hfill\qed
\end{cor}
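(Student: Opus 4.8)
The plan is to assemble the statement from the recent theorem of Croke and Wen \cite{Croke:2015qy} together with the results already proved in this section; the proof is short because all the substantial work has been done elsewhere. First I would unwind the hypothesis ``same hitting scattering maps'': exactly as in the discussion preceding Theorem~\ref{th2}, this means there is a diffeomorphism between collar neighbourhoods of $\pl M_1$ and $\pl M_2$ extending an isometry $(\pl M_1,g_1|_{T\pl M_1})\cong(\pl M_2,g_2|_{T\pl M_2})$ and intertwining $s_{g_1}$ with $s_{g_2}$. Using this identification, the hitting times $t^+_{g_i}$ and the hitting scattering maps $s_{g_i}$ may all be regarded as living on a common space $\pl_-SM\cup\pl_0SM$, on which $s_{g_1}=s_{g_2}$, and moreover $g_1|_{T\pl M}=g_2|_{T\pl M}$.

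Next I would invoke the theorem of Croke and Wen \cite{Croke:2015qy}: since $\pl M_1$ has only finitely many switch points for $g_1$, the equality $s_{g_1}=s_{g_2}$ forces the hitting times to coincide, $t^+_{g_1}=t^+_{g_2}$. Hence $(M_1,g_1)$ and $(M_2,g_2)$ share the same hitting lens data $(t^+_{g_1},s_{g_1})=(t^+_{g_2},s_{g_2})$. Because both metrics are non-trapping and agree on $T\pl M$, Proposition~\ref{equivalencescat} upgrades this to an equality of the exit lens data, $(\tau^+_{g_1},\sigma_{g_1})=(\tau^+_{g_2},\sigma_{g_2})$, i.e.\ $(M_1,g_1)$ and $(M_2,g_2)$ have the same lens data in the sense of the Introduction. (Depending on the precise formulation of \cite{Croke:2015qy}, its conclusion may be phrased through the boundary distance function, in which case Propositions~\ref{equivalencescat} and~\ref{betadetS} are the tools that mediate between boundary distance, hitting lens data and exit lens data.)

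Finally I would apply Theorem~\ref{th2bis}, all of whose hypotheses are now in place: $(M_1,g_1)$ and $(M_2,g_2)$ are non-trapping, oriented, compact Riemannian surfaces with boundary, without conjugate points, and with the same lens data. It yields a diffeomorphism $\psi:M_1\to M_2$ with $\psi^*g_2=g_1$, which is precisely the assertion of the corollary.

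The main obstacle is not internal to this paper: it is the Croke--Wen input, namely the implication ``scattering map $\Rightarrow$ hitting time'' under the finiteness-of-switch-points condition, which carries the genuinely new geometric content and whose proof lies outside the present scope. The remaining steps are a routine matching of hypotheses, the only delicate point being the bookkeeping of the boundary identifications, which is handled exactly as in the proof of Proposition~\ref{equivalencescat}.
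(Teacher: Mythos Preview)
Your proposal is correct and follows essentially the same approach as the paper: invoke Croke--Wen to upgrade equality of hitting scattering maps to equality of hitting times, use Proposition~\ref{equivalencescat} (with Proposition~\ref{betadetS} as an alternative bridge if needed) to pass to the exit lens data, and then apply Theorem~\ref{th2bis}. The paper's own argument is just the one-sentence sketch in the paragraph preceding the corollary, and your write-up is a faithful expansion of it.
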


\bibliography{_biblio}
\bibliographystyle{amsalpha}

\end{document}